\let\oldtocsection=\tocsection
\let\oldtocsubsection=\tocsubsection
\let\oldtocsubsubsection=\tocsubsubsection
\renewcommand{\tocsection}[2]{\hspace{0em}\oldtocsection{#1}{#2}}
\renewcommand{\tocsubsection}[2]{\hspace{1em}\oldtocsubsection{#1}{#2}}
\renewcommand{\tocsubsubsection}[2]{\hspace{2em}\oldtocsubsubsection{#1}{#2}}
\newtheorem{thm}{Theorem}[section]
\newtheorem{cor}[thm]{Corollary}
\newtheorem{prop}[thm]{Proposition}
\newtheorem{lem}[thm]{Lemma}
\newtheorem{quest*}{Question}
\newtheorem{prob*}{Problem}
\theoremstyle{definition}
\theoremstyle{remark}
\numberwithin{equation}{section}
\newcounter{notation}
\DeclareUrlCommand\DOI{}
\crefname{figure}{Figure}{Figures}
\theoremstyle{plain}
\newtheorem*{thm*}{Theorem}
\crefname{thm}{Theorem}{Theorems}
\crefname{cor}{Corollary}{Corollarys}
\newtheorem*{cor*}{Corollary}
\crefname{cor*}{Corollary}{Corollarys}
\crefname{lem}{Lemma}{Lemmas}
\crefname{prop}{Proposition}{Propositions}
\crefname{conj}{Conjecture}{Conjectures}
\newtheorem*{conj*}{Conjecture}
\crefname{conj*}{Conjecture}{Conjectures}
\crefname{defn}{Definition}{Definitions}
\theoremstyle{remark}
\newtheorem*{remark}{Remark}
\newtheorem*{remarks}{Remarks}
\newtheorem*{rem*}{Remark}
\def\addsymbol #1: #2#3{$#1$ \> \parbox{5.4in}{#2 \dotfill \pageref{#3}}\\} 
\def\addsymbolEND #1: #2#3{$#1$ \> \parbox{5.4in}{#2 \dotfill \pageref{#3}}}
\newcommand{\ds}{\displaystyle}
\renewcommand{\bar}{\overline}
\newcommand{\kd}{\mathfrak{d}}
\newcommand{\kf}{\mathfrak{f}}
\newcommand{\cL}{\mathcal{L}}
\renewcommand{\pmod}[1]{\, (\mathrm{mod} {\, #1})}
\newcommand{\kn}{\mathfrak{n}}
\newcommand{\N}{\mathrm{N}}
\newcommand{\kp}{\mathfrak{p}}
\newcommand{\cO}{\mathcal{O}}
\newcommand{\ord}{\mathrm{ord}\,}
\newcommand{\kq}{\mathfrak{q}}
\newcommand{\Q}{\mathbb{Q}}
\newcommand{\R}{\mathbb{R}}
\renewcommand{\Re}{\mathrm{Re}}
\def\Res{\mathop{\mathrm{Res}}}
\newcommand{\Z}{\mathbb{Z}}
\newcommand{\cZ}{\mathcal{Z}}
\title{Explicit results on the distribution of zeros of Hecke $L$-functions}
\author{Jesse Thorner}
\email{jesse.thorner@gmail.com}
\author{Asif Zaman}
\thanks{The second author was supported in part by an NSERC PGS-D scholarship.} 
\email{asif@math.toronto.edu}
\date{\today}
\begin{document}
%%%%%%%%%%%%%%%%%%%%%%%%%%%%%%%%%%%%%%%%%%%%%%%%%%%%%%%%%%%%%%%%%%%%%%
%%%%%%%%%%%%%%%%%%%%%%%%%%%%%%%%%%%%%%%%%%%%%%%%%%%%%%%%%%%%%%%%%%%%%%

\maketitle
\begin{abstract}
We prove an explicit log-free zero density estimate and an explicit version of the zero-repulsion phenomenon of Deuring and Heilbronn for Hecke $L$-functions.  In forthcoming work of the second author, these estimates will be used to establish explicit bounds on the least norm of a prime ideal in a congruence class group and improve upon existing explicit bounds for the least norm of a prime ideal in the Chebotarev density theorem.
\end{abstract}

\section{Introduction and Statement of Results}

In 1837, Dirichlet proved that if $a,q\in\mathbb{Z}$ and $(a,q)=1$, then there are infinitely many primes $p\equiv a\pmod q$.  In light of this result, it is natural to ask how big is the first such prime, say $P(a,q)$?  Assuming the Generalized Riemann Hypothesis (GRH) for Dirichlet $L$-functions, Lamzouri, Li, and Soundararajan \cite{LLS} proved that if $q\geq4$, then
\begin{equation}
\label{eqn:Linnik_GRH}
P(a,q)\leq(\varphi(q)\log q)^2,
\end{equation}
where $\varphi$ is Euler's totient function.  Nontrivial, unconditional upper bounds are significantly harder to prove.  The first such bound on $P(a,q)$ is due to Linnik \cite{Linnik}, who proved that for some absolute constant $\Cr{c1}>0$, we have that 
\begin{equation}
\label{eqn:Linnik}
P(a,q)\ll q^{\Cl[abcon]{c1}}
\end{equation}
with an absolute implied constant.  Admissible values of $\Cr{c1}$ are now known explicitly, with the current record being $\Cr{c1}=5.2$ due to Xylouris \cite{Xylouris}. For a detailed history, see Section 1 of Heath-Brown \cite{HBLinnik} and the sources contained therein.

In order to obtain small values of $\Cr{c1}$, one typically requires three principles; for example, the following explicit forms of these principles are found in \cite[Section 1]{HBLinnik}:
\begin{itemize}
\item A zero-free region for Dirichlet $L$-functions \cite{Chen}:  if $q$ is sufficiently large, then the product $\prod_{\chi\bmod q}L(s,\chi)$ has at most one zero in the region
\begin{equation}
\label{eqn:ZFR_Dirichlet}
s=\sigma+it,\qquad \sigma\geq1-\frac{0.10367}{\log(q(2+|t|))}.
\end{equation}
If such an exceptional zero exists, then it is real and simple and it corresponds with a non-trivial real character $\chi$.
\item A ``log-free'' zero density estimate \cite{Huxley, Jutila}:   If $q$ is sufficiently large, $\epsilon>0$, and we define
\[
N(\sigma,T,\chi)=\#\{\rho=\beta+i\gamma:L(\rho,\chi)=0,|\gamma|\leq T,\beta\geq\sigma\},
\]
then
\begin{equation}
\label{eqn:LFZDE_Dirichlet}
\sum_{\chi\bmod q}N(\sigma,T,\chi)\ll (qT)^{(\frac{12}{5}+\epsilon)(1-\sigma)},\qquad T\geq1,
\end{equation}
where the implied constant depends on $\epsilon$.
\item The zero repulsion phenomenon of Deuring and Heilbronn \cite[Chapter 10]{Graham1}:  if $q$ is sufficiently large, $\lambda>0$ is sufficiently small, $\epsilon>0$, and the exceptional zero in the region \eqref{eqn:ZFR_Dirichlet} exists and equals $1-\lambda/\log q$, then $\prod_{\chi\pmod q}L(s,\chi)$ has no other zeros in the region
\begin{equation}
\label{eqn:DH_Dirichlet}
\sigma\geq1-\frac{(\frac{2}{3}-\epsilon)(\log \lambda^{-1})}{\log(q(2+|t|))}.
\end{equation}
%\asif{Does Graham really prove this for all $t \in \R$? I thought it was for $|t| \leq q$ or something like that. I haven't looked in detail at his thesis yet. I'll see if I can find it later.}  \jesse{I got this from Heath-Brown, and he specified the range of $t$ in neither the zero-free region nor the DH phenomenon.  If a range were left off, I find it hard to believe that the range is that specific.} \asif{Ok, you are correct. The statement I was thinking of is in his paper, not his thesis. That's why I was confused.}
\end{itemize}

Weiss \cite{Weiss} considered a generalization of \eqref{eqn:Linnik} in the context of a general number field.  Let $K/\Q$ be a number field with absolute field norm $\mathrm{N}$ and absolute discriminant $D_K$, and let $\mathfrak{q}$ be an integral ideal of $K$.  One considers the (narrow) ray class group  $I(\mathfrak{q})/P_{\mathfrak{q}}$ where $I(\mathfrak{q})$ is the group of fractional ideals of $K$ which are coprime to $\mathfrak{q}$ and $P_{\mathfrak{q}}$ is the subgroup of principal ideals $(\alpha)$ with $\alpha$ totally positive and $\alpha \equiv 1 \pmod{\kq}$.  Let $H$ be a subgroup of $I(\kq)$ containing $P_{\kq}$; we call any such subgroup a congruence class group of $K$.  Weiss proved that there exist absolute constants $\Cl[abcon]{Weiss_1}>0$ and $\Cl[abcon]{Weiss_2}>0$ such that each coset of $H$ in $I(\kq)$ contains a prime ideal $\kp$ satisfying
\begin{equation}
\label{eqn:Weiss1}
\mathrm{N}\mathfrak{p}\leq 2[K:\Q]^{\Cr{Weiss_1}[K:\Q]}(D_K\mathrm{N}\mathfrak{q})^{\Cr{Weiss_2}}.
\end{equation}
Consequently, each ideal class of $K$ contains a prime ideal $\mathfrak{p}$ satisfying
\[
\mathrm{N}\mathfrak{p}\leq 2[K:\Q]^{\Cr{Weiss_1}[K:\Q]}D_K^{\Cr{Weiss_2}}.
\]
To prove \eqref{eqn:Weiss1}, Weiss proved variants of \eqref{eqn:ZFR_Dirichlet}-\eqref{eqn:DH_Dirichlet} for Hecke $L$-functions with completely effective field uniformity.

An even broader generalization of \eqref{eqn:Linnik} lies in the context of the Chebotarev density theorem.  Let $L/F$ be a Galois extension of number fields with Galois group $G$.  To each unramified prime ideal $\mathfrak{p}$ of $F$, there corresponds a certain conjugacy class of Frobenius automorphisms in $G$ which are attached to the prime ideals of $L$ lying above $\mathfrak{p}$.  We denote this conjugacy class using the Artin symbol $[\frac{L/F}{\mathfrak{p}}]$.  For a fixed conjugacy class $C\subset G$, let
\[
\pi_C(x):=\#\Big\{\mathfrak{p}:\textup{$\mathfrak{p}$ is unramified, $\Big[\frac{L/F}{\mathfrak{p}}\Big]=C$, $\mathrm{N}\mathfrak{p}\leq x$}\Big\},
\]
where $\mathrm{N}=\mathrm{N}_{F/\mathbb{Q}}$ is the absolute norm of $F$.  The Chebotarev density theorem asserts that
\[
\pi_C(x)\sim\frac{|C|}{|G|}\mathrm{Li}(x).
\]
In analogy with \eqref{eqn:Linnik}, it is natural to bound the quantity
\[
P(C,L/F):=\min \Big\{ \mathrm{N}\mathfrak{p} : \textup{$\mathfrak{p}$ is unramified, $\Big[\frac{L/F}{\mathfrak{p}}\Big]=C$, $\mathrm{N}\mathfrak{p}$ is a rational prime} \Big\}. 
\]
Under GRH for Hecke $L$-functions, Bach and Sorenson \cite{BS} proved that
\begin{equation}
\label{eqn:CDT_GRH}
P(C,L/F)\leq (4\log D_L+2.5 [L:\mathbb{Q}]+5)^2.
\end{equation}
We note that if $L=\mathbb{Q}(e^{2\pi i/q})$ for some integer $q\geq3$ and $F=\mathbb{Q}$, then we recover a bound of the same analytic quality as \eqref{eqn:Linnik_GRH}, though the constants are a bit larger.

The first nontrivial, unconditional bound on $P(C,L/F)$ is due to Lagarias, Montgomery, and Odlyzko \cite{LMO}; they proved that for some absolute constant $\Cl[abcon]{c2}>0$, we have that
\begin{equation}
\label{eqn:LMO}
P(C,L/F)\leq 2D_L^{\Cr{c2}}
\end{equation}
Equation \eqref{eqn:LMO} (up to the computation of $\Cr{c2}$) is commensurate with the best known bounds when $L=\mathbb{Q}(\sqrt{D})$ for some fundamental discriminant $D$, $F=\mathbb{Q}$, and $C$ is the nontrivial conjugacy class of $G$, in which case we are measuring the least quadratic nonresidue modulo $D$ (see Burgess \cite{Burgess}).  % In 2012, Kadiri and Ng \cite{KadiriNg} referred to unpublished work which contains an explicit value of $\Cr{c2}$ and, more
Recently, the second author \cite{Zaman_2015c} proved that one may take $\Cr{c2}=40$ for $D_L$ sufficiently large.  We observe, however, that if $L=\mathbb{Q}(e^{2\pi i/q})$ and $F=\mathbb{Q}$, then \eqref{eqn:LMO} is exponential in $q$, which is significantly worse than \eqref{eqn:Linnik}.

To explain how \eqref{eqn:Weiss1} relates to this Chebotarev setting, we must establish some notation.  Let $A$ be any abelian subgroup of $G$ such that $A\cap C$ is nonempty, let $\widehat{A}$ be the character group of $A$, let $\kf_{\chi} = \mathfrak{f}(\chi)$ be the conductor of a character $\chi\in\widehat{A}$, let $\mathrm{Ind}_A^G \chi$ be a character of $\widehat{G}$ induced by $\chi\in\widehat{A}$, and let
\[
\mathcal{Q}=\mathcal{Q}(C,L/F;A):=\max\Big\{D_F^{[L:F]/|A|}\mathrm{N}\mathfrak{f}(\mathrm{Ind}_A^G\chi):\textup{$\chi\in\widehat{A}$ irreducible}\Big\}.
\]
Using the fundamental theorem of class field theory, Deuring's trick \cite{Deuring}, and \eqref{eqn:Weiss1}, Weiss \cite[Theorem 6.1]{Weiss} proved that for certain absolute constants $\Cl[abcon]{c3}>0$ and $\Cl[abcon]{c4}>0$, 
\begin{equation}
\label{eqn:Weiss}
P(C,L/F)\leq 2[L:\mathbb{Q}]^{\Cr{c3}[L:\mathbb{Q}]/|A|}\mathcal{Q}^{\Cr{c4}}.
\end{equation}
When $A$ is cyclic, we have from the conductor-discriminant formula that
\[
D_L^{1/|A|}\leq\mathcal{Q}\leq D_L^{1/\varphi(|A|)}.
\]
(See \cite[Chapter 5, Section 3]{Weiss_Thesis} for a proof of the upper bound.)  Thus Weiss proves a bound on $P(C,L/F)$ which provides a ``continuous transition'' from \eqref{eqn:Linnik} to \eqref{eqn:LMO} with the potential to create significant savings over \eqref{eqn:LMO} when $G$ has a large abelian subgroup which intersects $C$.  In particular, if $L$ is a cyclotomic extension of $F = \mathbb{Q}$, then \eqref{eqn:Linnik} and \eqref{eqn:Weiss} are equivalent.

%\jesse{If $L$ has a large root discriminant, the power of $[L:\Q]^{[L:\Q]/|A|}$ factor can be absorbed into the factor depending on $D_L$ in \eqref{eqn:Weiss}; in this case, \eqref{eqn:Weiss} noticeably improves \eqref{eqn:LMO}.  On the other hand, \eqref{eqn:Weiss} does not improve \eqref{eqn:LMO} when considering an infinite family of field extensions with small root discriminant.  One such family is naturally provided by Golod and Shafarevich \cite{GS}, who proved that, given any prime $p$, there are infinitely many fields $K_0$ such that the $p$-class tower of $K_0$ (the tower formed by successively taking the extension corresponding to the $p$-part of the class group) is infinite.  By looking at intermediate fields if necessary, we therefore obtain towers $K_0 \subseteq K_1 \subseteq K_2 \subseteq \dots$, where each $K_{n+1}/K_n$ is an unramified Galois extension with $\mathrm{Gal}(K_{n+1}/K_n) \simeq \mathbb{Z}/p\mathbb{Z}$.  Weiss's result \eqref{eqn:Weiss} thus implies that for each $n$ and each $\sigma\in\mathrm{Gal}(K_{n+1}/K_n)$, there is a prime $\mathfrak{p}$ of $K_n$ satisfying $[\frac{K_{n+1}/K}{\kp}]=\sigma$ and
%\[
%\mathrm{N}\mathfrak{p} \leq (2p)^{C_{K_0} np^{n}},
%\]
%where $C_{K_0}$ is a constant depending on $K_0$ satisfying $C_{K_0} \geq \Cr{c3} [K_0:\mathbb{Q}]\log[K_0:\mathbb{Q}]$, where $\Cr{c3}$ is as in \eqref{eqn:Weiss}.}

The fundamental difference between \eqref{eqn:LMO} and \eqref{eqn:Weiss} is that the proof of \eqref{eqn:LMO} does not take full advantage of the factorization of the Dedekind zeta function $\zeta_L(s)$ of $L$ into a product of Hecke $L$-functions; this choice affords one the opportunity to use more elementary tools.  The proof of \eqref{eqn:Weiss1}, and hence the proof of \eqref{eqn:Weiss},  takes advantage of the factorization of $\zeta_L(s)$, which requires the use of a log-free zero density estimate as in Linnik's original work.

Our goal in this paper is to prove explicit versions of Weiss' field-uniform variants of \eqref{eqn:LFZDE_Dirichlet} and \eqref{eqn:DH_Dirichlet}.  In a forthcoming paper, the second author \cite{Zaman3} will employ these explicit results to make $\Cr{Weiss_1}, \Cr{Weiss_2}, \Cr{c3}$, and $\Cr{c4}$ explicit.  We note that Fogels \cite{Fogels} was the first to prove variants of Principles 2 and 3 for Hecke characters, though his proof did not maintain the necessary field uniformity.  Weiss' results rely critically on his field-uniform variants of Fogels' work, but Weiss' results are not explicit.

%We first prove an explicit version of Weiss' density estimate in \cref{sec:LFZD_Proof}.  For $0 \leq \sigma \leq 1, T \geq 1,$ and a Hecke character $\chi$, define
%\[
%N(\sigma,T,\chi) := \#\{ \rho = \beta+i\gamma : L(\rho, \chi) = 0, \sigma < \beta < 1, |\gamma| \leq T \}
%\]
%where the nontrivial zeros $\rho$ of $L(s,\chi)$ are counted with multiplicity.

%\begin{thm}
%\label{LFZD-MainTheorem}
%Let $H \pmod{\kq}$ be a congruence class group of $K$; that is, let $H$ be a subgroup of $I(\kq)$ containing $P_{\kq}$. Let $n_K=[K:\Q]$ and
%\begin{equation}
%	Q = Q_H := \max\{ \N\kf_{\chi} : \chi \pmod{\kq} \text{ satisfying } \chi(H) = 1\}.
%	\label{def:Q}
%\end{equation}
%If $\tfrac{1}{2} \leq \sigma < 1$ and $T \geq \max\{  n_K D_K^{-2/n_K} Q^{-3/5n_K}, 1\}$, then 
%\[
%\sum_{\chi(H) = 1} N(\sigma, T,\chi) \ll \{ e^{O(n_K)} D_K^{2} Q T^{n_K}\}^{84(1-\sigma)} 
%\]
%with an absolute and computable implied constant. If $1- 10^{-2} \leq\sigma<1$, then %we may replace 84 with 74.
%\end{thm}
In \cref{sec:LFZD_Proof}, we prove an explicit version of Weiss' variant of \eqref{eqn:LFZDE_Dirichlet} for Hecke characters \cite[Corollary 4.4]{Weiss}.  To state Weiss' result, we first introduce some notation.  Let $H \pmod{\kq}$ be a congruence class group of $K$ (that is, $H$ is a subgroup of $I(\kq)$ containing $P_{\kq}$), let $n_K=[K:\Q]$, and let $h_H=[I(\kq):H]$.  Define
\begin{equation}
	Q = Q_H := \max\{ \N\kf_{\chi} : \chi \pmod{\kq} \text{ satisfying } \chi(H) = 1\},
	\label{def:Q}
\end{equation}
and
\[
N(\sigma,T,\chi) := \#\{ \rho = \beta+i\gamma : L(\rho, \chi) = 0, \sigma < \beta < 1, |\gamma| \leq T \}
\]
where the nontrivial zeros $\rho$ of $L(s,\chi)$ are counted with multiplicity.  Weiss \cite[Corollary 4.4]{Weiss} proved that there exists an absolute constant $\Cl[abcon]{Weiss2}>0$ such that if $\tfrac{1}{2} \leq \sigma < 1$ and $T \geq n_K^2 h_H^{1/n_K}$, then
\begin{equation}
\sum_{\substack{\chi\pmod{\kq}\\ \chi(H) = 1}} N(\sigma, T,\chi) \ll (e^{O(n_K)}D_K^2 Q T^{n_K})^{\Cr{Weiss2}(1-\sigma)}
\label{eqn:WeissDensity}
\end{equation}
with an absolute and computable implied constant. The first main result of this paper exhibits an explicit value of $c_7$.

\begin{thm}
\label{LFZD-MainTheorem}
Let $H \pmod{\kq}$ be a congruence class group of $K$. Let $n_K=[K:\Q]$ and $Q$ be as in \eqref{def:Q}.  If $\tfrac{1}{2} \leq \sigma < 1$ and $T \geq \max\{  n_K D_K^{-2/n_K} Q^{-3/5n_K}, 1\}$, then 
\begin{equation}
\sum_{\substack{\chi\pmod{\kq} \\ \chi(H) = 1}} N(\sigma, T,\chi) \ll \{ e^{O(n_K)} D_K^{2} Q T^{n_K}\}^{81(1-\sigma)} 
\label{eqn:LFZD-MainTheorem}
\end{equation}
where all implied constants are absolute and computable. If $1- 10^{-3} \leq\sigma<1$, then one may replace 81 with 74.
\end{thm}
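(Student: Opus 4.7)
The plan is to follow the Linnik--Jutila framework for log-free zero density estimates, adapted to Hecke $L$-functions with fully explicit, field-uniform bounds in the style of Fogels and Weiss, while tightening the numerical constants enough to obtain the exponent $81$. There are three main ingredients: a mollifier-based zero-detection identity, a hybrid large-sieve inequality for Hecke characters of $K$, and a careful optimization of free parameters.

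First I would set up a zero detector. For each character $\chi \pmod{\kq}$ with $\chi(H)=1$ and each nontrivial zero $\rho=\beta+i\gamma$ with $\beta>\sigma$ and $|\gamma|\leq T$, multiply $L(s,\chi)$ by a truncated inverse
\[
M_\chi(s) = \sum_{\N\kn \leq y} \frac{\mu_K(\kn)\chi(\kn)}{(\N\kn)^s},
\]
so that $L(s,\chi)M_\chi(s) = 1 + \sum_{\N\kn>y} a_\chi(\kn)(\N\kn)^{-s}$. Convolving with a kernel of Mellin type (for instance $\Gamma(s)X^s$, or a smoother variant for the near-$\sigma=1$ regime) and shifting the contour past $\rho$ produces a pointwise inequality asserting that a Dirichlet polynomial $P_\chi(\rho,X)$ supported on ideals of norm $\asymp X$ satisfies $|P_\chi(\rho,X)| \geq c$ for some fixed constant. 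With parameters $X, y$ tuned in terms of $D_K, Q, T, n_K$, one also obtains a longer ``second detector'' by raising to a power $k$, which amplifies the detection without destroying the Dirichlet-polynomial structure.

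Next I would sum over the characters trivial on $H$ and invoke an explicit hybrid large sieve for Hecke characters: for any sequence $\{b(\kn)\}$ supported on ideals of norm at most $M$,
\[
\sum_{\substack{\chi\pmod{\kq} \\ \chi(H)=1}} \int_{-T}^{T} \Big|\sum_{\N\kn\leq M} b(\kn)\chi(\kn)(\N\kn)^{-it}\Big|^2 dt \ll \bigl(h_H T^{n_K} D_K^{O(1)} Q + M\bigr)\sum|b(\kn)|^2.
\]
This is Weiss's field-uniform mean-value estimate, which one obtains by combining Rademacher's convexity bound for $L(s,\chi)$ in the critical strip with the conductor-discriminant formula and explicit archimedean control of the gamma factors. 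The number of detected zeros is then at most the $L^2$-norm of the $k$-th power detector divided by $c^{2k}$, giving a bound of the shape $(h_H T^{n_K} D_K^{O(1)} Q + X^k)^{A} \cdot c^{-2k}$ for suitable $A$.

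Finally, one balances $X$, $y$, and $k$ so that both terms in the large sieve contribute equally, producing a total bound $(e^{O(n_K)}D_K^2 Q T^{n_K})^{81(1-\sigma)}$. The hypothesis $T\geq n_K D_K^{-2/n_K}Q^{-3/5n_K}$ is precisely the threshold ensuring that $X$ can be chosen $\geq 1$ and that the archimedean factors coming from the contour shift stay dominated by $\log(D_K^2 Q T^{n_K})$. The improvement from $81$ to $74$ when $\sigma\geq 1-10^{-3}$ follows from enlarging $y$ (permissible because $1-\sigma$ is so small that the large-sieve diagonal term stays controlled) and choosing a sharper smoothing kernel, which removes a numerical loss in the convolution step. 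The main obstacle is precisely this terminal optimization: every intermediate estimate -- the upper bounds for $L(s,\chi)$ on the $1$-line, the explicit constants in Rademacher's bound, the contour-shift error terms, and the large-sieve constants -- must be tracked with uniform explicit dependence on $n_K, D_K, Q, T$, and a suboptimal choice at any one of these stages produces a materially worse exponent than $81$.
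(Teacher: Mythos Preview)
Your proposal has a genuine gap at its very first step: the M\"obius-based mollifier
\[
M_\chi(s) = \sum_{\N\kn \leq y} \frac{\mu_K(\kn)\chi(\kn)}{(\N\kn)^s}
\]
is precisely the device the paper avoids, and for a reason that is stated explicitly in the introduction. Getting a \emph{log-free} estimate out of the mollified product $L(s,\chi)M_\chi(s)-1$ requires cancellation of Gallagher type (well-spacing of Farey fractions, Gauss-sum manipulations), and the number-field analogues of those inputs with uniform dependence on $D_K$ are not known. What happens in practice is that the M\"obius cancellation imports a factor of $D_K$ into the \emph{implied constant} rather than into the exponent, and that is fatal for \eqref{eqn:LFZD-MainTheorem}. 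The paper therefore detects zeros with the Tur\'an power-sum method (\cref{prop:ZeroDetector}, via the Kolesnik--Straus inequality), which produces a lower bound for a high derivative of $L'/L(s,\chi)$ near a zero and converts it into a short Dirichlet polynomial over \emph{prime} ideals only; no mollifier and no $\mu_K$ appear.

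Your large-sieve step has a related problem. The inequality you write down does not follow from Rademacher's convexity bound; convexity controls $|L(s,\chi)|$, not mean squares of Dirichlet polynomials over a family of characters. The paper's mean-value theorem (\cref{thm:LargeSieve}) is proved with the Selberg sieve, not by duality, and is specifically tailored to polynomials supported on prime ideals of norm exceeding a threshold $y$; the $(\log y)^{-1}$ saving comes from the sieve, not from character orthogonality. The lower bound $T\geq n_K D_K^{-2/n_K}Q^{-3/5n_K}$ is exactly the condition that makes the choice $y=e^{2.3\phi\cL}$ in \cref{prop:ZeroDetector} large enough to satisfy the hypothesis \eqref{eqn:LargeSieve_yRange} of \cref{thm:LargeSieve}; it has nothing to do with archimedean contour errors. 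Finally, the sharpening from $81$ to $74$ in the range $\sigma\geq 1-10^{-3}$ is not obtained by enlarging $y$ or changing the kernel: it comes from taking the parameter $\epsilon$ in the Tur\'an detector down to $10^{-3}$ (instead of $0.05$), which reduces the constant $\phi=1+\tfrac{4}{\pi}\epsilon+16\epsilon^2$ governing the exponent $73.2\phi$ in \cref{prop:ZeroDetector}.
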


\begin{remarks} $ $ 
\begin{itemize}
	\item \cref{LFZD-MainTheorem} also contains a noticeable improvement over Weiss' density estimate \eqref{eqn:WeissDensity} in the range of $T$. One would expect in many applications that the number field $K$ satisfies $n_K^{n_K} \ll D_K^2 Q^{3/5}$, in which case \cref{LFZD-MainTheorem} holds for $T \geq 1$. Even for arbitrary $K$, this will result in appreciable numerical savings in the computation of $\Cr{Weiss_1}$, $\Cr{Weiss_2}$, $\Cr{c3}$, and $\Cr{c4}$ in \cite{Zaman3} instead of simply following Weiss' original arguments \cite[Sections 5-6]{Weiss}.
	\item The appearance of $e^{O(n_K)}$ in \eqref{eqn:LFZD-MainTheorem} may seem unusual for an explicit result but it is always a negligible term. If $n_K = o(\log D_KQ)$ for a certain family of number fields $K$ then
		\[
		e^{O(n_K)} D_K^{2} Q T^{n_K} = D_K^{2+o(1)} Q^{1+o(1)} T^{n_K}
		\]
		so we may ignore the contribution of $e^{O(n_K)}$. Recall a classical bound of Minkowski implies $n_K = O(\log D_K)$ so the above scenario is often the case. Otherwise, if $n_K \gg \log (D_KQ)$ then \eqref{eqn:LFZD-MainTheorem} holds for $T \gg n_K$ in which case 
		\[
		e^{O(n_K)} D_K^{2} Q T^{n_K} = T^{\{1+o(1)\}n_K}
		\]
		so we may again ignore $e^{O(n_K)}$. 
\end{itemize}
\end{remarks} 

%\asif{The new improvement in the range of $T$ will obviously help when $n_K^{n_K} \ll D_K^2 Q^{3/5}$. If you prefer simplicity of the statement, we could write $T \geq n_K$ and in a short remark immediately after Theorem 1.1, say the proof actually shows this for the range $T \geq \max\{ \dots \}$. I'm perfectly happy either way --  max in the theorem statement or not.} \jesse{Let's keep the max then.  This is a big selling point.}

We prove Theorem \ref{LFZD-MainTheorem} by constructing a Dirichlet polynomial which is bounded away from zero when in close proximity to a nontrivial zero of a Hecke $L$-function.  This is ensured by using the Tur\'an power sum method (cf. \cref{prop:ZeroDetector}).  The contributions from the detected zeros are summed efficiently using a large sieve inequality for Hecke characters (cf. \cref{thm:LargeSieve}).  In order to maintain desirable field uniformity in our large sieve inequality, we use the Selberg sieve instead of the usual duality arguments; see \cref{sec:MeanValue} for a more detailed discussion.

In order to bound sums over integral ideals, we are required to smooth the sums using a kernel which is ${n_K}$-times differentiable, where $n_K=[K:\Q]$.  Unfortunately, the smoothing introduces the powers of ${n_K}^{{n_K}}$  (see the comments immediately preceding \cite[Section 1]{Weiss}).  We note that if ${n_K}$ is small in comparison to $\log D_K / \log\log D_K$ (i.e., if the root discriminant of $K$ is large), then the powers of ${n_K}^{{n_K}}$ may be safely absorbed into the powers of $D_K$.  On the other hand, if ${n_K}$ is large in comparison to $\log D_K / \log\log D_K$ (i.e., if the root discriminant of $K$ is small), then ${n_K}^{{n_K}}$ dominates $D_K$; this rare situation happens, for example, when considering the infinite $p$-class tower extensions studied by Golod and \v{S}afarevi\v{c} \cite{GS}.

We also note that in the case of bounding the least prime in an arithmetic progression, Tur\'an's power sum method does not produce the strongest numerical results.  Instead, one typically constructs a suitable mollifier for Dirichlet $L$-functions relies on cancellation arising from the M{\"o}bius function.  However, relying on M{\"o}bius cancellation for Hecke $L$-functions introduces dependence on $D_K$ in the implied constant of Theorem \ref{LFZD-MainTheorem}, which is catastrophic for bounds for the least prime ideal in a congruence class.  To the authors' knowledge, the only device by which one can detect zeros to prove a log-free zero density estimate while maintaining suitable field uniformity is the Tur\'an power sum.  (The Tur\'an power sum method was recently used by Lemke Oliver and the first author \cite{RJLO_JT} to prove an effective log-free zero density estimate for Rankin-Selberg $L$-functions.  Since uniformity in certain parameters was crucial for applications, the Tur\'an power sum method was used there as well.)

In \cref{sec:DH_proof}, we prove an explicit variant of the zero repulsion phenomenon of Deuring and Heilbronn for Hecke $L$-functions.

\begin{thm} \label{DH-MainTheorem} Let $\psi \pmod{\kq}$ be a real Hecke character  and suppose $L(s,\psi)$ has a real zero $\beta_1$. Let $T \geq 1$ be given, and $\chi \pmod{\kq}$ be an arbitrary Hecke character and let $\rho' = \beta'+i\gamma'$ be a zero of $L(s,\chi)$ satisfying 
\begin{equation}
\frac{1}{2} \leq \beta' < 1, \qquad |\gamma'| \leq T. 
\label{DH-MainTheorem_BetaRange}
\end{equation}
Then 
\[
\beta' \leq 1 - \frac{ \log\Big( \dfrac{c}{(1-\beta_1) \log(D_K \cdot \N\kq \cdot (T+20)^{n_K} \cdot e^{n_K} ) } \Big) }{a_1  \log D_K + a_2 \log \N\kq  + a_3 n_K \log(T+20)  + a_4 n_K + 10}
\]
for some absolute, computable constant $c > 0$ and 
\[
(a_1,a_2,a_3,a_4) = 
\begin{cases}
(51, 54, 26, 74)  & \text{if $\psi$ is quadratic}, \\
(26, 13, 13, 37) & \text{if $\psi$ is trivial.}
\end{cases}
\]
\end{thm}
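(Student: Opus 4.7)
The plan is to follow the classical Deuring--Heilbronn strategy in the form worked out by Linnik and later adapted by Weiss \cite{Weiss} for Hecke $L$-functions, but with \cref{LFZD-MainTheorem} as an input to keep the whole argument log-free and to track all constants explicitly. The inputs are the exceptional zero $\beta_1$ of $L(s,\psi)$ close to $1$ and the putative zero $\rho' = \beta'+i\gamma'$ of $L(s,\chi)$; the goal is to turn their simultaneous existence into a quantitative repulsion via positivity.

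First I would form a positive combination of Hecke $L$-functions. When $\psi$ is a real nontrivial Hecke character, the logarithmic derivative of
\[
F(s) = \zeta_K(s) \cdot L(s,\psi) \cdot L(s,\chi) \cdot L(s,\bar\chi) \cdot L(s,\chi\psi) \cdot L(s,\bar\chi\psi)
\]
has nonnegative Dirichlet coefficients, since $\bigl(1+\psi(\kn)\bigr)\bigl(1+\Re\chi(\kn)\bigr) \geq 0$ for every integral ideal $\kn$. When $\psi$ is trivial, $F$ reduces to $\zeta_K(s) L(s,\chi) L(s,\bar\chi)$, which is the structural reason for the smaller constants $(a_1,a_2,a_3,a_4) = (26,13,13,37)$. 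Next I would pass to the zero side by evaluating $\frac{1}{2\pi i}\int_{(c)} (-F'/F)(s+s_0) \, k(s)\,ds$ via contour shift, where $s_0 = 1+i\gamma'$ and $k(s)$ is a compactly tailored test function (for instance a smoothed power of the form $x^s/s^{m+1}$ with $m$ of order $n_K$, so that one can integrate past the gamma factors). On the Dirichlet side, the nonnegativity above yields $\Re I \geq 0$ directly. On the zero side, shifting the contour past $\Re(s)=0$ produces a weighted sum $\sum_\rho \mathcal K(\rho)$ over the zeros of $F$. I would arrange $k$ so that $\Re\mathcal K(\rho')$ has a definite positive size, $\Re\mathcal K(\beta_1)$ contributes a large negative term of order $x^{\beta_1 - 1}/(1-\beta_1)$, and $|\mathcal K(\rho)|$ is bounded by $x^{\beta-1}$ times something mild for every other zero.

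Then I would combine the two sides. The bulk contribution is the pairing between the $\beta_1$ term and $\rho'$; the remaining zeros are handled by breaking them into dyadic strips in $(1-\Re\rho)$ and height $|\Im\rho|$, and summing the bound $x^{\beta - 1}$ against the counts supplied by \cref{LFZD-MainTheorem}. This is precisely the step where the log-free feature is crucial: the $81(1-\sigma)$ (or $74(1-\sigma)$) exponent keeps the geometric-series sum finite and proportional to $x^{\beta'-1}$ rather than $x^{\beta'-1}\log x$. The resulting inequality has the schematic form
\[
\frac{x^{\beta_1-1}}{1-\beta_1}\ \lesssim\ x^{\beta'-1}\bigl(a_1\log D_K + a_2\log\N\kq + a_3 n_K\log(T+20) + a_4 n_K + 10\bigr) + \text{negligible},
\]
and optimizing $x$ as a suitable power of the combined analytic conductor $\mathcal C = D_K\cdot\N\kq\cdot(T+20)^{n_K}\cdot e^{n_K}$ and solving for $1-\beta'$ produces the advertised $\log\!\bigl(c/((1-\beta_1)\log\mathcal C)\bigr)$ in the numerator.

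The main obstacle will be the explicit bookkeeping in assembling the denominator. Each of the six (respectively three) factors of $F$ contributes separately to the coefficients of $\log D_K$, $\log\N\kq$, and $n_K\log(T+20)$, and these contributions are magnified by the density exponent $81$ or $74$ from \cref{LFZD-MainTheorem}. Extracting the precise values $(51,54,26,74)$ and $(26,13,13,37)$ demands a careful choice of smoothing for $k$ (so that no additional $\log\mathcal C$ factors leak in at the contour shifts), a careful accounting of the archimedean Gamma factors in the completed $L$-functions, and a clean split of the zero sum into the exceptional pair $\{\beta_1,\rho'\}$ versus the rest; the asymmetry between $\log D_K$ and $\log\N\kq$ in particular traces back to the way $\psi$ contributes a second copy of the discriminant but not a second copy of the modulus. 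Everything else is a routine but bookkeeping-heavy sequence of estimates once the test function $k$ and the decomposition of $F$ are fixed.
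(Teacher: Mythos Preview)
Your proposed route is genuinely different from the paper's, and the difference matters for the numerics. The paper does \emph{not} feed \cref{LFZD-MainTheorem} into the proof of \cref{DH-MainTheorem}; the two theorems are established independently. Instead, the paper follows the Lagarias--Montgomery--Odlyzko power sum strategy (\cref{LMO-PowerSum}): one sets $z_n = (\sigma+i\gamma'-\omega)^{-2}$ as $\omega$ runs over all zeros (trivial and nontrivial) of $\zeta_K$, $L(s,\psi^*)$, $L(s,\chi^*)$, $L(s,\psi^*\chi^*)$; the positivity identity together with \cref{ExplicitFormula_HigherDerivatives} gives $\Re\sum_n z_n^m \ll m(1-\beta_1)\alpha^{-2m-1}$, while \cref{LMO-PowerSum} supplies an $m_0 \leq (12+\epsilon)M$ with $\Re\sum_n z_n^{m_0} \gg (\alpha+1-\beta')^{-2m_0}$. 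The ``other zeros'' are never isolated and summed against a density estimate; they are absorbed wholesale into $M = |z_1|^{-1}\sum_n|z_n|$, which is bounded directly by \cref{DH-TrivialZeroSum,DH-ZeroSum}. The constants $(51,54,26,74)$ then drop out as $(24+2\epsilon)$ times the coefficients in the bound for $M/\alpha$ at the choice $\alpha=18$; they have no connection to the density exponents $81$ or $74$ in \cref{LFZD-MainTheorem}.

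Your contour-integral-plus-density approach could in principle produce \emph{a} Deuring--Heilbronn statement, but not with these constants: bounding $\sum_\rho x^{\beta-1}$ by partial summation against $N(\sigma) \ll X^{81(1-\sigma)}$ forces $\log x \gtrsim 81\log X$ for convergence, and that $81$ would propagate into the denominator in place of the much smaller $24$ coming from the power sum. The power sum wins here precisely because the geometric decay in $m$ handles the remaining zeros for free, with the constant $12$ of \cref{LMO-PowerSum} (doubled because the $z_n$ are squares) playing the role your density exponent would play. Separately, your schematic inequality is oriented backwards: as $\beta_1\to 1$ your left side $x^{\beta_1-1}/(1-\beta_1)$ blows up while the right side stays bounded, so it cannot be the comparison you want; the correct shape is $x^{\beta'-1} \ll (1-\beta_1)\cdot(\text{denominator})$, which after taking logarithms and choosing $\log x$ comparable to the denominator yields the stated lower bound on $1-\beta'$.
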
  
\begin{remarks} $ $
\begin{itemize}
	\item 	 Let $\epsilon>0$.  If we consider a sequence of number fields $K$ in which $n_K = o(\log D_K \N\kq)$ then one may take
\[
(a_1,a_2,a_3,a_4) =
\begin{cases}
(48+\epsilon, 48+\epsilon, 24+\epsilon, 0)  & \text{if $\psi$ is quadratic}, \\
(24+\epsilon, 12+\epsilon, 12+\epsilon, 0) & \text{if $\psi$ is trivial}
\end{cases}
\]
when $D_K \N\kq$ is sufficiently large in terms of $\epsilon$. (See the remark at the end of \cref{DH-MainTheorem_Quadratic} for details.) 	
	\item One may take $\kq$ to be the least common multiple of the conductors $\kf_{\chi}$ and $\kf_{\psi}$. 
\end{itemize}

\end{remarks}

% END

The proof of \cref{DH-MainTheorem} is inspired by \cite[Theorem 5.1]{LMO} and its quantitative counterpart \cite[Theorem 1.2]{Zaman_2015c}. Namely, we apply a power sum inequality and carefully estimate various sums over zeros of Hecke $L$-functions. Other quantitative versions of Deuring-Heilbronn phenomenon have been established by Kadiri and Ng \cite{KadiriNg} and the second author \cite{Zaman_2015c} for the zeros of the Dedekind zeta function and by the second author \cite{Zaman_2015a}  for the zeros of Hecke $L$-functions. The results found in \cite{KadiriNg,Zaman_2015a} use completely different methods than those used here and have much better explicit constants but, instead of assuming \eqref{DH-MainTheorem_BetaRange}, one must restrict to an asymptotically smaller range of $\beta'$ and $|\gamma'| \leq 1$. In other words, the key difference between \cref{DH-MainTheorem} and the aforementioned results is the wide range of validity given by \eqref{DH-MainTheorem_BetaRange}. Consequently, if the real character $\psi$ has a real zero $\beta_1$ exceptionally close to $1$ (often referred to as a Siegel zero), then \cref{DH-MainTheorem} allows one to take full advantage of the repulsion effect.

The paper proceeds as follows.  In \cref{HeckeLFunctions}, we introduce the relevant notation and  conventions, review some standard results in the theory of Hecke $L$-functions, prove some explicit estimates involving Hecke $L$-functions, and bound some standard arithmetic sums over integral ideals of $K$.  In \cref{sec:LFZD_Proof}, we prove \cref{LFZD-MainTheorem} under the assumption of \cref{thm:LargeSieve} (which we prove in \cref{sec:MeanValue}) and \cref{prop:ZeroDetector} (which we prove in \cref{sec:ZeroDetector}).  In \cref{sec:DH_proof}, we prove \cref{DH-MainTheorem}.

\subsection*{Acknowlegements}
The authors thank John Friedlander and Robert Lemke Oliver for their comments and suggestions.  The first author conducted work on this paper while visiting Centre de Recherches Math\'ematiques (hosted by Andrew Granville, Chantal David, and Dimitris Koukoulopoulos) and Stanford University (hosted by Kannan Soundararajan and Robert Lemke Oliver); he is grateful to these departments and hosts for providing a rich and productive work environment.

%%%%%%%%%%%%%%%%%%%%%%%%%%%%%%%%%%%%%%%%%%%%%%%%%%%%%%%%%%%%%%%%%%%%%%
%%%%%%%%%%%%%%%%%%%%%%%%%%%%%%%%%%%%%%%%%%%%%%%%%%%%%%%%%%%%%%%%%%%%%%
%%%%%%%%%%%%%%%%%%%%%%%%%%%%%%%%%%%%%%%%%%%%%%%%%%%%%%%%%%%%%%%%%%%%%%
%%%%%%%%%%%%%%%%%%%%%%%%%%%%%%%%%%%%%%%%%%%%%%%%%%%%%%%%%%%%%%%%%%%%%%
\section{Auxiliary Estimates}
\label{HeckeLFunctions} 
 %%%%%%%%%%%%%%%%%%%%%%%%%%%%%%%%%%%%%%%%%%%%%%%%%%%%%%%%%%%%%%%%%%%%%%

%%%%%%%%%%%%%%%%%%%%%%%%%%%%%%%%%%%%%%%%%%%%%%%%%%%%%%%%%%%%%%%%%%%%%%
\subsection{Notation} 
\label{subsec:notation}
We will use the following notation throughout the paper:
\begin{itemize}
	\item $K$ is a number field.
	\item $\mathcal{O}_K$ is the ring of integers of $K$.
	\item $n_K = [K:\mathbb{Q}]$ is the degree of $K/\mathbb{Q}$.
	\item $D_K$ is the absolute value of the discriminant of $K$.
	\item $\N = \N^K_{\Q}$ is the absolute field norm of $K$. 
	\item $\zeta_K(s)$ is the Dedekind zeta function of $K$.
	\item $\mathfrak{q}$ is an integral ideal of $K$.
	\item $\mathrm{Cl}(\mathfrak{q}) = I(\mathfrak{q})/P_{\mathfrak{q}}$ is the narrow ray class group of $K$ modulo $\mathfrak{q}$.
	\item $\chi$, or $\chi \pmod{\kq}$, is a character of $\mathrm{Cl}(\kq)$, referred to as a Hecke character or ray class character of $K$.
	\item $\delta(\chi)$ is the indicator function of the trivial character.
	\item $\kf_{\chi}$ is the conductor of $\chi$; that is, it is the maximal integral ideal such that $\chi$ is induced from a primitive character $\chi^* \pmod{\kf_{\chi}}$.
	\item $D_{\chi} = D_K \N\kf_{\chi}$.
	\item $L(s,\chi)$ is the Hecke $L$-function associated to $\chi$.
	\item $H$, or $H \pmod{\kq}$, is a subgroup of $\mathrm{Cl}(\mathfrak{q})$, or equivalently of $I(\mathfrak{q})$ containing $P_{\mathfrak{q}}$. The group $H$ is referred to as a congruence class group of $K$. 
 	\item $Q = Q_H = \max\{ \N\kf_{\chi} : \chi \pmod{\kq} \text{ satisfying } \chi(H) = 1\}$ is the maximum analytic conductor of $H$. 
	\item $\kf_H = \mathrm{lcm}\{ \kf_{\chi} : \chi \pmod{\kq} \text{ satisfying } \chi(H) = 1\}$ is the conductor of $H$.
	\item $H^* \pmod{\kf_H}$ is the ``primitive" congruence class group inducing $H$.
	\item $h_H = [I(\kq):H]$.   
\end{itemize}
We also adhere to the convention that all implied constants in all asymptotic inequalities $f\ll g$ or $f=O(g)$ are absolute with respect to $K$.  If an implied constant depends on a field-independent parameter, such as $\epsilon$, then we use $\ll_{\epsilon}$ and $O_{\epsilon}$ to denote that the implied constant depends at most on $\epsilon$.  All implied constants will be effectively computable.

%%%%%%%%%%%%%%%%%%%%%%%%%%%%%%%%%%%%%%%%%%%%%%%%%%%%%%%%%%%%%%%%%%%%%%

\subsection{Hecke $L$-functions}
For a more detailed reference on Hecke $L$-functions, see \cite{LO,Heilbronn} for example. Strictly speaking, a Hecke character $\chi$ is a function on $\mathrm{Cl}(\kq)$ but, by pulling back the domain of $\chi$ and extending it by zero, we regard $\chi$ as a function on integral ideals of $K$.  We will use this convention throughout the paper. 

For the entirety of this section, assume that $\chi$ is primitive. The \emph{Hecke $L$-function of $\chi$}, denoted $L(s, \chi)$, is defined as 
\begin{equation}
L(s,\chi) = \sum_{\kn} \chi(\kn) \N\kn^{-s} = \prod_{\kp} \Big(1-\frac{\chi(\kp)}{\N\kp^{s} } \Big)^{-1}
\label{def:Hecke_L-fcn}
\end{equation}
for $\Re\{s\} > 1$ where the sum is over integral ideals $\kn$ of $K$ and the product is over prime ideals $\kp$ of $K$. Define the \emph{completed Hecke $L$-function} $\xi(s, \chi)$ by
\begin{equation}
\xi(s, \chi) = \big[ s(s-1) \big]^{\delta(\chi)} D_{\chi}^{s/2} \gamma_{\chi}(s) L(s, \chi),
\label{def:CompletedHecke}
\end{equation}
where $D_{\chi} = D_K \N\kf_{\chi}$, $\delta(\chi)$ equals $1$ if $\chi$ is trivial and $0$ otherwise, and $\gamma_{\chi}(s)$ is the \emph{gamma factor of $\chi$} defined by
\begin{equation}
\gamma_{\chi}(s) =  \Big[ \pi^{-\tfrac{s}{2}} \Gamma\Big(\frac{s}{2}\Big)  \Big]^{a(\chi)} \cdot \Big[ \pi^{-\tfrac{s+1}{2} } \Gamma\Big( \frac{s+1}{2} \Big)   \Big]^{b(\chi)}.
\label{GammaFactor} 
\end{equation}
Here $a(\chi)$ and $b(\chi)$ are certain non-negative integers satisfying 
\begin{equation}
a(\chi) + b(\chi) = n_K. 
\label{GammaFactor_Exponents}
\end{equation}
It is a classical fact that $\xi(s, \chi)$ is entire of order 1 and satisfies the functional equation
\begin{equation}
\xi(s, \chi) = w(\chi) \xi(1-s, \bar{\chi})
\label{FunctionalEquation}
\end{equation}
where $w(\chi) \in \mathbb{C}$ is the \emph{root number} of $\chi$ satisfying $|w(\chi)| = 1$. 
The zeros of $\xi(s,\chi)$ are the \emph{non-trivial zeros $\rho$} of $L(s,\chi)$, which satisfy $0 < \Re\{\rho\} < 1$.  The \emph{trivial zeros $\omega$} of $L(s, \chi)$ are given by 
\begin{equation}
\mathop{\ord}_{s = \omega} L(s, \chi) = 
\begin{cases}
a(\chi) - \delta(\chi) & \text{if } \omega = 0, \\
b(\chi) &  \text{if } \omega = -1,-3,-5,\dots\\
a(\chi) & \text{if } \omega = -2,-4,-6, \dots
\end{cases}
\label{TrivialZeros}
\end{equation}
and arise as poles of the gamma factor of $L(s,\chi)$. 

Since $\xi(s,\chi)$ is entire of order 1, it admits a Hadamard product factorization given by
\begin{equation}
\label{eqn:Hadamard}
\xi(s,\chi)=e^{A(\chi)+B(\chi)s}\prod_{\rho }\left(1-\frac{s}{\rho}\right)e^{s/\rho}.
\end{equation}
The zeros $\rho$ of $\xi(s,\chi)$ are the non-trivial zeros of $L(s,\chi)$ and are known to satisfy $0 < \Re\{\rho\} < 1$. We now collect some standard results on $L(s,\chi)$ which follow from Theorems 5.6 and Proposition 5.7 of \cite{IK}.

\begin{lem} \label{ExplicitFormula} Let $\chi$ be a primitive Hecke character. Then
\[
- \Re\Big\{\frac{L'}{L}(s,\chi)\Big\} = \frac{1}{2} \log D_{\chi} +\Re\Big\{\frac{\delta(\chi)}{s-1}+ \frac{\delta(\chi)}{s}\Big\} -  \sum_{\rho }\Re\Big\{ \frac{1}{s-\rho}\Big\}  + \Re\Big\{\frac{\gamma_{\chi}'}{\gamma_{\chi}}(s)\Big\}.
\]
where the sum is over all non-trivial zeros $\rho$ of $L(s,\chi)$. 
\end{lem}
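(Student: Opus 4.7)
The plan is to compare two expressions for the completed $L$-function $\xi(s,\chi)$, namely the Hadamard product \eqref{eqn:Hadamard} and the factorization \eqref{def:CompletedHecke} in terms of the gamma factor. Taking the logarithmic derivative of each yields two formulas for $\xi'/\xi(s,\chi)$, which I would equate and solve for $L'/L(s,\chi)$; taking real parts and invoking the functional equation \eqref{FunctionalEquation} to eliminate the Hadamard constant $B(\chi)$ would then produce the stated identity.

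Explicitly, logarithmically differentiating \eqref{def:CompletedHecke} gives
\[
\frac{\xi'}{\xi}(s,\chi) = \delta(\chi)\left(\frac{1}{s-1} + \frac{1}{s}\right) + \frac{1}{2}\log D_{\chi} + \frac{\gamma_{\chi}'}{\gamma_{\chi}}(s) + \frac{L'}{L}(s,\chi),
\]
while \eqref{eqn:Hadamard} yields
\[
\frac{\xi'}{\xi}(s,\chi) = B(\chi) + \sum_{\rho}\left(\frac{1}{s-\rho} + \frac{1}{\rho}\right).
\]
Equating the two and solving for $-L'/L(s,\chi)$, then taking real parts, reduces the lemma to the identity
\[
\Re\, B(\chi) = -\sum_{\rho}\Re\frac{1}{\rho},
\]
so that the $1/\rho$ term in the Hadamard sum cancels with $\Re\, B(\chi)$.

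This identity is the one nontrivial input. To establish it, I would use the functional equation \eqref{FunctionalEquation}. Writing the Hadamard factorization for both $\xi(s,\chi)$ and $\xi(1-s,\bar{\chi})$ and differentiating logarithmically at $s=0$ (or equivalently matching the constant terms in the expansions at $s=0$ and $s=1$), one obtains
\[
B(\chi) + B(\bar{\chi}) = -\sum_{\rho}\left(\frac{1}{\rho} + \frac{1}{1-\rho}\right),
\]
where the zeros of $L(s,\bar{\chi})$ are paired with the zeros of $L(s,\chi)$ via $\rho\mapsto 1-\bar{\rho}$. Since the zeros of $L(s,\bar\chi)$ are the conjugates of those of $L(s,\chi)$, the right-hand side equals $-2\sum_{\rho}\Re(1/\rho)$, and since $B(\chi) + B(\bar\chi) = 2\Re\, B(\chi)$ (this also follows from matching the Hadamard product with its conjugate representation), the identity drops out.

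The only real obstacle is managing the conditional convergence of $\sum_{\rho}1/\rho$: it is the convergence factor $e^{s/\rho}$ in \eqref{eqn:Hadamard} that makes $\sum_{\rho}(1/(s-\rho) + 1/\rho)$ absolutely convergent off the zero set, so the manipulations above must be carried out with the paired sums rather than the individual series $\sum 1/\rho$. Once this is handled, the proof is a direct comparison of logarithmic derivatives and the formula follows immediately.
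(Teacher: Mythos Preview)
Your proposal is correct and is precisely the standard derivation of this explicit formula; the paper itself does not give a proof but simply cites \cite[Lemma 5.1]{LO}, and the argument there (as in \cite[Theorem 5.6, Proposition 5.7]{IK}) is exactly the one you outline. The only point worth tightening is that you should avoid evaluating at $s=0$ (where $\xi$ may vanish when $\chi$ is trivial) and instead deduce $B(\chi)+B(\bar\chi)=-\sum_\rho(1/\rho+1/(1-\rho))$ directly from the identity $\xi'/\xi(s,\chi)=-\xi'/\xi(1-s,\bar\chi)$ for generic $s$, as you in fact go on to do.
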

\begin{proof} See \cite[Lemma 5.1]{LO} for example. 
\end{proof}
By similar arguments, there exists an explicit formula for higher derivatives of $-\frac{L'}{L}(s,\chi)$. 
\begin{lem} \label{ExplicitFormula_HigherDerivatives}
Let $\chi$ be a Hecke character (not necessarily primitive) and $k \geq 1$ be a positive integer. Then
\begin{align*}
\frac{(-1)^{k+1}}{k!} \frac{d^k}{ds^k} \frac{L'}{L}(s,\chi) & = 
\frac{1}{k!} \sum_{\kp} \sum_{m=1}^{\infty} (\log \N\kp) (\log \N\kp^m)^k \chi(\kp) (\N\kp)^{-ms}  \\
&  = \frac{\delta(\chi)}{(s-1)^{k+1}}  -  \sum_{\omega} \frac{1}{(s-\omega)^{k+1}}
\end{align*}
for $\Re\{s\} > 1$, where the first sum is over prime ideals $\kp$ of $K$ and the second sum is over all zeros $\omega$ of $L(s,\chi)$, including trivial ones, counted with multiplicity. 
\end{lem}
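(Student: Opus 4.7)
The plan is to prove the two equalities separately: the Dirichlet-series identity by termwise differentiation of the Euler product, and the sum-over-zeros identity by differentiating the unsymmetrised explicit formula obtained from the Hadamard factorization \eqref{eqn:Hadamard}. For the first identity, starting from $\log L(s,\chi) = \sum_\kp \sum_{m\geq 1} \chi(\kp^m)\N\kp^{-ms}/m$, which is valid for $\Re\{s\}>1$ by \eqref{def:Hecke_L-fcn}, I would differentiate once to get
\[
-\frac{L'}{L}(s,\chi) = \sum_\kp \sum_{m\geq 1} \chi(\kp^m)(\log \N\kp)\N\kp^{-ms},
\]
and then differentiate $k$ more times using $\tfrac{d^k}{ds^k}\N\kp^{-ms} = (-\log\N\kp^m)^k \N\kp^{-ms}$. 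Absolute convergence in $\Re\{s\}>1$ justifies swapping sum and derivative, and multiplying by $(-1)^{k+1}/k!$ yields the first displayed expression.

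For the sum-over-zeros identity I would first assume $\chi$ is primitive. Taking the logarithmic derivative of \eqref{def:CompletedHecke} and applying the Hadamard factorization \eqref{eqn:Hadamard} gives
\[
\frac{L'}{L}(s,\chi) = B(\chi) + \sum_{\rho}\Big(\frac{1}{s-\rho}+\frac{1}{\rho}\Big) - \frac{1}{2}\log D_\chi - \frac{\gamma_\chi'}{\gamma_\chi}(s) - \delta(\chi)\Big(\frac{1}{s} + \frac{1}{s-1}\Big).
\]
After $k \geq 1$ differentiations the constants $B(\chi)$, $1/\rho$, $\tfrac{1}{2}\log D_\chi$ vanish and each nontrivial-zero term becomes $(-1)^k k!(s-\rho)^{-(k+1)}$. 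For the gamma factor I would apply the partial-fraction expansion of the digamma function to \eqref{GammaFactor} together with \eqref{GammaFactor_Exponents} to obtain
\[
\frac{d^k}{ds^k}\frac{\gamma_\chi'}{\gamma_\chi}(s) = (-1)^{k+1}k!\Big[a(\chi)\sum_{n\geq 0}\frac{1}{(s+2n)^{k+1}} + b(\chi)\sum_{n\geq 0}\frac{1}{(s+2n+1)^{k+1}}\Big],
\]
which, in view of the multiplicity rules in \eqref{TrivialZeros}, equals $(-1)^{k+1}k!\big[\sum_{\omega\ \mathrm{trivial}}(s-\omega)^{-(k+1)} + \delta(\chi)s^{-(k+1)}\big]$. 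The extra $\delta(\chi)/s^{k+1}$ encodes the fact that the trivial zero at $\omega=0$ has multiplicity $a(\chi)-\delta(\chi)$ rather than $a(\chi)$, and it cancels exactly against the $\delta(\chi)/s$ pole contribution from the factor $[s(s-1)]^{\delta(\chi)}$ in \eqref{def:CompletedHecke}. After multiplying through by $(-1)^{k+1}/k!$, the trivial and nontrivial zero sums merge into $\sum_\omega$, and only $\delta(\chi)/(s-1)^{k+1}$ survives, giving the claimed identity for primitive $\chi$.

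To drop the primitivity assumption I would factor $L(s,\chi) = L(s,\chi^*)\prod_{\kp\mid\kq,\ \kp\nmid\kf_\chi}(1-\chi^*(\kp)\N\kp^{-s})$, where $\chi^*$ is the primitive character inducing $\chi$. Each removed Euler factor contributes exactly the expected extra Dirichlet terms to $-L'/L$ (preserving the first identity) and introduces simple zeros on $\Re\{s\}=0$ which are precisely the additional zeros of $L(s,\chi)$ relative to $L(s,\chi^*)$; absorbing these into $\sum_\omega$ preserves the second identity. I expect the main technical obstacle to be bookkeeping: matching the digamma partial fractions against the trivial-zero multiplicities in \eqref{TrivialZeros}, and in particular reconciling the off-by-$\delta(\chi)$ discrepancy at $\omega=0$ against the $[s(s-1)]^{\delta(\chi)}$ factor in the definition of $\xi(s,\chi)$.
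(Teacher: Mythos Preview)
Your argument is correct, but the paper takes a shorter route for the second identity. Rather than working with the Hadamard product \eqref{eqn:Hadamard} for $\xi(s,\chi)$ and then separately expanding $\gamma_\chi'/\gamma_\chi$ via the digamma partial fractions, the paper observes that $(s-1)^{\delta(\chi)}L(s,\chi)$ is itself entire of order $1$ and hence admits a Hadamard product
\[
(s-1)^{\delta(\chi)}L(s,\chi)=s^r e^{m_1+m_2 s}\prod_{\omega\neq 0}\Big(1-\frac{s}{\omega}\Big)e^{s/\omega},
\]
where now the product runs over \emph{all} zeros of $L(s,\chi)$, trivial and nontrivial alike, and $r=\mathrm{ord}_{s=0}L(s,\chi)$. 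Taking the logarithmic derivative and differentiating $k$ times then gives the claimed sum over all $\omega$ in one stroke, with no need to match gamma-factor poles against the multiplicity table \eqref{TrivialZeros} or to track the $\delta(\chi)$ offset at $\omega=0$. This also handles imprimitive $\chi$ automatically, since the finite Euler product $\prod_{\kp\mid\kq,\,\kp\nmid\kf_\chi}(1-\chi^*(\kp)\N\kp^{-s})$ is entire of finite order and its zeros are simply absorbed into the product over $\omega$; your separate reduction step is therefore unnecessary. Your approach has the virtue of being more transparent about \emph{which} zeros are trivial and where they come from, but the paper's method sidesteps exactly the bookkeeping you flagged as the main obstacle.
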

\begin{proof}  
Using the Hadamard product \eqref{eqn:Hadamard} of $\xi(s,\chi)$, it follows that
\[
(s-1)^{\delta(\chi)} L(s,\chi) = s^r e^{m_1 +m_2s} \prod_{\omega \neq 0} \Big(1-\frac{s}{\omega} \Big) e^{s/\omega}
\] 
where $m_1,m_2$ are constants depending on $\chi$, the product is over all zeros $\omega \neq 0$ of $L(s,\chi)$, including trivial ones, and $r = \ds\mathop{\ord}_{s= 0} L(s,\chi)$. 
 Taking the logarithmic derivative of both sides yields
\[
- \frac{L'}{L}(s,\chi) = \frac{\delta(\chi)}{s-1} - m_2  - \sum_{\omega \neq 0} \Big( \frac{1}{s-\omega} + \frac{1}{\omega} \Big)  - \frac{r}{s}. 
\]
On the other hand, the Euler product of $L(s,\chi)$ implies
\[
- \frac{L'}{L}(s,\chi) = \sum_{\kp} \sum_{m=1}^{\infty} (\log \N\kp)   \chi(\kp) (\N\kp)^{-ms} \quad \text{for $\Re\{s\} > 1$}. 
\]
Differentiating $k$ times both of these formulas for $ - \frac{L'}{L}(s,\chi)$  and multiplying by $(-1)^k/k!$ yields the desired result. Note that the final sum over zeros $\omega$ of $L(s,\chi)$ includes $\omega = 0$, if it exists. 
\end{proof}

\subsection{Explicit $L$-function estimates}

In order to obtain explicit results, we must have explicit bounds on a few important quantities.  First, we record a bound for $L(s,\chi)$ in the critical strip $0 < \Re\{s\} < 1$ via a Phragmen-Lindel\"{o}f type convexity estimate due to Rademacher. 

\begin{lem}[Rademacher \cite{Rademacher}]
\label{Rademacher}
Let $\chi$ be a primitive Hecke character and $\eta \in (0, 1/2]$. Then for $s = \sigma+it$,
\[
|L(s,\chi)| \ll \Big| \frac{1+s}{1-s}\Big|^{\delta(\chi)} \zeta_{\Q}(1+\eta)^{n_K} \Big(  \frac{ D_{\chi} }{ (2\pi)^{n_K}} (3+|t|)^{n_K} \Big)^{ (1+\eta-\sigma)/2} 
\]
uniformly in the strip $-\eta \leq \sigma \leq 1+\eta$. 
\end{lem}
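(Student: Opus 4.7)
The plan is to apply the Phragmen–Lindelöf convexity principle to a suitably normalized version of $L(s,\chi)$. Set
\[
F(s) := \left(\frac{s-1}{s+1}\right)^{\delta(\chi)} L(s,\chi),
\]
which is entire and, by the Hadamard product for $\xi(s,\chi)$ (see \eqref{eqn:Hadamard}) together with the standard bounds on its order, of finite order in the strip $-\eta \leq \Re s \leq 1+\eta$. I would then bound $F$ on the two vertical boundaries and interpolate.

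On the right boundary $\sigma = 1+\eta$, the Dirichlet series defining $L(s,\chi)$ converges absolutely, and a termwise Euler product comparison gives
\[
|L(1+\eta+it,\chi)| \leq \zeta_K(1+\eta) \leq \zeta_{\Q}(1+\eta)^{n_K};
\]
since $|(s-1)/(s+1)| \leq 1$ on this line, the same bound holds for $F$. On the left boundary $\sigma = -\eta$, I would invoke the functional equation \eqref{FunctionalEquation}, where the polynomial prefactors $[s(s-1)]^{\delta(\chi)}$ on the two sides of $\xi(s,\chi)=w(\chi)\xi(1-s,\bar{\chi})$ cancel exactly, yielding
\[
L(s,\chi) = w(\chi)\, D_{\chi}^{(1-2s)/2}\, \frac{\gamma_{\bar{\chi}}(1-s)}{\gamma_{\chi}(s)}\, L(1-s,\bar{\chi}).
\]
On $\sigma=-\eta$ the shifted point $1-s$ lies on the absolute convergence boundary, so $|L(1-s,\bar{\chi})| \leq \zeta_{\Q}(1+\eta)^{n_K}$, while $|D_\chi^{(1-2s)/2}| = D_\chi^{(1+2\eta)/2}$. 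A uniform Stirling estimate applied to each of the $n_K$ gamma factors in \eqref{GammaFactor} gives
\[
\left|\frac{\gamma_{\bar{\chi}}(1-s)}{\gamma_{\chi}(s)}\right| \ll \left(\frac{3+|t|}{2\pi}\right)^{n_K (1+2\eta)/2},
\]
uniformly in $t$, and the normalizing factor $(s-1)/(s+1)$ is bounded on $\sigma=-\eta$. Altogether,
\[
|F(-\eta+it)| \ll \zeta_{\Q}(1+\eta)^{n_K} \left(\frac{D_{\chi}(3+|t|)^{n_K}}{(2\pi)^{n_K}}\right)^{(1+2\eta)/2}.
\]

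Finally, I would apply Phragmen–Lindelöf to $F$ in the strip $-\eta \leq \sigma \leq 1+\eta$ with the two boundary majorants above; the standard convex interpolation yields
\[
|F(\sigma+it)| \ll \zeta_{\Q}(1+\eta)^{n_K} \left(\frac{D_{\chi}(3+|t|)^{n_K}}{(2\pi)^{n_K}}\right)^{(1+\eta-\sigma)/2},
\]
and dividing back by $((s-1)/(s+1))^{\delta(\chi)}$ recovers the claimed bound. The only genuinely delicate step is maintaining uniformity in $t$ when invoking Stirling, especially for small $|t|$; this is handled by using a version of Stirling with remainder valid away from the poles of the gamma factors at $s=0,-1,-2,\dots$, which are separated from the line $\sigma=-\eta$ for $\eta\in(0,1/2]$. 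The role of the prefactor $((s-1)/(s+1))^{\delta(\chi)}$ is precisely to kill the simple pole of $\zeta_K(s)$ at $s=1$ when $\chi$ is trivial so that $F$ is entire and Phragmen–Lindelöf applies; reinstating this factor explains the $|(1+s)/(1-s)|^{\delta(\chi)}$ in the statement.
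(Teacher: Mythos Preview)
The paper does not give a proof of this lemma; it is simply quoted with attribution to Rademacher. Your sketch is correct and is essentially the argument in Rademacher's original paper: bound on the two vertical lines via absolute convergence and the functional equation, then interpolate. The one point you gloss over is that the naive Phragm\'en--Lindel\"of theorem requires $t$-independent bounds on the boundary lines, whereas your left-edge bound grows like $(3+|t|)^{n_K(1+2\eta)/2}$; Rademacher's contribution was precisely a version of the three-lines lemma allowing majorants of the shape $C|Q+s|^{\alpha}$ on each edge, and this is what delivers the stated exponent $(1+\eta-\sigma)/2$ with the $(3+|t|)^{n_K}$ inside the base. In practice one multiplies $F(s)$ by a suitable power of $(s+B)^{-1}$ to flatten the growth before applying the classical principle and then undoes the multiplication; your write-up would be complete once this is made explicit.
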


Next, we record an explicit bound on the digamma function and $\frac{\gamma_{\chi}'}{\gamma_\chi}(s)$. 
\begin{lem}
\label{digamma}
Let $s = \sigma+it$ with $\sigma > 1$ and $t \in \R$. Then
\[
\Re\left\{ \frac{\Gamma'}{\Gamma}(s)\right\}\leq \log|s| + \sigma^{-1}
\]
and, for any Hecke character $\chi$, 
\[
\Re\left\{\frac{\gamma_{\chi}'}{\gamma_{\chi}}(s)\right\}\leq \frac{n_K}{2}\left(\log(|s|+1) + \sigma^{-1} -\log \pi\right).
\]
\end{lem}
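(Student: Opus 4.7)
The plan is to prove the digamma bound first and then derive the gamma-factor bound by logarithmic differentiation.

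For the first inequality, I would start from Binet's second integral formula,
\[
\frac{\Gamma'}{\Gamma}(s) = \log s - \frac{1}{2s} - 2\int_0^\infty \frac{u\,du}{(s^2+u^2)(e^{2\pi u}-1)},
\]
valid for $\Re(s)>0$. Taking real parts, $\Re \log s = \log|s|$ and $\Re(-1/(2s)) = -\sigma/(2|s|^2) \le 0$, so both contribute in the desired direction or are benign. To control the integral, I would factor $s^2+u^2 = (s+iu)(s-iu)$ to get
\[
|s^2+u^2| = \sqrt{\sigma^2+(\tau+u)^2}\,\sqrt{\sigma^2+(\tau-u)^2} \;\ge\; \sigma^2,
\]
where $s=\sigma+i\tau$, and then bound the real part of the integral by $\sigma^{-2}\int_0^\infty u(e^{2\pi u}-1)^{-1}\,du = \tfrac{1}{24\sigma^2}$. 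This actually yields the sharper bound $\Re\psi(s) \le \log|s| + \tfrac{1}{12\sigma^2}$, which for $\sigma>1$ is comfortably dominated by $\log|s|+\sigma^{-1}$.

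For the gamma-factor bound, logarithmic differentiation of the definition of $\gamma_\chi(s)$ and the identity $a(\chi)+b(\chi)=n_K$ yield
\[
\frac{\gamma_\chi'}{\gamma_\chi}(s) = -\frac{n_K\log\pi}{2} + \frac{a(\chi)}{2}\frac{\Gamma'}{\Gamma}(s/2) + \frac{b(\chi)}{2}\frac{\Gamma'}{\Gamma}\Big(\frac{s+1}{2}\Big).
\]
I would then apply the refined digamma bound $\Re\psi(w) \le \log|w|+\tfrac{1}{12\Re(w)^2}$ at $w=s/2$ and $w=(s+1)/2$. Since $|s/2|$ and $|(s+1)/2|$ are both at most $(|s|+1)/2$, the logarithmic contributions combine to $\tfrac{n_K}{2}\log\tfrac{|s|+1}{2}$, while the correction terms sum to at most $\tfrac{n_K}{6\sigma^2}$. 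Combining with the $-\tfrac{n_K\log\pi}{2}$ prefactor and noting that the extra $-\tfrac{n_K\log 2}{2}$ is absorbed into the available slack, one obtains the claimed inequality.

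The main obstacle is mostly bookkeeping: a naive application of the first inequality at $w=s/2$ would produce a residual of $2\sigma^{-1}$ rather than $\sigma^{-1}$ in the second bound, so one must use the quantitatively stronger form $\tfrac{1}{12\Re(w)^2}$ of the digamma correction (which is essentially free from the integral estimate above) to comfortably absorb the factor of $2$ that arises from $\Re(s/2)=\sigma/2$. Everything else is routine.
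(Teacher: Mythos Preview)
Your proposal is correct and is in fact more detailed than the paper's own proof, which simply cites \cite[Lemma~4]{OS} for the first inequality and asserts that the second is a ``straightforward consequence of the first combined with the definition of $\gamma_\chi(s)$.'' Your route via Binet's second integral formula is self-contained and yields the sharper bound $\Re\,\psi(s)\le \log|s|+\tfrac{1}{12\sigma^2}$ for $\Re(s)>0$, from which the stated inequality follows at once for $\sigma>1$.

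Your observation about the second inequality is worth highlighting: a literal application of the \emph{stated} first bound at $w=s/2$ produces a residual of $2\sigma^{-1}$, and absorbing the excess using the saved $-\tfrac12\log 2$ only succeeds when $\sigma\ge 1/\log 2\approx 1.44$. So the paper's phrase ``straightforward consequence of the first'' is slightly imprecise unless one reads ``the first'' as the actual argument behind it (which, as you note, already gives the $\tfrac{1}{12\sigma^2}$ refinement for free). Your use of the refined correction $\tfrac{1}{12\,(\Re w)^2}$ cleanly closes this gap for all $\sigma>1$, with the $-\tfrac12\log 2$ providing additional slack that you do not even need.
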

\begin{proof} The first estimate follows from \cite[Lemma 4]{OS}. The second estimate is a straightforward consequence of the first combined with the definition of $\gamma_{\chi}(s)$ in \eqref{GammaFactor}. 
\end{proof}

Next, we establish some bounds on the number of zeros of $L(s,\chi)$ in a circle. 

\begin{lem} \label{ZerosInCircle-Classical}
Let $\chi$ be a Hecke character. Let $s=\sigma+it$ with $\sigma > 1$ and $t \in \R$. For $r > 0$, denote
\[
N_{\chi}(r; s) :=\#\{\rho=\beta+i\gamma: 0 < \beta < 1, L(\rho, \chi)=0,|s-\rho|\leq r\},
\]
then, for $0 < r \leq 1$, 
\[
N_{\chi}(r; s) \leq \{ 4\log D_K + 2 \log \N \kf_{\chi} +2n_K\log(|t|+3) + 2n_K + 4 + 4\delta(\chi) \} \cdot r + 4 + 4\delta(\chi).
\]
\end{lem}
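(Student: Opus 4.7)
The plan is to apply the explicit formula of Lemma~\ref{ExplicitFormula} at the shifted auxiliary point $s_0 = 1 + r + it$, chosen so that every zero in the disk $|s-\rho| \le r$ contributes at least $1/(2r)$ to $\sum_\rho \Re\{1/(s_0-\rho)\}$, and then to estimate each remaining ingredient of the explicit formula against the quantities appearing in the claimed bound.

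I begin with two reductions.  First, one may assume $\chi$ is primitive: $L(s,\chi)$ differs from $L(s,\chi^*)$ only by finitely many Euler factors at primes $\kp\mid\kq$ with $\kp\nmid \kf_\chi$, whose zeros lie on $\Re(s)=0$ and are excluded by the condition $0<\beta<1$.  Second, if $\sigma > 1+r$, then $\beta<1$ forces $\sigma-\beta > r$ for every nontrivial zero, so the disk is empty and the inequality is trivial; assume therefore $1 < \sigma \le 1+r$.  Setting $s_0 = 1+r+it$, the condition $\beta<1$ gives $\sigma_0-\beta > r$, and combining this with $(t-\gamma)^2 \le r^2-(\sigma-\beta)^2$ and expanding $(1+r-\beta)^2-(\sigma-\beta)^2 = (1+r-\sigma)(1+r+\sigma-2\beta)$ produces, after a short algebraic manipulation, $|s_0-\rho|^2 \le 2r(\sigma_0-\beta)$.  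Hence $\Re\{1/(s_0-\rho)\} \ge 1/(2r)$, and summing gives
\[
N_\chi(r;s) \;\le\; 2r \sum_\rho \Re\Big\{\frac{1}{s_0-\rho}\Big\}.
\]

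Now Lemma~\ref{ExplicitFormula} at $s_0$ expresses this sum as $\tfrac12\log D_\chi + \delta(\chi)\Re\{(s_0-1)^{-1}+s_0^{-1}\} + \Re\{\gamma_\chi'/\gamma_\chi(s_0)\} + \Re\{L'/L(s_0,\chi)\}$.  The conductor term yields the main $\tfrac12\log D_K + \tfrac12\log \N\kf_\chi$; Lemma~\ref{digamma} together with the elementary bound $|s_0|+1 \le |t|+3$ controls the digamma contribution by $(n_K/2)\log(|t|+3) + O(n_K)$; and the pole contributions are at most $\delta(\chi)/r + \delta(\chi)$, producing $2\delta(\chi)+2r\delta(\chi)$ after multiplication by the outer factor $2r$.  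The principal difficulty is estimating $\Re\{L'/L(s_0,\chi)\}$, since $\sigma_0 = 1+r$ lies near the pole at $s=1$ of the Dirichlet-series upper bound $-\zeta_K'/\zeta_K(\sigma_0) = 1/r - \gamma_K + O(r)$.  When $\chi$ is trivial, the pole of $\delta(\chi)/(s_0-1)$ cancels this pole exactly, leaving only the Euler--Kronecker constant $\gamma_K = O(\log D_K + n_K)$.  For nontrivial $\chi$ the crude bound $|L'/L(s_0,\chi)| \le -\zeta_K'/\zeta_K(1+r)$ converts, after multiplication by $2r$, into an additive $O(1)$ plus an $O(r\log D_K)+O(rn_K)$ term.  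The asymmetry that the Euler--Kronecker constant depends on $D_K$ but not on $\N\kf_\chi$ is precisely what produces the coefficient gap between $4r\log D_K$ and $2r\log\N\kf_\chi$ in the final bound; assembling the estimates and verifying that the residual $\delta(\chi)$-dependent and bounded Dirichlet-series values fit inside the additive $4+4\delta(\chi)$ completes the plan.
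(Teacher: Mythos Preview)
Your geometric setup is correct and in fact sharper than the paper's: the inequality $|s_0-\rho|^2 \le 2r(\sigma_0-\beta)$ (equivalent to $(1-\beta)^2 \le (\sigma-\beta)^2$, which holds since $\sigma>1>\beta$) yields $N_\chi(r;s)\le 2r\sum_\rho \Re\{1/(s_0-\rho)\}$, whereas the paper uses the cruder containment $N_\chi(r;s)\le N_\chi(2r;s_0)\le 4r\sum_\rho \Re\{1/(s_0-\rho)\}$.  So your route would actually prove the lemma with all coefficients halved.

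The gap is in your treatment of $\Re\{L'/L(s_0,\chi)\}$.  Writing $-\zeta_K'/\zeta_K(1+r) = 1/r - \gamma_K + O(r)$ is not usable here: the $O(r)$ carries a $K$-dependent implied constant (it comes from the higher Laurent coefficients of $\zeta_K$ at $s=1$), so the bound ceases to be explicit and uniform in the field.  Moreover, your ``pole cancellation'' for trivial $\chi$ only holds at $t=0$; for general $t$ one has $\Re\{1/(s_0-1)\}=r/(r^2+t^2)$, which does not offset the potential size $\sim 1/r$ of $\Re\{\zeta_K'/\zeta_K(s_0)\}$.  The paper avoids both issues by applying Lemma~\ref{ExplicitFormula} a second time, now to $\zeta_K$ at the real point $1+r$, and discarding the nonnegative zero sum: this gives
\[
-\tfrac{\zeta_K'}{\zeta_K}(1+r)\;\le\;\tfrac12\log D_K + \tfrac{1}{r} + \tfrac{1}{1+r} + \tfrac{\gamma_{\chi_0}'}{\gamma_{\chi_0}}(1+r),
\]
with the digamma term handled by Lemma~\ref{digamma}.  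This produces the extra $\tfrac12\log D_K$ (whence the asymmetry you noticed between the $D_K$ and $\N\kf_\chi$ coefficients), the $1/r$ that becomes the additive constant after multiplying by the outer $r$-factor, and no hidden $K$-dependence.  Replace your Euler--Kronecker appeal by this second application of the explicit formula and your argument goes through---in fact with better constants than stated.
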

\begin{proof}
	Observe
\[
N_{\chi}(r; s)  \leq N_{\chi}(r; 1+it) \leq N_{\chi}(2r; 1+r+it) 
\]
so it suffices to bound the latter quantity. 
Now, if $s_0=1+r+it$, notice
\[
N_{\chi}(2r; s_0)
	\leq 4r\sum_{|1+it - \rho|\leq 2r}\Re\left\{ \frac{1}{s_0-\rho}\right\} 
	\leq 4r\sum_{\rho}\Re\left\{ \frac{1}{s_0-\rho}\right\}.
\]
Applying \cref{ExplicitFormula} and \cref{digamma} twice and noting $\Re\left\{\frac{L'}{L}(s_0, \chi)\right\} \leq  -\frac{\zeta_K'}{\zeta_K}(1+r)$, the above is
\begin{align*}
&\leq 4r \left(\Re\left\{\frac{L'}{L}(s_0, \chi)\right\}+\frac{1}{2}\log D_{\chi}+\Re\left\{\frac{\gamma_{\chi}'}{\gamma_{\chi}}(s_0)\right\}+\delta(\chi)\Re\left\{\frac{1}{s_0}+\frac{1}{s_0-1}\right\}\right)\\
&\leq 4r \left(-\frac{\zeta_K'}{\zeta_K}(1+r) + \frac{1}{2}\log D_{\chi}  + \frac{n_K}{2} \log(|s_0|+1)  +  \delta(\chi)(1+r^{-1}) \right) \\
&\leq 4r \left(\frac{1}{2}\log(D_{K} D_{\chi})   + \frac{n_K}{2} \log(|s_0|+1) + \frac{n_K}{2} +  (1+\delta(\chi))(1+r^{-1}) \right) \\
&\leq \{ 4\log D_K + 2 \log \N \kf_{\chi} +2n_K\log(|t|+3) + 2n_K + 4 + 4\delta(\chi) \} \cdot r + 4 + 4\delta(\chi)
\end{align*}
as $D_{\chi} = D_K \N\kf_{\chi}$. 
\end{proof}

To improve the bound in \cref{ZerosInCircle-Classical}, we exhibit an explicit inequality involving the logarithmic derivative of $L(s,\chi)$ comparable with \cite[Theorem 2]{KadiriNg} for the Dedekind zeta function. 

\begin{prop} \label{ExplicitInequality-Convexity} Let  $0 < \epsilon < \tfrac{1}{4}, T  \geq 1$ and $s = \sigma+it$. For a primitive Hecke character $\chi$, define a multiset of non-trivial zeros of $L(s,\chi)$ by
\[
\cZ_{r, t} = \{ \rho = \beta+i\gamma \, \mid \, L(\rho, \chi) = 0,  |1+it - \rho| < r \}. 
\]
Then, for $0 < r < \epsilon$, 
\begin{equation}
 - \Re\Big\{ \frac{L'}{L}(s,\chi) \Big\}  \leq \big( \tfrac{1}{4} + \tfrac{\epsilon}{\pi} \big)\cL_{\chi} + 4 \epsilon^2 \cL_{\chi}' + \delta(\chi)  \Re\Big\{ \frac{1}{s-1} \Big\}   - \sum_{\rho \in \cZ_{r,t} }  \Re\Big\{\frac{1}{s-\rho} \Big\}  + O_{\epsilon}(n_K)
\label{eqn:EI_1}
\end{equation}
and
\begin{equation}
 - \Re\Big\{ \frac{L'}{L}(s,\chi) \Big\}  \leq \big( \tfrac{1}{4} + \tfrac{\epsilon}{\pi} \big)\cL_{\chi}   + \delta(\chi)  \Re\Big\{ \frac{1}{s-1} \Big\} + O_{\epsilon}(n_K)
\label{eqn:EI_2}
\end{equation}
uniformly in the region
\[
1 < \sigma \leq 1 + \epsilon, \qquad |t| \leq T,
\]
where $\cL_{\chi} = \log D_{\chi} + n_K \log(T+3)$ and $\cL_{\chi}' = \log D_K + \cL_{\chi}$. 
\end{prop}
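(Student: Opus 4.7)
The starting point is Lemma~\ref{ExplicitFormula}, which gives the identity
\[
-\Re\Big\{\frac{L'}{L}(s,\chi)\Big\} = \tfrac{1}{2}\log D_\chi + \Re\Big\{\frac{\gamma_\chi'}{\gamma_\chi}(s)\Big\} + \delta(\chi)\Re\Big\{\tfrac{1}{s-1}+\tfrac{1}{s}\Big\} - \sum_\rho \Re\frac{1}{s-\rho}.
\]
By Lemma~\ref{digamma} the gamma-factor piece contributes only $\tfrac{n_K}{2}\log(T+3) + O_\epsilon(n_K)$. Naively dropping the nonnegative zero sum yields a coefficient $\tfrac{1}{2}$ in front of $\cL_\chi$, whereas we seek $\tfrac{1}{4}+\tfrac{\epsilon}{\pi}$; the missing $\tfrac{1}{4}\cL_\chi$ of ``mass'' must therefore be extracted as a lower bound on $\sum_{\rho\notin\cZ_{r,t}}\Re 1/(s-\rho)$. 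The plan is to obtain this lower bound via a Poisson--Jensen / Schwarz integral argument on a disk of radius $R\asymp 1$ centered at $s_0 = 1+\eta+it$ (with $\eta$ a small auxiliary parameter chosen so the disk straddles the critical strip), using Rademacher's convexity bound (Lemma~\ref{Rademacher}) to control $\log|L|$ on the boundary.

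\textbf{Execution.} Form the Blaschke-renormalized auxiliary $G(z) = L(z,\chi)\prod_\rho \bigl[R^2-(z-s_0)\overline{(\rho-s_0)}\bigr]/\bigl[R(z-\rho)\bigr]$, where the product runs over zeros of $L(\cdot,\chi)$ in the closed disk $\mathcal{D}$. Then $G$ is analytic and nonvanishing on $\mathcal{D}$ with $|G|=|L|$ on $\partial\mathcal{D}$. Differentiating the Schwarz integral representation of $\log G$ at $s = \sigma+it$ produces
\[
-\Re\frac{L'}{L}(s,\chi) = -\sum_{\rho\in\mathcal{D}} \Re\frac{1}{s-\rho} + \text{(reflected-zero sum, each term $O(1/R)$)} + \frac{1}{2\pi}\int_0^{2\pi}\log|L(s_0+Re^{i\phi},\chi)|\,\Re K_s(\phi)\,d\phi,
\]
where $K_s(\phi) = 2Re^{i\phi}/(Re^{i\phi}-(s-s_0))^2$. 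Substituting the Rademacher bound $\log|L(z,\chi)| \le \tfrac{1+\eta-\Re z}{2}(\log D_\chi + n_K\log(|\Im z|+3)) + O_\eta(n_K)$ and decomposing $\tfrac{1+\eta-\Re z}{2} = \tfrac{1+\eta}{2} - \tfrac{\Re z}{2}$: the constant part integrates to zero against $K_s$ (since $\int K_s = 0$), while the $-\tfrac{1}{2}\Re z$ part picks out the Cauchy-kernel direction at $s$, producing main contribution $(\tfrac{1}{4}+\tfrac{\epsilon}{\pi})\cL_\chi$ together with a second-order cross term that is absorbed into $4\epsilon^2 \cL_\chi'$ (the extra $\log D_K$ appearing in $\cL_\chi'$ arises from splitting $D_\chi = D_K \cdot \N\kf_\chi$ in the quadratic correction). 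The reflected-zero sum contributes $O_\epsilon(n_K)$ after invoking the zero count $|\mathcal{D} \cap \{\rho\}| \ll \cL_\chi$ from Lemma~\ref{ZerosInCircle-Classical}. Finally, restrict the remaining zero sum from $\mathcal{D}$ down to $\cZ_{r,t}\subset\mathcal{D}$ and absorb the tail $\sum_{\rho\in\mathcal{D}\setminus\cZ_{r,t}} \Re 1/(s-\rho)$ into $O_\epsilon(n_K)$ via Lemma~\ref{ZerosInCircle-Classical} (each such term is $O_\epsilon(1)$); this yields~\eqref{eqn:EI_1}. Inequality \eqref{eqn:EI_2} follows from the same argument without restricting to $\cZ_{r,t}$ (one discards the full nonnegative sum $-\sum_{\rho\in\mathcal{D}}\Re 1/(s-\rho)$), and then the $4\epsilon^2\cL_\chi'$ correction can be avoided by a coarser kernel choice.

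\textbf{Main obstacle.} The technical heart is the explicit kernel computation producing exactly the constants $\tfrac{1}{4}+\tfrac{\epsilon}{\pi}$ and $4\epsilon^2$, with the quadratic correction attached to $\cL_\chi'$ rather than $\cL_\chi$. This requires judicious choice of the disk radius $R$ and auxiliary parameter $\eta$ and careful bookkeeping of how $\Re z$ ranges over the disk as well as of the way the $\log D_K$ and $\log\N\kf_\chi$ portions of $\log D_\chi$ propagate through the integral; a suboptimal choice yields the explicit-formula coefficient $\tfrac{1}{2}$ or an $\epsilon$-dependence of the wrong shape, which would be insufficient for the Deuring--Heilbronn application in \cref{sec:DH_proof}.
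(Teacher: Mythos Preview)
Your overall architecture is right and matches the paper: apply a Poisson--Jensen/Borel--Carath\'eodory type identity on a disk of radius $R\approx 1$ centered near $1+it$, bound $\log|L|$ on the boundary by Rademacher, and this is what produces the coefficient $\tfrac14+\tfrac{\epsilon}{\pi}$ in place of the $\tfrac12$ coming from the explicit formula. (Concretely, the paper invokes \cite[Lemma~3.2]{HBLinnik}, which packages exactly the differentiated Schwarz formula you describe.)

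There is, however, a genuine gap in how you handle the ``reflected-zero'' sum and in where you locate the $4\epsilon^2\cL_\chi'$ term. You write that the reflected sum contributes $O_\epsilon(n_K)$ because each term is $O(1/R)$ and there are $\ll \cL_\chi$ zeros in $\mathcal D$; but that arithmetic gives $O(\cL_\chi)$, not $O_\epsilon(n_K)$, and a stray $O(\cL_\chi)$ would wipe out the entire saving $\tfrac12\to\tfrac14+\tfrac{\epsilon}{\pi}$. The point you are missing is that in the Heath--Brown formula the direct and reflected pieces for each zero combine into the nonnegative expression
\[
\Re\Big\{\frac{1}{s-\rho}-\frac{s-\rho}{R^2}\Big\}\ \ge\ 0\qquad(\rho\in\mathcal D),
\]
so unwanted zeros are discarded \emph{in pairs}, not term by term. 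For \eqref{eqn:EI_2} one discards every pair; for \eqref{eqn:EI_1} one keeps only the pairs with $\rho\in\cZ_{r,t}$. The reflected pieces of the \emph{kept} zeros are then bounded by $\Re(s-\rho)/R^2\le 2\epsilon/R^2$ each, and there are at most $N_\chi(r;1+it)\le 2r\cL_\chi'+O(n_K)\le 2\epsilon\cL_\chi'+O(n_K)$ of them by Lemma~\ref{ZerosInCircle-Classical}; this is precisely the source of the $4\epsilon^2\cL_\chi'$ term. It does \emph{not} come from a second-order cross term in the boundary integral as you assert --- the boundary integral contributes only $(\tfrac14+\tfrac{\epsilon}{\pi})\cL_\chi+O_\epsilon(n_K)$, and this also explains why the correction carries the extra $\log D_K$ (it enters through the zero-count lemma, which involves $-\zeta_K'/\zeta_K$, not through Rademacher).
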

\begin{proof} This result is a modified version of \cite[Lemma 4.3]{Zaman_2015a} which is motivated by \cite[Lemma 3.1]{HBLinnik}. Consequently, we sketch the argument found in \cite{Zaman_2015a} highlighting the necessary modifications. Assume $\chi$ is non-trivial.  Apply \cite[Lemma 3.2]{HBLinnik} with $f( \, \cdot \,) = L(\, \cdot \, ,\chi), a=s$ and $R = 1-\eta$ where $\eta = \eta_{s,\chi} \in (0,\tfrac{1}{10})$ is chosen sufficiently small so that $L(w,\chi)$ has no zeros on the circle $|w-s| = R$. Then
\begin{equation}
-\Re \Big\{ \frac{L'}{L}(s,\chi) \Big\} = -\sum_{|s-\rho| < R} \Re\Big\{ \frac{1}{s-\rho} - \frac{s-\rho}{R^2} \Big\} - J
\label{JensenUse}
\end{equation}
where
\[
J :=  \int_0^{2\pi} \frac{\cos \theta}{\pi R} \cdot \log| L(s+ R e^{i\theta},\chi)|d\theta.
\]
To lower bound $J$, write
\[
J = \int_0^{\pi/2} + \int_{\pi/2}^{3\pi/2} + \int_{3\pi/2}^{2\pi} = J_1 + J_2 + J_3, 
\]
say, so we may consider each contribution separately. For $J_1$, notice
\[
\log| L(s+ R e^{i\theta},\chi)| \leq \log \zeta_K(\sigma+ R\cos \theta) \ll n_K \log\Big( \frac{1}{\sigma-1 + R\cos \theta}\Big).
\]
Writing $[0,\tfrac{\pi}{2}] = [0,\tfrac{\pi}{2} - (\sigma-1)] \cup [\tfrac{\pi}{2}-(\sigma-1), \tfrac{\pi}{2}] = I_1 \cup I_2$, say. Then 
\[
J_1 = \int_{I_1} + \int_{I_2} \ll n_K \int_{I_1} \cos \theta \log(1/\cos \theta) d\theta + n_K \log(1/(\sigma-1)) \int_{I_2} \cos \theta d\theta \ll_{\epsilon} n_K.
\]
A similar argument holds for $J_3$ so 
\[
J_1 + J_3 \ll_{\epsilon} n_K.
\]
For $J_2$, consider $\theta \in [\pi/2, 3\pi/2]$. As $1 < \sigma \leq 1 + \epsilon$ and $R < 1$,
\[
0 < \sigma + R\cos\theta \leq 1+ \epsilon.
\]
Hence, by \cref{Rademacher}, 
\begin{align*}
 \log|L(s+Re^{i\theta},\chi)| 
 & \leq \tfrac{1}{2} \cL_{\chi} (1-\sigma- R \cos\theta + \epsilon) + O(\epsilon^{-1}n_K)    \\
 & \leq \tfrac{1}{2}  \cL_{\chi}( -R \cos\theta + \epsilon) +  O(\epsilon^{-1}n_K).
\end{align*}
Thus,
\[
J_2 \geq \frac{\cL_{\chi}}{2 \pi R} \int_{\pi/2}^{3\pi/2} -R \cos^2 \theta + \epsilon \cos \theta d\theta + O_{\epsilon}(n_K) 
\]
yielding overall
\begin{equation}
J \geq  - ( \tfrac{1}{4}+ \tfrac{\epsilon}{\pi R} )\cL_{\chi}  + O_{\epsilon}(n_K).
\label{Jintegral}
\end{equation}
For the sum over zeros in \eqref{JensenUse}, observe that the terms are non-negative so \eqref{eqn:EI_2} follows immediately from \eqref{JensenUse} and \eqref{Jintegral} after taking $\eta \rightarrow 0$ which implies $R \rightarrow 1$.  To prove \eqref{eqn:EI_1}, consider $0 < r  <  \tfrac{1}{4}$. By the same observation, we may restrict our sum over zeros from $|s-\rho| < R$ to a smaller circle within it: $|1+it-\rho| < r$. As $r < \epsilon < 1/4$ by assumption, we discard the zeros outside this smaller circle. For such zeros $\rho$ satisfying $|1+it-\rho| < r$, notice
\[
\Re \{ s- \rho  \} = \sigma - \beta < \epsilon + r< 2\epsilon
\]
implying, by \cref{ZerosInCircle-Classical}, that 
\begin{equation}
\sum_{|1+it-\rho| < r} \Re\big\{ \frac{s-\rho}{R^2} \} \leq \frac{2\epsilon}{R^2} \cdot  \big\{ \big(2\cL_{\chi}' + 2n_K + 8\big) r + 8\big\} \leq \frac{4 \epsilon^2}{R^2} \cL_{\chi}' + O(n_K).
\label{SmallerCircle}
\end{equation}
% For $\cL \gg_{\epsilon} 1$, we may assume $c_0\cL^{-1} \log \cL \leq \epsilon$. 
Thus, \eqref{eqn:EI_1} immediately follows upon combining \eqref{JensenUse}, \eqref{Jintegral}, and \eqref{SmallerCircle}, and taking $\eta \rightarrow 0$ which implies $R \rightarrow 1$. This completes the proof for $\chi$ non-trivial. For $\chi = \chi_0$ trivial, we apply the same modifications as described at the end of the proof of \cite[Lemma 4.3]{Zaman_2015a}. 
\end{proof}

\begin{lem} \label{ZerosInCircle-Convexity} Let $\chi \pmod{\kq}$ be given and $0 < r < \epsilon < 1/4$. If $s = \sigma+it$ and
\[
N_{\chi}(r; s) = \#\{ \rho = \beta + i\gamma : L(\rho, \chi) = 0, |s-\rho| \leq r\}
\]
then
\[
N_{\chi}(r; s) \leq \{ 1 +  \tfrac{4}{\pi}\epsilon + 16 \epsilon^2\}   \big( 2 \log D_K  +  \log \N\kf_{\chi} +  n_K \log( |t|+ 3) + O_{\epsilon}(n_K) \big) \cdot r+ 4 + 4\delta(\chi). 
\]
\end{lem}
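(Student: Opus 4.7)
The plan is to adapt the proof of \cref{ZerosInCircle-Classical} by replacing the digamma-based input from \cref{ExplicitFormula} and \cref{digamma} with the sharper convexity estimate of \cref{ExplicitInequality-Convexity}. First I would set $s_0 = 1+r+it$. Since every non-trivial zero $\rho = \beta+i\gamma$ of $L(s,\chi)$ satisfies $0 < \beta < 1$, one has $\Re(s_0-\rho) \ge r$ and hence $\Re\{1/(s_0-\rho)\} > 0$ throughout. By the same geometric comparison used in \cref{ZerosInCircle-Classical}, one obtains $N_\chi(r;s) \le N_\chi(2r;s_0)$, and for every zero $\rho$ with $|s_0-\rho| \le 2r$ the inequality $\Re\{1/(s_0-\rho)\} \ge 1/(4r)$ holds. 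Enlarging the sum to all non-trivial zeros by positivity yields
\[
N_\chi(r;s) \le 4r\sum_\rho \Re\{1/(s_0-\rho)\}.
\]

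Next I would apply inequality \eqref{eqn:EI_1} of \cref{ExplicitInequality-Convexity} at $s = s_0$, taking $T = |t|$ so that $\cL_\chi = \log D_\chi + n_K\log(|t|+3)$ and $\cL_\chi' = \log D_K + \cL_\chi$. Rearranging gives
\[
\sum_\rho \Re\{1/(s_0-\rho)\} \le \bigl(\tfrac{1}{4}+\tfrac{\epsilon}{\pi}\bigr)\cL_\chi + 4\epsilon^2\cL_\chi' + \delta(\chi)\Re\{1/(s_0-1)\} + \Re\bigl\{L'/L(s_0,\chi)\bigr\} + O_\epsilon(n_K).
\]
The auxiliary terms are bounded directly: $\Re\{1/(s_0-1)\} = r/(r^2+t^2) \le 1/r$, while comparing Euler products yields $|\Re\{L'/L(s_0,\chi)\}| \le -\zeta_K'/\zeta_K(1+r) = O(n_K/r) + O(n_K)$. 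Multiplying through by $4r$, these two contributions produce at most $4$ and $4\delta(\chi)$, together with further pieces absorbed into $O_\epsilon(n_K)r$.

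The principal contribution is then $r\bigl[(1+\tfrac{4\epsilon}{\pi})\cL_\chi + 16\epsilon^2\cL_\chi'\bigr]$. Since $\cL_\chi \le \cL_\chi'$, this is at most $r\{1+\tfrac{4\epsilon}{\pi}+16\epsilon^2\}\cL_\chi'$, and writing out $\cL_\chi' = 2\log D_K + \log\N\kf_\chi + n_K\log(|t|+3)$ recovers exactly the displayed bound. I expect the only subtle step to be the bookkeeping verifying that the residual pole and $L'/L$ contributions fit within the $O_\epsilon(n_K)r$ allowance and the additive constant $4+4\delta(\chi)$. Otherwise the argument is a direct upgrade of the classical lemma, with the convexity input \eqref{eqn:EI_1} furnishing the sharper leading factor $1+\tfrac{4\epsilon}{\pi}+16\epsilon^2$ in place of the constants coming from the digamma bound.
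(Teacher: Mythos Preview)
Your overall strategy is the same as the paper's---replace the digamma input in the proof of \cref{ZerosInCircle-Classical} with \cref{ExplicitInequality-Convexity}---but two steps do not go through as written.

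First, after reaching $N_\chi(r;s)\le 4r\sum_{\rho}\Re\{1/(s_0-\rho)\}$ you invoke \eqref{eqn:EI_1} to bound the full zero sum. But \eqref{eqn:EI_1} only controls the \emph{partial} sum over $\rho\in\cZ_{r,t}=\{\rho:|1+it-\rho|<r\}$, not the sum over all non-trivial zeros; the remaining terms in \eqref{JensenUse} were discarded, not bounded. The fix is to avoid the enlargement entirely: use $N_\chi(r;s)\le N_\chi(r;1+it)$, observe that every $\rho$ counted lies in $\cZ_{r,t}$ and satisfies $|s_0-\rho|\le 2r$, hence $N_\chi(r;1+it)\le 4r\sum_{\rho\in\cZ_{r,t}}\Re\{1/(s_0-\rho)\}$, and then \eqref{eqn:EI_1} applies directly.

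Second, your treatment of $\Re\{L'/L(s_0,\chi)\}\le -\zeta_K'/\zeta_K(1+r)$ does not deliver the stated additive constant. The estimate $-\zeta_K'/\zeta_K(1+r)=O(n_K/r)+O(n_K)$ is valid (via $-\zeta_K'/\zeta_K(\sigma)\le -n_K\,\zeta_{\Q}'/\zeta_{\Q}(\sigma)$), but multiplying by $4r$ then yields an additive $O(n_K)$, not the precise $4$ in the lemma, and this cannot be absorbed into $O_\epsilon(n_K)\cdot r$ since $r$ may be arbitrarily small. Just as the classical proof applies \cref{ExplicitFormula} and \cref{digamma} a second time to $\zeta_K$, here one should apply \eqref{eqn:EI_2} with the trivial character to obtain
\[
-\tfrac{\zeta_K'}{\zeta_K}(1+r)\le\bigl(\tfrac14+\tfrac{\epsilon}{\pi}\bigr)\log D_K+\tfrac{1}{r}+O_\epsilon(n_K).
\]
The $1/r$ (with coefficient exactly $1$, from the simple pole of $\zeta_K$) produces the additive $4$, while the $(\tfrac14+\tfrac{\epsilon}{\pi})\log D_K$ combines with $(\tfrac14+\tfrac{\epsilon}{\pi})\cL_\chi$ to give $(\tfrac14+\tfrac{\epsilon}{\pi})\cL_\chi'$; together with the $4\epsilon^2\cL_\chi'$ term this yields precisely the leading factor $\{1+\tfrac{4}{\pi}\epsilon+16\epsilon^2\}\cL_\chi'$ after multiplying by $4r$. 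With these two adjustments your argument matches the paper's.
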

\begin{proof} The proof is analogous to \cref{ZerosInCircle-Classical} using \cref{ExplicitInequality-Convexity} in place of \cref{ExplicitFormula,digamma}.   
\end{proof}

%%%%%%%%%%%%%%%%%%%%%%%%%%%%%%%%%%%% %%%%%%%%%%%%%%%%%%%%%%%%%%%%%%%%%
\subsection{Arithmetic Sums} 

We estimate various sums over integral ideals of $K$ which requires some additional notation.  Recall that the Dedekind zeta function $\zeta_K(s)$ is the primitive Hecke $L$-function, defined by \eqref{def:Hecke_L-fcn}, associated to the trivial character $\chi_0$. Namely, 
\[
\zeta_K(s) = \sum_{\kn \subseteq\cO_K} (\N\kn)^{-s} = \prod_{\kp}\Big(1-\frac{1}{\N\kp^s} \Big)^{-1}
\]
for $\Re\{s\} > 1$. Since $\zeta_K(s)$ has a simple pole at $s=1$, we may define
\begin{equation}
\kappa_K := \Res_{s=1} \zeta_K(s) \quad \text{ and } \quad \gamma_K := \kappa_K^{-1} \lim_{s \rightarrow 1} \Big( \zeta_K(s) - \frac{\kappa_K}{s-1} \Big)
\label{def:Zeta_Laurent}
\end{equation}
so the Laurent expansion of $\zeta_K(s)$ at $s=1$ is given by 
\[
\zeta_K(s) =  \frac{\kappa_K}{s-1} + \kappa_K \gamma_K + O_K( |s-1| ).
\]
We refer to $\gamma_K$ as the \emph{Euler-Kronecker constant of $K$}, which was first introduced by Ihara \cite{Ihara}. For further details on $\gamma_K$, see \cite{Ihara,Ihara2,Murty} for example.
\begin{lem}
\label{lem:harmonic_sum}
For $x > 0$ and $\eta > 0$,
\[
\left|\sum_{\mathrm{N}\kn\leq x}\frac{1}{\mathrm{N}\kn}\left(1-\frac{\mathrm{N}\kn}{x}\right)^{n_K}-\kappa_K\left(\log x-\sum_{j=1}^{n_K} \frac{1}{j}\right)-\kappa_K\gamma_K\right|\ll e^{O_{\eta}(n_K)} \big( n_K^{n_K} D_K \big)^{\tfrac{1}{4}+\eta} x^{-\frac{1}{2}}.
\]
\end{lem}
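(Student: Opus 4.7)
The plan is to evaluate $S(x) := \sum_{\N\kn \leq x}\frac{1}{\N\kn}(1 - \N\kn/x)^{n_K}$ via Mellin inversion. The smoothing kernel $(1-u)^{n_K}$ on $[0,1]$ has Mellin transform equal to the Beta function $B(s, n_K+1) = \Gamma(s)\Gamma(n_K+1)/\Gamma(s+n_K+1)$, so after substituting the inverse Mellin integral for $(1-u)^{n_K}$ and interchanging with the sum over ideals (justified by absolute convergence), one obtains for any $c > 0$
\[
S(x) = \frac{1}{2\pi i}\int_{(c)} \zeta_K(1+s)\, B(s, n_K+1)\, x^s\, ds.
\]

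I would then shift the contour to $\Re s = -\tfrac{1}{2} - \eta$, crossing only a double pole at $s = 0$ formed by the simple pole of $\zeta_K(1+s)$ (residue $\kappa_K$) and the simple pole of $\Gamma(s)$ in the Beta factor. Using the Laurent expansions $\zeta_K(1+s) = \kappa_K/s + \kappa_K\gamma_K + O(s)$ and $B(s, n_K+1) = 1/s - \sum_{j=1}^{n_K}\tfrac{1}{j} + O(s)$ (the latter obtained from $\Gamma(s) = 1/s - \gamma + O(s)$ together with $\Gamma(n_K+1)/\Gamma(s+n_K+1) = 1 - s\psi(n_K+1) + O(s^2)$, where $\psi(n_K+1) = -\gamma + \sum_{j=1}^{n_K}\tfrac{1}{j}$ cancels the Euler--Mascheroni constant), combined with $x^s = 1 + s\log x + O(s^2)$, the residue evaluates to exactly $\kappa_K(\log x - \sum_{j=1}^{n_K}\tfrac{1}{j}) + \kappa_K\gamma_K$, matching the claimed main term.

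For the shifted integral on $\Re s = -\tfrac{1}{2} - \eta$, I would bound $|\zeta_K(1+s)| = |L(1+s, \chi_0)|$ using Rademacher's convexity estimate (\cref{Rademacher}) at $\Re(1+s) = \tfrac{1}{2} - \eta$, obtaining
\[
|\zeta_K(1+s)| \ll e^{O_\eta(n_K)} D_K^{1/4 + \eta}(3+|t|)^{n_K(1/4 + \eta)},
\]
where the factor $|(2+s)/s|$ arising from $\delta(\chi_0) = 1$ stays bounded on this line. Stirling's formula applied to $B(s, n_K+1) = n_K!/(s(s+1)\cdots(s+n_K))$ supplies decay in both $|t|$ and $n_K$: roughly $|B(s,n_K+1)| \ll n_K^{1/2 + \eta}$ for $|t| \lesssim n_K$ and $\ll n_K! / |t|^{n_K+1}$ for $|t| \gg n_K$. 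Combined with $|x^s| = x^{-1/2 - \eta}$, the resulting $t$-integral converges, and balancing Rademacher's growth $(3+|t|)^{n_K(1/4 + \eta)}$ against the Stirling decay contributes at most $n_K^{n_K(1/4 + \eta) + O_\eta(1)}$ from the dominant range $|t| \lesssim n_K$. Collecting terms yields an error of the form $e^{O_\eta(n_K)}(n_K^{n_K} D_K)^{1/4 + \eta} x^{-1/2 - \eta}$, and the extra $x^{-\eta}$ is harmlessly absorbed into the implied constant.

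The main obstacle will be the bookkeeping of $n_K$-dependence in the interplay between Rademacher's polynomial growth $(3+|t|)^{n_K(1/4 + \eta)}$ and Stirling's asymptotics for $B(s, n_K+1)$. This is precisely why a smoothing kernel of smoothness order exactly $n_K$ is used: it gives $n_K + 1$ poles in the Beta factor, whose combined Stirling decay in $|t|$ exactly dominates the convexity growth and forces the $t$-integral to contribute only $n_K^{n_K(1/4 + \eta) + O_\eta(1)}$ rather than a catastrophically larger power like $n_K^{n_K/2}$. This balance is what produces the characteristic factor $(n_K^{n_K})^{1/4 + \eta}$ anticipated in the remarks following \cref{LFZD-MainTheorem}.
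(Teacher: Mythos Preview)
Your proposal is correct and follows essentially the same route as the paper: Mellin inversion with the Beta kernel $B(s,n_K+1)=n_K!/\prod_{j=0}^{n_K}(s+j)$, a contour shift to $\Re s=-\tfrac12$ picking up the double pole at $s=0$, and then Rademacher plus Stirling to control the shifted integral (the paper likewise restricts to $|t|\leq n_K$ to extract the $(n_K^{n_K})^{1/4+\eta}$ factor). The only cosmetic discrepancy is that you shift to $\Re s=-\tfrac12-\eta$ rather than exactly $-\tfrac12$; since the claim ``$x^{-\eta}$ is harmlessly absorbed'' is false for $0<x<1$, either shift to $-\tfrac12$ as the paper does or note that the lemma is trivial for $x<1$.
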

\begin{proof}
Without loss, we may assume $\eta \in (0,1/2)$. Observe
{\small
\begin{align*}
\sum_{\mathrm{N}\kn\leq x}\frac{1}{\mathrm{N}\kn}\left(1-\frac{\mathrm{N}\kn}{x}\right)^{n_K}-\kappa_K\left(\log x-\sum_{j=1}^{n_K} \frac{1}{j}\right)-\kappa_K\gamma_K=\frac{1}{2\pi i}\int_{-\frac{1}{2}-i\infty}^{-\frac{1}{2}+i\infty}\zeta_K(s+1)\frac{x^s}{s}\frac{n_K!}{\prod_{j=1}^{n_K} (s+j)}ds.
\end{align*}}%
Using Lemma \ref{Rademacher} and noting $\zeta_{\Q}(1+\eta)^{n_K} \ll e^{O_{\eta}(n_K)}$, it follows that
\begin{align*}
&\frac{1}{2\pi i}\int_{-\frac{1}{2}-i\infty}^{-\frac{1}{2}+i\infty}\zeta_K(s+1)\frac{x^s}{s}\frac{n_K!}{\prod_{j=1}^{n_K} (s+j)}ds\\
&\ll e^{O_{\eta}(n_K)} D_{K}^{\frac{1}{4} +\eta}x^{-1/2}n_K!\int_{-\infty}^{\infty}(1+|t|)^{(\frac{1}{4}+\eta)n_K}\left|\frac{\Gamma(-\frac{1}{2}+it)}{\Gamma(\frac{1}{2}+n_K+it)}\right|dt\\
&\ll e^{O_{\eta}(n_K)}D_{K}^{\frac{1}{4}+\eta}x^{-1/2}n_K!\int_{-n_K}^{n_K}(1+|t|)^{(\frac{1}{4}+\eta)n_K}\left|\frac{\Gamma(-\frac{1}{2}+it)}{\Gamma(\frac{1}{2}+n_K+it)}\right|dt\\
&\ll \frac{e^{O_{\eta}	(n_K)}  D_{K}^{\frac{1}{4}+\eta}x^{-1/2}n_K!}{\Gamma(n_K+\frac{1}{2})} (n_K^{n_K})^{\frac{1}{4}+\eta} \\
&\ll e^{O_{\eta}(n_K)} (n_K^{n_K}D_{K})^{\frac{1}{4}+\eta}x^{-1/2} 
\end{align*} 
as claimed.
\end{proof}

\begin{cor}
\label{cor:harmonic_sum}
Let $\eta > 0$ and $C_1 = C_1(\eta) \geq 3$ be sufficiently large. If
\[
x \geq C_1 e^{O_{\eta}(n_K)}  \big( n_K^{n_K} D_K)^{1/2+\eta},
\]
then
\[
\sum_{\mathrm{N}\kn \leq x} \frac{1}{\mathrm{N}\kn} \gg_{\eta} \kappa_K \log x.
\]
\end{cor}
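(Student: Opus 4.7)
The plan is to deduce the corollary directly from \cref{lem:harmonic_sum} by dropping the smoothing weight $(1-\N\kn/x)^{n_K}$, which lies in $[0,1]$ for $\N\kn \leq x$, and then verifying that the resulting main term $\kappa_K \log x$ dominates every other contribution. Writing $H_{n_K} = \sum_{j=1}^{n_K} \tfrac{1}{j}$, we have
\[
\sum_{\N\kn \leq x} \frac{1}{\N\kn} \;\geq\; \sum_{\N\kn \leq x} \frac{1}{\N\kn}\Big(1 - \frac{\N\kn}{x}\Big)^{n_K} = \kappa_K \log x - \kappa_K H_{n_K} + \kappa_K \gamma_K + E(x),
\]
where, applying \cref{lem:harmonic_sum} with $\eta/2$ in place of $\eta$, the error term satisfies $|E(x)| \ll e^{O_\eta(n_K)}(n_K^{n_K}D_K)^{1/4+\eta/2}x^{-1/2}$.

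The problem then reduces to showing that the three correction terms combined occupy at most a fixed fraction strictly less than $1$ of $\kappa_K \log x$. The harmonic sum is easy: $H_{n_K} \leq 1 + \log n_K$, and the hypothesis forces $\log x \gg_\eta n_K \log n_K$, so this contribution is negligible. For $E(x)$, the hypothesis $x \geq C_1 e^{O_\eta(n_K)}(n_K^{n_K}D_K)^{1/2+\eta}$, with the implied constant in $O_\eta(n_K)$ taken at least twice the one from \cref{lem:harmonic_sum}, gives $x^{1/2} \geq C_1^{1/2}(n_K^{n_K}D_K)^{1/4+\eta/2}$, whence $|E(x)| = O(C_1^{-1/2})$. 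Combining this with a classical effective lower bound for $\kappa_K$ (for instance $\kappa_K \gg (c \log D_K)^{-n_K}$ due to Stark) makes $|E(x)|/(\kappa_K \log x)$ arbitrarily small once $C_1(\eta)$ is large.

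The main obstacle is controlling the Euler-Kronecker constant $\gamma_K$. Here I would invoke an explicit lower bound of the form $\gamma_K \geq -A \log D_K - B n_K - C$ for absolute constants $A,B,C$, obtained from \cref{ExplicitFormula} applied to $\zeta_K(s)$ near $s=1$ (keeping the $\tfrac{1}{2}\log D_K$ term and discarding the nonpositive sum over zeros). Combined with the hypothesis $\log x \geq (\tfrac{1}{2}+\eta)\log D_K$, the $\kappa_K \gamma_K$ contribution is at least $-\frac{A}{1/2+\eta}\kappa_K \log x + o_\eta(\kappa_K \log x)$. With $A \leq \tfrac{1}{2}$ arising naturally from the explicit formula, this ratio is strictly less than $1$ for every $\eta > 0$, and summing the three estimates yields $\sum_{\N\kn \leq x} \N\kn^{-1} \gg_\eta \kappa_K \log x$, as required.
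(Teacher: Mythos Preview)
Your overall scaffolding matches the paper's proof: drop the smoothing weight, invoke \cref{lem:harmonic_sum}, use $H_{n_K}\leq 1+\log n_K$, and control $\gamma_K$ via Ihara's inequality $\gamma_K \geq -\tfrac{1}{2}\log D_K + O(n_K)$ (this is exactly \cite[Proposition 3]{Ihara}, and your heuristic derivation from the explicit formula is on target). Those pieces are fine.

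The gap is in how you dispose of the error term $E(x)$. You correctly obtain $|E(x)|\ll C_1^{-1/2}$, but then you want $|E(x)|/(\kappa_K\log x)$ to be bounded by a constant depending only on $\eta$, uniformly in $K$. Invoking Stark's lower bound $\kappa_K \gg (c\log D_K)^{-n_K}$ gives only
\[
\frac{|E(x)|}{\kappa_K \log x} \ll \frac{C_1^{-1/2}(c\log D_K)^{n_K}}{\log x},
\]
and since the hypothesis yields merely $\log x \asymp_\eta n_K\log n_K + \log D_K$, the right side is unbounded as $n_K$ and $D_K$ vary. No known effective lower bound on $\kappa_K$ is strong enough to salvage this step, so the argument as written does not give a field-uniform implied constant.

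The paper circumvents the problem with a one-line dichotomy that avoids any lower bound on $\kappa_K$: if $\kappa_K \leq 1/\log x$, then the trivial inequality $\sum_{\N\kn\leq x}\N\kn^{-1}\geq 1$ already gives the claim; otherwise $\kappa_K \geq 1/\log x$, so dividing $E(x)$ by $\kappa_K$ costs only a factor of $\log x$, and the resulting error $O\big(C_1^{-1/4}\log x\big)$ is indeed $o_\eta(\log x)$ once $C_1(\eta)$ is large. Inserting this case split into your argument repairs it completely.
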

\begin{proof} If $\kappa_K \leq 1/\log x$ then the claim follows from the trivial bound $\sum_{\mathrm{N}\kn \leq x} \frac{1}{\mathrm{N}\kn} \geq 1$. Otherwise, we may assume $\kappa_K \geq 1/\log x$. From \cref{lem:harmonic_sum}, it follows
\[
\frac{1}{\kappa_K} \sum_{\mathrm{N}\kn \leq x} \frac{1}{\mathrm{N}\kn} \geq  \log x - \sum_{j=1}^{n_K} \frac{1}{j} + \gamma_K + O\left( \frac{ e^{O_{\eta}(n_K)} (n_K^{n_K} D_K)^{1/4+\eta}  \log x}{\sqrt{x}} \right).
\]
By \cite[Proposition 3]{Ihara} ,
\[
\gamma_K \geq - \frac{1}{2} \log D_K  + \frac{\gamma_{\Q} + \log 2\pi}{2} \cdot n_K - 1
\]
where $\gamma_{\Q} = 0.577\dots$ is the classical Euler's constant. 
Bounding $\sum_{1 \leq j \leq n_K} j^{-1} \leq \log n_K + 1$ and using the condition on $x$, we deduce from the previous inequality that 
\begin{align*}
\frac{1}{\kappa_K} \sum_{\mathrm{N}\kn \leq x} \frac{1}{\mathrm{N}\kn} 
& \geq (\log x) \{ 1 + O( C_1^{-1/4} ) \} - \frac{1}{2}\log D_K + \frac{\gamma_{\Q} + \log 2\pi}{2} \cdot n_K - \log n_K - 2  \\
& \geq (\log x) \{ 1 + O( C_1^{-1/4} ) \} - \frac{1}{2}\log D_K  - 1 \\
& \geq (\log x) \{ 1 + O( C_1^{-1/4} + (\log x)^{-1} ) \} - \frac{1}{1+2\eta}\log x  \\
& \geq (\log x) \{ \eta + O(C_1^{-1/4} + (\log x)^{-1} ) \}. 
\end{align*}
Since $x \geq C_1 = C_1(\eta)$ and $C_1(\eta)$ is sufficiently large, the desired bound follows. 
\end{proof}

Taking the logarithmic derivative of $\zeta_K(s)$ yields in the usual way
\begin{equation}
-\frac{\zeta_K'}{\zeta_K}(s) = \sum_{\kn \subseteq \cO_K} \frac{\Lambda_K(\kn)}{(\N\kn)^s}
\label{eqn:LogDiffZeta}
\end{equation}
for $\Re\{s\}>1$, where $\Lambda_K( \, \cdot \,)$ is the von Mangoldt $\Lambda$-function of the field $K$ defined by
\begin{equation}
\Lambda_K(\kn) = 
	\begin{cases}
		\log \N\kp & \text{if $\kn$ is a power of a prime ideal $\kp$,} \\
		0 & \text{otherwise.}
	\end{cases}
	\label{def:vonMangoldt}
\end{equation}
Using this identity, we prove a simple elementary lemma.

\begin{lem} \label{lem:PrimeSum}
	For $y \geq 2$,
	\[
	\sum_{\N\kn \leq y} \frac{\Lambda_K(\kn)}{\N\kn} \ll \log(D_Ky).
	\]	
\end{lem}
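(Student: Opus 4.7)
The plan is to compare the desired sum against $-\zeta_K'/\zeta_K(s)$ at a real point $s$ just to the right of $1$, and to control this logarithmic derivative via \cref{ExplicitFormula} applied to the trivial character. Concretely, I would choose $s = 1 + 1/\log y$, which is real since $y \geq 2$, and observe that for every integral ideal $\kn$ with $\N\kn \leq y$ we have $\N\kn^{-s} = \N\kn^{-1}\N\kn^{-1/\log y} \geq e^{-1}\N\kn^{-1}$. Combined with the Dirichlet series identity \eqref{eqn:LogDiffZeta}, this gives
\[
\sum_{\N\kn \leq y} \frac{\Lambda_K(\kn)}{\N\kn} \leq e\sum_{\kn} \frac{\Lambda_K(\kn)}{\N\kn^s} = -e\,\frac{\zeta_K'}{\zeta_K}(s).
\]

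To bound the right-hand side from above, I would apply \cref{ExplicitFormula} with $\chi = \chi_0$ the trivial character modulo $\cO_K$, noting that $D_{\chi_0} = D_K$ and $\delta(\chi_0) = 1$. Since $\Re(s) = 1 + 1/\log y$ strictly exceeds $\Re(\rho)$ for every nontrivial zero $\rho$ of $\zeta_K$, each summand $\Re\{1/(s-\rho)\}$ is positive, so discarding the zero sum only worsens the upper bound. The gamma factor contribution is estimated by $\Re\{\gamma_{\chi_0}'/\gamma_{\chi_0}(s)\} \leq O(n_K)$ from \cref{digamma}, and the pole contributions $1/(s-1) + 1/s$ are at most $\log y + O(1)$. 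This yields
\[
-\frac{\zeta_K'}{\zeta_K}(s) \leq \tfrac{1}{2}\log D_K + \log y + O(n_K).
\]

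Putting the two inequalities together produces the bound $\sum_{\N\kn \leq y}\Lambda_K(\kn)/\N\kn \ll \log D_K + \log y + n_K$. The final step is to absorb the $n_K$ term: by Minkowski's classical discriminant inequality, $n_K \ll \log D_K$ whenever $K \neq \Q$, while for $K = \Q$ one has $n_K = 1 \leq \log y$ trivially since $y \geq 2$. In either case the right-hand side is $O(\log(D_K y))$, as required. No step poses a genuine obstacle; the argument is essentially a one-line application of the explicit formula together with the positivity of the zero contributions, and the only mild care needed is in tracking the $n_K$ dependence in the digamma estimate so that Minkowski's bound suffices to swallow it into $\log(D_K y)$.
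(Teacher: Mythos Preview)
Your proof is correct and follows essentially the same route as the paper: evaluate at $s=1+1/\log y$, dominate the truncated sum by $-e\,\zeta_K'/\zeta_K(s)$ via \eqref{eqn:LogDiffZeta}, apply \cref{ExplicitFormula} and \cref{digamma}, drop the nonnegative zero sum, and absorb $n_K$ into $\log D_K$ using Minkowski. If anything, your write-up is slightly more careful: you keep the factor $e$ explicit (the paper silently drops it, harmlessly for a $\ll$-statement) and you separately treat $K=\Q$, where Minkowski's inequality $n_K\ll\log D_K$ technically fails.
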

\begin{proof}
	Denote $\sigma = 1+ \frac{1}{\log y}$. From \eqref{eqn:LogDiffZeta}, it follows
	\[
	\sum_{\N\kn \leq y} \frac{\Lambda_K(\kn)}{\N\kn} \leq e	\sum_{\kn} \frac{\Lambda_K(\kn)}{\N\kn^\sigma} =  -\frac{\zeta_K'}{\zeta_K}(\sigma). 
	\]
	By \cref{ExplicitFormula,digamma}, the RHS is
	\[
	\leq \frac{1}{2}	\log D_K + \log y + 1 - \sum_{\rho} \Re\Big\{ \frac{1}{\sigma-\rho} \Big\} + O(n_K).
	\]
	As $\Re\{ (\sigma-\rho)^{-1}\} \geq 0$ and $n_K \ll \log D_K$, the claim follows.
\end{proof}
Finally, we end this section with a bound for $h_H$ in terms of $n_K, D_K,$ and $Q=Q_H$.

\begin{lem}
\label{lem:h_H-Bound}
	Let $H$ be a congruence class group of $K$. For $\epsilon > 0$, 
	\[
	h_H \leq e^{O_{\epsilon}(n_K)} D_K^{1/2+\epsilon} Q^{1+\epsilon}.
	\] 
\end{lem}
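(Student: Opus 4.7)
The plan is to bound $h_H$ via the narrow ray class number formula applied to the primitive modulus $\kf_H$, combined with a Minkowski-type bound on $h_K$ and an analysis relating $\N\kf_H$ to $Q$.

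First, I would reduce to the primitive setting: since $h_H = [I(\kq):H] = [I(\kf_H):H^*]$, replace $(H,\kq)$ by $(H^*, \kf_H)$ without changing the quantity to be bounded. Because $H^* \supseteq P_{\kf_H}$, the trivial index inequality gives $h_H \leq h_{\kf_H}$. The narrow ray class number formula
\[
h_{\kf_H} = \frac{h_K \cdot 2^{r_1(K)} \cdot |(\cO_K/\kf_H)^*|}{[\cO_K^*:U_{\kf_H}^+]},
\]
together with $|(\cO_K/\kf_H)^*| \leq \N\kf_H$, $[\cO_K^*:U_{\kf_H}^+] \geq 1$, and $2^{r_1(K)} \leq 2^{n_K}$, yields $h_{\kf_H} \leq e^{O(n_K)} h_K \N\kf_H$.

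Next I would bound $h_K$ via Minkowski's theorem: every ideal class of $K$ contains an integral ideal of norm at most $M_K = (4/\pi)^{r_2}(n_K!/n_K^{n_K})\sqrt{D_K} \leq e^{O(n_K)}\sqrt{D_K}$, by Stirling. Using the pointwise bound $r_K(m) \leq \tau(m)^{n_K}$ on the number of integral ideals of norm exactly $m$ and a Dirichlet hyperbola-type estimate of $\sum_{m \leq X}\tau(m)^{n_K}$, the count of integral ideals of norm at most $M_K$ is $\leq e^{O_\epsilon(n_K)} D_K^{1/2+\epsilon}$, giving $h_K \leq e^{O_\epsilon(n_K)} D_K^{1/2+\epsilon}$.

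The main obstacle is the last step: bounding $\N\kf_H$ by $Q^{1+\epsilon}$. The argument rests on the minimality of $\kf_H$: for each prime $\kp$ with $\kp^{e_\kp} \| \kf_H$ there must exist a character $\chi$ trivial on $H^*$ whose local conductor at $\kp$ equals $\kp^{e_\kp}$, for otherwise $\kf_H/\kp$ would also define $H$, violating minimality. Consequently $\N\kp^{e_\kp} \leq \N\kf_\chi \leq Q$ for every $\kp \mid \kf_H$. To upgrade this pointwise bound to the global bound $\N\kf_H = \prod_{\kp}\N\kp^{e_\kp} \leq Q$, one combines these local witnesses into a single character of full conductor by a structural argument on the finite abelian dual $\widehat{I(\kf_H)/H^*}$: in the cyclic case a generator suffices, while in general one exploits the fact that a finite abelian group cannot be exhausted by the union of its proper subgroups corresponding to local ``drops'' of conductor. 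Combining this with the preceding estimates yields
\[
h_H \leq h_{\kf_H} \leq e^{O_\epsilon(n_K)} D_K^{1/2+\epsilon} \N\kf_H \leq e^{O_\epsilon(n_K)} D_K^{1/2+\epsilon} Q^{1+\epsilon},
\]
as claimed.
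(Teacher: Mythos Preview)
Your reduction to the primitive modulus and the bounds $h_H \leq h_{\kf_H} \leq e^{O(n_K)} h_K\,\N\kf_H$ together with the Minkowski estimate for $h_K$ are fine; they recover Weiss's inequality $h_H \leq e^{O_\epsilon(n_K)} D_K^{1/2+\epsilon}\,\N\kf_H$. The genuine gap is the last step: the assertion $\N\kf_H \leq Q$ is false in general, and the justification you sketch (``a finite abelian group cannot be exhausted by the union of its proper subgroups corresponding to local drops of conductor'') does not hold. A cyclic group is not the union of proper subgroups, but a non-cyclic one certainly can be, and the specific subgroups $G_\kp = U_\kp^{\perp}$ can cover the dual.

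Here is a concrete counterexample over $K=\Q$. Take primes $3,5,7$, set $m=105$, and define the surjection $\phi:(\Z/105\Z)^* \cong (\Z/3)^*\times(\Z/5)^*\times(\Z/7)^* \twoheadrightarrow C_2\times C_2$ by composing the quadratic residue maps $(\Z/3)^*\to\langle(1,0)\rangle$, $(\Z/5)^*\to\langle(0,1)\rangle$, $(\Z/7)^*\to\langle(1,1)\rangle$. Let $H=\ker\phi$. Each $U_{p_i}=\phi((\Z/p_i)^*)$ is a distinct order-$2$ line, so $H$ is primitive with $\kf_H=(105)$. The three nontrivial characters of $C_2\times C_2$ have kernels equal to these three lines, hence conductors $5\cdot 7=35$, $3\cdot 7=21$, $3\cdot 5=15$; thus $Q=35$ while $\N\kf_H=105$. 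Scaling this construction to $k$ primes with the same $C_2\times C_2$ target gives $\N\kf_H \asymp Q^{3/2}$, so no bound of the form $\N\kf_H \leq Q^{1+\epsilon}$ is available for small $\epsilon$.

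The paper avoids this obstacle by never comparing $\N\kf_H$ to $Q$. Instead it writes $h_H = \#\{\chi \pmod{\kf_H}: \chi(H)=1\}$, groups these characters by their exact conductor $\kf\mid \kf_H$ (so automatically $\N\kf\leq Q$), bounds crudely by $\#\mathrm{Cl}(\kf) \leq 2^{n_K} h_K\,\N\kf$, and then estimates
\[
\sum_{\substack{\kf\mid\kf_H\\ \N\kf\leq Q}} \N\kf \;\leq\; Q^{1+\epsilon}\sum_{\kf\mid\kf_H}(\N\kf)^{-\epsilon} \;\leq\; Q^{1+\epsilon} e^{O(\omega(\kf_H))},
\]
finishing with the bound $\omega(\kf_H)\ll_\epsilon n_K + \epsilon\log(D_KQ)$ from \cite[Lemma~1.13]{Weiss}. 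The key point is that $Q$ enters as the maximum of the individual conductors in the sum, not via any relation to $\N\kf_H$.
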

\begin{proof} Observe, by the definitions of $Q$ and $\mathfrak{f}_H$ in \cref{subsec:notation}, that if $\chi$ is a Hecke character satisfying $\chi(H) = 1$ then $\kf_{\chi} \mid \kf_H$ and $\N\kf_{\chi} \leq Q$. Hence,  
	\[
	h_H = \sum_{ \substack{\chi \pmod{\kf_H} \\ \chi(H) =1} } 1   \leq \sum_{\substack{ \N\kf \leq Q \\ \kf \,\mid\, \kf_H} } \sum_{\chi \pmod{\kf} } 1 = \sum_{\substack{ \N\kf \leq Q \\ \kf \,\mid\, \kf_H} } \#\mathrm{Cl}(\kf). 
	\]	
	Recall the classical bound $\#\mathrm{Cl}(\kf) \leq 2^{n_K} h_K \N\kf$ where $h_K$ is the class number of $K$ (in the broad sense) from \cite[Theorem 1.7]{milneCFT}, for example. Bounding the class number using Minkowski's bound (see \cite[Lemma 1.12]{Weiss} for example), we deduce that
	\[
	h_H \leq \sum_{\substack{ \N\kf \leq Q \\ \kf \,\mid\, \kf_H} } e^{O_{\epsilon}(n_K)} D_K^{1/2+\epsilon} \N\kf \leq e^{O_{\epsilon}(n_K)} D_K^{1/2+\epsilon} Q^{1+\epsilon}  \sum_{ \kf \,\mid\, \kf_H} \frac{1}{(\N\kf)^{\epsilon}}.
	\]
	For the remaining sum, notice 
	\[
	 \sum_{ \kf \,\mid\, \kf_H} \frac{1}{(\N\kf)^{\epsilon}} \leq \prod_{\kp \mid \kf_H} \Big(1-\frac{1}{\N\kp^{\epsilon}} \Big)^{-1} \leq \exp\Big( O\Big( \sum_{\kp \mid \kf_H} \frac{1}{\N\kp^{\epsilon}} \Big) \Big)\leq e^{O(\omega(\kf_H))},
	\]
	where $\omega(\kf_H)$ is the number of prime ideals $\kp$ dividing $\kf_H$. From \cite[Lemma 1.13]{Weiss}, we have $\omega(\kf_H) \ll  O_{\epsilon}(n_K)  + \epsilon \log(D_KQ)$ whence the desired estimate follows after rescaling $\epsilon$. 
\end{proof}

\begin{remark}
Weiss \cite[Lemma 1.16]{Weiss} achieves a comparable bound with $Q^{1+\epsilon}$ replaced by $\mathrm{N}\kf_H$.  This seemingly minor difference will in fact play a key role in improving the range of $T$ in \cref{LFZD-MainTheorem}.
\end{remark}

%%%%%%%%%%%%%%%%%%%%%%%%%%%%%%%%%%%%%%%%%%%%%%%%%%%%%%%%%%%%%%%%%%%%%%%%%%%%%%%%%%%%
%%%%%%%%%%%%%%%%%%%%%%%%%%%%%%%%%%%%%%%%%%%%%%%%%%%%%%%%%%%%%%%%%%%%%%%%%%%%%%%%%%%%
%%%%%%%%%%%%%%%%%%%%%%%%%%%%%%%%%%%%%%%%%%%%%%%%%%%%%%%%%%%%%%%%%%%%%%%%%%%%%%%%%%%%  
\section{Proof of \cref{LFZD-MainTheorem}}
\label{sec:LFZD_Proof}

In this section, we deduce \cref{LFZD-MainTheorem} from two key results. Without loss, we may assume $H \pmod{\kq}$ is a primitive congruence class group of $K$. Recall
\[
n_K = [K:\Q], \qquad D_K = |\mathrm{disc}(K/\Q)|
\]
and
\[
h_H = [I(\kq):H], \qquad Q = Q_H = \max\{ \N\kf_{\chi} : \chi(H) = 1\}.
\]
First, we require a mean value theorem for certain Dirichlet polynomials.

\begin{thm} \label{thm:LargeSieve}
Let $\epsilon > 0$ be arbitrary.  Let $b$ be a complex-valued function on the prime ideals $\kp$ of $K$ such that
\[
\sum_{\kp}|b(\kp)|<\infty
\]
and $b(\kp)=0$ when $\N\kp\leq y$.  Let $H \pmod{\kq}$ be a primitive congruence class group of $K$.  If $T\geq1$ and
\begin{equation}
y\geq \big\{ h_H n_K^{\tfrac{5n_K}{4}} D_K^{\tfrac{3}{2}} Q^{\tfrac{1}{2}} T^{\tfrac{n_K}{2}+1}  \big\}^{1+\epsilon} e^{O_{\epsilon}(n_K)}
\label{eqn:LargeSieve_yRange}
\end{equation}
then
\[
\sum_{\substack{\chi \pmod{\kq} \\ \chi(H)=1}}\int_{-T}^{T}\left|\sum_{\kp}b(\kp)\chi(\kp)\N\kp^{-it}\right|^2 dt\ll_{\epsilon} \frac{1}{\log y}\sum_{\kp}\N\kp|b(\kp)|^2.
\]
\end{thm}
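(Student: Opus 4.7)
The plan is to prove this mean value estimate by combining orthogonality of Hecke characters with a Selberg sieve majorant, following the authors' hint in the introduction (``we use the Selberg sieve instead of the usual duality arguments'') that preserving field uniformity forces one to avoid lattice/functional-equation arguments. Set $F(\chi,t) := \sum_\kp b(\kp)\chi(\kp)\N\kp^{-it}$.

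First, I would expand the square and apply orthogonality of the characters of $I(\kq)/H$:
\[
\mathcal{S} := \sum_{\chi(H)=1}\int_{-T}^T |F(\chi,t)|^2 \, dt = h_H \sum_{\substack{(\kp_i,\kq)=1\\ \kp_1\kp_2^{-1}\in H}} b(\kp_1)\overline{b(\kp_2)}\int_{-T}^T \Bigl(\frac{\N\kp_1}{\N\kp_2}\Bigr)^{-it}dt.
\]
The diagonal $\kp_1=\kp_2$ contributes $2T h_H \sum_\kp |b(\kp)|^2$; the hypothesis on $y$ forces $Th_H \log y \ll y \leq \N\kp$ for every $\kp$ in the support of $b$, so term-by-term this diagonal is already bounded by $O((\log y)^{-1}\sum_\kp \N\kp|b(\kp)|^2)$, well within the desired inequality.

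The heart of the proof is controlling the off-diagonal contribution, where the inner integral is $O(|\log(\N\kp_1/\N\kp_2)|^{-1})$. Here I would insert Selberg sieve weights $\lambda_\kd$ (real, supported on $\N\kd \leq z$, with $\lambda_{(1)}=1$ and $z^2 \leq y$), using the identity $1=\bigl(\sum_{\kd\mid\kp}\lambda_\kd\bigr)^2$ which is valid for every prime $\kp$ with $\N\kp > z^2$. Expanding the square and reindexing converts the off-diagonal into a smoothed sum over triples of integral ideals $(\kd_1,\kd_2,\ke)$ with $\kd_1\kd_2\ke$ prime, to which a standard Plancherel/Mellin-type mean value estimate for Dirichlet polynomials over integral ideals applies, using Rademacher's convexity bound (\cref{Rademacher}) on the edge of the critical strip. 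Optimizing $\lambda_\kd$ in the usual Selberg manner (essentially $\lambda_\kd \propto \mu(\kd)$ weighted by logarithmic ratios) extracts the factor $1/(\kappa_K\log z) \sim 1/\log y$ that powers the desired savings.

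The main obstacle is keeping all error terms subordinate to the main term \emph{uniformly} in $K$. The $n_K$-fold smoothing kernel required to control the ideal sums (as in \cref{lem:harmonic_sum}) contributes the factor $n_K^{5n_K/4}$; the convexity estimate for Hecke $L$-functions, combined with the class-number bound $h_H \ll_\epsilon e^{O_\epsilon(n_K)}D_K^{1/2+\epsilon}Q^{1+\epsilon}$ from \cref{lem:h_H-Bound}, produces the remaining $D_K^{3/2} Q^{1/2} T^{n_K/2+1}$. Together these assemble into exactly the threshold \eqref{eqn:LargeSieve_yRange}. A duality-based large sieve would instead invoke the full functional equation of Hecke $L$-functions and a lattice-point count in $K$, introducing $D_K^2$ or worse and defeating the Linnik-type applications in \cite{Zaman3}; the whole point of the Selberg-sieve route is to bypass this obstruction.
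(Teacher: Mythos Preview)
Your outline has the right ingredients list but misassembles them, and in two places the argument would actually break.

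First, the direct diagonal/off-diagonal split after expanding the square does not work over a number field. When $\kp_1\neq\kp_2$ but $\N\kp_1=\N\kp_2$ (there can be up to $n_K$ prime ideals above a given rational prime), the inner integral is $2T$, not $O(|\log(\N\kp_1/\N\kp_2)|^{-1})$; these ``false diagonal'' pairs contribute $2Th_H$ times a genuine bilinear sum and cannot be absorbed term-by-term. The paper avoids this entirely by first passing through the $n_K$-smooth kernel $\Psi$ of \cref{lem:WeightFunction}(vi), which replaces the $t$-integral by $\int_0^\infty |\sum_\kn b(\kn)\chi(\kn)\Psi(x/\N\kn)|^2\,dx/x$ and suppresses the norm-collision issue; this is also precisely where the factor $n_K^{5n_K/4}$ enters.

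Second, and more seriously, your use of the Selberg sieve is vacuous as stated. For a prime ideal $\kp$ with $\N\kp>z$, the only divisor of $\kp$ with norm $\leq z$ is $(1)$, so $(\sum_{\kd\mid\kp}\lambda_\kd)^2=\lambda_{(1)}^2=1$ identically; inserting this into a sum over primes changes nothing, and your ``triples $(\kd_1,\kd_2,\ke)$ with $\kd_1\kd_2\ke$ prime'' are just the original primes. In the paper the sieve is applied at a different stage: after the $\Psi$-smoothing one uses orthogonality and Cauchy--Schwarz over each coset $C$ of $H$ to majorize by
\[
h_H\sum_{C}\Big(\sum_{\kn\in C}\N\kn|b(\kn)|^2\Psi(x/\N\kn)\Big)\Big(\sum_{\kn\in C\cap S_z}\tfrac{1}{\N\kn}\Psi(x/\N\kn)\Big),
\]
and the Selberg sieve is applied to the \emph{second} factor, a sum over \emph{all} integral ideals in $C$ free of small prime factors (\cref{lem:3.3}). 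That sum is where the weights $\lambda_\kd$ genuinely bite and where the savings $\kappa_K/V(z)\ll_\epsilon 1/\log z$ from \cref{cor:harmonic_sum} emerges; the error term there is what forces the threshold \eqref{eqn:LargeSieve_yRange}. Your proposal never produces a sum over general ideals for the sieve to act on.
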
 
\begin{remark}
Weiss proved essentially the same result \cite[Corollary 3.8]{Weiss} as \cref{thm:LargeSieve} but with condition \eqref{eqn:LargeSieve_yRange} replaced by
\[
y \geq (h_H n_K^{2 n_K} D_K Q T^{2n_K})^8. 
\] 
The exponent 8 happens to be large enough so that it inflates $\Cr{c3}$ and $\Cr{c4}$ in \eqref{eqn:Weiss}.  The purpose of \cref{thm:LargeSieve} is to ensure that the size of $y$ in the required large sieve inequality does not affect the exponents in \cref{LFZD-MainTheorem}.  Our proof mostly follows Weiss' arguments but with more careful analysis.
\end{remark}

The second ingredient is a method for detecting zeros of Hecke $L$-functions. To simplify its statement, define
\begin{equation}
\cL := 2 \log D_K + \log Q + n_K \log(T+3) +  \Theta n_K,
\label{def:cL}
\end{equation}
where $\Theta \geq 1$ is sufficiently large, and let $\mathbf{1}( \cdot )$ be an indicator function.  

\begin{prop}
\label{prop:ZeroDetector}
Let $\chi$ be a Hecke character satisfying $\chi(H) = 1$ and $\epsilon \in (0,1/4)$ be arbitrary. Suppose $L(s,\chi)$ has a non-trivial zero $\rho$ satisfying
\[
|1+i\tau - \rho| \leq r
\]
for
\[
\frac{R}{\cL} < r < r_0 \qquad |\tau| \leq T
\]
where $T \geq 1$ is arbitrary, $R \geq 1$ is sufficiently large, and $0 < r_0 < \frac{\epsilon}{3.8}$. Then 
\[
e^{- 73.2 \phi r \cL}   \ll  r^4 \cL  \int_y^x \Big| \sum_{y \leq \N\kp < u} \frac{\chi(\kp) \log \N \kp  }{\N\kp^{1+i\tau}}  \Big|^2 \frac{du}{u}  + \delta(\chi) \mathbf{1}_{\{ |\tau| < 4r \}}(\tau)
\]
where $\phi = 1 + \tfrac{4}{\pi} \epsilon + 16\epsilon^2$ and provided $x,y \geq 1$ satisfy
\begin{equation}
\cL \leq \log y \leq 2.3 \phi \cL \quad \text{ and} \quad 122 \phi \cL \leq \log x \ll \cL.
\end{equation}
\end{prop}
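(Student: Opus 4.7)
The plan is to apply the Tur\'an power-sum method to the reciprocal distances of nearby zeros, convert the resulting power sum to a weighted Dirichlet series over prime powers via \cref{ExplicitFormula_HigherDerivatives}, truncate to prime ideals with $y\leq \N\kp<x$, and finally extract the integrated prime polynomial via Cauchy--Schwarz. Set $s_0=1+i\tau$ and write $b_\rho := (s_0-\rho)^{-1}$. Enumerate the non-trivial zeros of $L(s,\chi)$ inside $|s_0-\rho|<r_0$ as $\rho_1,\ldots,\rho_n$, with $\rho_1$ the zero guaranteed by hypothesis, so $|b_{\rho_1}|\geq r^{-1}$. By \cref{ZerosInCircle-Convexity} the count satisfies $n \leq 2\phi r_0\cL + O(1)$, and zeros outside this disc contribute to the full zero sum a geometric tail bounded by $(r/r_0)^{k}$, negligible provided $r<r_0<\epsilon/3.8$.

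Next I would apply the Tur\'an second main theorem to $\{b_{\rho_1},\ldots,b_{\rho_n}\}$: for any integer $M\geq 0$ there exists $k\in(M,M+n]$ with
\[
\Big|\sum_{j=1}^{n} b_{\rho_j}^{\,k}\Big| \;\geq\; \Big(\frac{cn}{M+n}\Big)^{\!n} |b_{\rho_1}|^{\,M+1}
\]
for an absolute $c>0$. Together with $|b_{\rho_1}|\geq r^{-1}$, this yields a lower bound of size $r^{-k}\exp(-n\log((M+n)/n)+O(n))$ on the truncated power sum. By \cref{ExplicitFormula_HigherDerivatives} applied at $s=s_0$, the full zero sum equals $\delta(\chi)(s_0-1)^{-(k+1)}-\frac{1}{k!}\sum_\kn \Lambda_K(\kn)\chi(\kn)(\log\N\kn)^{k}\N\kn^{-s_0}$; the trivial-zero contribution is controlled via \cref{ExplicitFormula,digamma}, and the pole term produces exactly the indicator $\delta(\chi)\mathbf{1}_{|\tau|<4r}(\tau)$ appearing in the conclusion.

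The remaining step is to convert the resulting weighted Dirichlet sum into the integral of $|A(u;\tau)|^{2}$ on the right-hand side, where $A(u;\tau):=\sum_{y\leq \N\kp<u}\chi(\kp)(\log\N\kp)\N\kp^{-(1+i\tau)}$. Taking $M\asymp\log x\asymp\cL$ concentrates the weight $(\log\N\kn)^{k}$ on $\N\kn$ of logarithmic size comparable to $k$, so that the primes in $[y,x]$ dominate. Contributions from $\N\kp<y$ are suppressed by the condition $\log y\geq \cL$ combined with the largeness of $k$, contributions from $\N\kp\geq x$ are suppressed by the tail of a Mellin-type smoothing kernel, and prime powers contribute $O(1)$ by \cref{lem:PrimeSum}. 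The resulting intermediate inequality has the shape $e^{-73.2\phi r\cL}\ll r^{2}\bigl|\int_{y}^{x}A(u;\tau)\,g(u)\,\tfrac{du}{u}\bigr|+\delta(\chi)\mathbf{1}_{|\tau|<4r}(\tau)$ for a bounded kernel $g$; Cauchy--Schwarz in the measure $du/u$ on $[y,x]$ then converts the linear integral into $\log(x/y)\cdot\int|A|^{2}\,du/u\ll\cL\int|A|^{2}\,du/u$, and squaring the intermediate inequality produces the claimed factor $r^{4}\cL$.

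The main obstacle is the numerical bookkeeping: the constants $73.2$, $2.3$, $122$, and $4$, together with the factor $\phi=1+\tfrac{4}{\pi}\epsilon+16\epsilon^{2}$ inherited from \cref{ZerosInCircle-Convexity}, must all emerge simultaneously from the optimization of the Tur\'an parameter $M$, the auxiliary radius $r_0$, and the truncation bounds $y$ and $x$. In particular, keeping the off-disc zero contributions and the prime-power tails strictly below the target $e^{-73.2\phi r\cL}$, while respecting the prescribed ranges $\cL\leq\log y\leq2.3\phi\cL$ and $122\phi\cL\leq\log x\ll\cL$, is the most delicate quantitative step.
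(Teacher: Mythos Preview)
Your overall strategy---Tur\'an power sum on nearby zeros, identification with a high derivative of $L'/L$ via the explicit formula, truncation to primes in $[y,x]$ by analysing where the weight concentrates, partial summation to produce an integral against a bounded kernel, and then Cauchy--Schwarz---is exactly the route the paper takes (Lemmas \ref{BigDerivative} and \ref{ShortPrimeSum}, combined in \S\ref{ZeroDetector_Proof}).

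There is, however, one concrete gap. You evaluate at $s_0=1+i\tau$, but \cref{ExplicitFormula_HigherDerivatives} requires $\Re\{s\}>1$: the series $\sum_{\kn}\Lambda_K(\kn)\chi(\kn)(\log\N\kn)^k\N\kn^{-s_0}$ you write down is divergent. The paper instead works at $\xi=1+r+i\tau$, which produces the weight
\[
E_k(r\log\N\kn)=\frac{(r\log\N\kn)^k e^{-r\log\N\kn}}{k!}.
\]
The extra factor $\N\kn^{-r}$ is not cosmetic: it is precisely what makes the tail sums over $\N\kp<y$, $\N\kp\geq x$, and prime powers converge and admit the estimates you sketch, and it is why the weight concentrates near $\log\N\kn\approx k/r\asymp\phi\cL$ (the Stirling bounds \eqref{E_k-Bounds}). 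With your unshifted weight $(\log\N\kn)^k$ the contribution of $\N\kp\geq x$ is not suppressed at all. The fix is easy---shift to $\xi$---but it changes the shape of every downstream estimate, and in particular $|b_{\rho_1}|$ becomes $\geq(2r)^{-1}$ rather than $r^{-1}$.

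A second, smaller point: the paper does not truncate the zeros to a fixed disc of radius $r_0$. It introduces a scaling parameter $A$ and keeps the zeros with $|1+i\tau-\rho|\leq Ar$; the constants $73.2$, $2.3$, $122$, and the window $|\tau|<4r$ then emerge from a joint optimization of $A$ (ultimately $A\approx 3.75$) and the Tur\'an ratio $\alpha=N/M$ (ultimately $\alpha=0.15$). Your fixed-$r_0$ truncation would work in principle, but the numerical optimization that delivers these specific constants is tied to the $A$-parametrization, and your tail bound $(r/r_0)^k$ alone does not immediately reproduce them.
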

Weiss \cite[Lemma 4.2]{Weiss} showed a similar estimate but without any explicit constants. As such, the proof of \cref{prop:ZeroDetector}, which is contained in \cref{sec:ZeroDetector}, follows his overall arguments using Tur\'{a}n power sums but with a more careful numerical analysis.   

Combining these two components allows us to establish \cref{LFZD-MainTheorem}. 

\subsubsection*{Proof of \cref{LFZD-MainTheorem} from \cref{thm:LargeSieve} and \cref{prop:ZeroDetector}:}  Without loss, we may assume $H \pmod{\kq}$ is primitive because $Q= Q_H = Q_{H^*}$ and $h_H = h_{H^*}$ if $H^*$ induces $H$. If $n_K=1$ then the desired bound follows from the combined works of Huxley \cite{Huxley} and Jutila \cite{Jutila}.  Hence, we may also assume $n_K \geq 2$.

First, suppose 
\[
\frac{1}{2} \leq \sigma \leq 1 -  \frac{0.05}{4}.
\]
By a naive application of \cite[Lemma 2.1]{LMO}, one can directly verify that for $T \geq 1$,
\begin{equation}
\sum_{\chi(H)=1} N(\sigma, T,\chi) \ll h_H T \log( D_K Q T^{n_K}) \ll ( e^{O(n_K)} D_K^2QT^{n_K})^{81(1-\sigma)} 
\label{eqn:ThickStrip}
\end{equation}
after bounding $h_H$ with \cref{lem:h_H-Bound}.

Now, let $\epsilon \in (0,1/4)$ be fixed and denote $\phi := 1 + \tfrac{4}{\pi} \epsilon + 16\epsilon^2$. Suppose
\begin{equation}
\label{eqn:SigmaCondition}
1- \frac{\epsilon}{4} \leq \sigma < 1. 
\end{equation}
Let $R \geq 1$ be fixed and sufficiently large. By applying the bound in \cref{lem:h_H-Bound} to \cite[Theorem 4.3]{Weiss}, we deduce that for $T \geq 1$,
\begin{equation}
\sum_{\chi(H)=1} N(1-\tfrac{R}{\cL}, T,\chi) \ll 1,
\label{eqn:ThinStrip}
\end{equation}
so it suffices to bound the number of zeros $\rho = \beta+i\gamma$ satisfying
\begin{equation}
\sigma < \beta < 1 - \frac{R}{\cL} \qquad |\gamma| \leq T.
\label{eqn:ZeroRegion}
\end{equation}

Fix $\eta \in (0,1)$ sufficiently small and let $r = (1+\eta)(1-\sigma)$ so by \eqref{eqn:SigmaCondition}, we have $r < \tfrac{\epsilon}{3.8}$.  For each zero $\rho = \beta+i\gamma$ of $L(s,\chi)$ satisfying \eqref{eqn:ZeroRegion}, define
\[
\Phi_{\rho, \chi}(\tau) := \mathbf{1}_{\{ |1+i\tau-\rho| \leq r\}}(\tau)
\]
so by assumption
\[
r^{-1} \int_{-T}^T \Phi_{\rho, \chi}(\tau) d\tau \gg 1.
\]
Select $y = e^{2.3\phi \cL}$ and $x = e^{122\phi \cL}$. By \cref{prop:ZeroDetector}, it follows that
\[
e^{-73.2 \phi r \cL} \ll \int_{-T}^T r^{-1} \Phi_{\rho,\chi}(\tau)  \Big( r^4 \cL  \int_y^x \Big| \sum_{y \leq \N\kp < u} \frac{\chi(\kp) \log \N \kp  }{\N\kp^{1+i\tau}}  \Big|^2  \frac{du}{u}  +  \delta(\chi) \mathbf{1}_{\{|\tau| < 4r\}}(\tau) \Big) d\tau.
\]
Summing over all zeros $\rho$ of $L(s,\chi)$ satisfying \eqref{eqn:ZeroRegion} and using  \eqref{eqn:ThinStrip}, we have that
\begin{equation}
	\label{eqn:ChiZeros}
	\begin{aligned}
	e^{-73.2\phi r\cL} \cdot N(\sigma, T,\chi) 
	& \ll r^4 \cL^2  \int_y^x \Big( \int_{-T}^T  \Big| \sum_{y \leq \N\kp < u} \frac{\chi(\kp) \log \N \kp  }{\N\kp^{1+i\tau}}  \Big|^2 d\tau  \Big) \frac{du}{u}   \\
	& \qquad\qquad +  \delta(\chi) \cL \int_{-T}^T \mathbf{1}_{\{|\tau| < 4r\}}(\tau)   d\tau	+ 1
	\end{aligned}
\end{equation}
since, for $|\tau| \leq T$, 
\[
\sum_{\substack{ \rho \\ L(\rho, \chi) = 0} } \Phi_{\rho,\chi}(\tau) \ll N_{\chi}(r; 1+i\tau) \ll r\cL
\]
by \cref{ZerosInCircle-Classical}. Summing \eqref{eqn:ChiZeros} over $\chi$ satisfying $\chi(H) = 1$, we obtain
\begin{equation}
	\begin{aligned}
	e^{-73.2 \phi r\cL} \cdot \sum_{\chi(H) = 1} N(\sigma, T,\chi) 
	& \ll r^4 \cL^2  \int_y^x \Big(\sum_{\chi(H)=1} \int_{-T}^T  \Big| \sum_{y \leq \N\kp < u} \frac{\chi(\kp) \log \N \kp  }{\N\kp^{1+i\tau}}  \Big|^2 d\tau  \Big) \frac{du}{u}  + r\cL\\
	\end{aligned}
	\label{LFZD-PreLargeSieve}
\end{equation}
Observe that, for $\nu = \nu(\epsilon) > 0$ fixed and sufficiently small, \cref{lem:h_H-Bound} implies
\[
y = e^{2.3\phi \cL} \geq D_K^{4.6\phi} Q^{2.3\phi} T^{2.3 \phi n_K} e^{2.3\phi \Theta n_K} \geq \{ h_H n_K^{1.25 n_K} D_K^{1.5} Q^{0.5} T^{0.5 n_K + 1} \} ^{1+\nu} e^{\Theta n_K}
\]
since $T \geq \max\{ n_K D_K^{-2/n_K} Q^{-3/5n_K}, 1\}$, $n_K \geq 2$, and $\Theta \geq 1$ is sufficiently large. Thus $y$ satisfies the conditions of \cref{thm:LargeSieve}, so the RHS of \eqref{LFZD-PreLargeSieve} is
\begin{equation}
\ll r^4 \cL^2 \int_y^x \frac{1}{\log y} \sum_{y \leq \N\kp < u} \frac{ (\log \N\kp)^2}{\N\kp} \frac{du}{u} + r\cL.
\label{LFZD-Penultimate}
\end{equation}
For the sum over prime ideals, note by \cref{lem:PrimeSum} 
\[
\sum_{y \leq \N\kp < u} \frac{ (\log \N\kp)^2}{\N\kp} \ll (\log u)^2
\]
since $u \geq y  = e^{2.3 \phi\cL} \geq 2D_K$.  Hence, the previous expression is 
\begin{align*}
& \ll r^4 \cL^2 \int_y^x \frac{(\log u)^2}{u \log y}du + r\cL \\
& \ll r^4 \cL^2 \frac{ (\log x)^3 }{\log y} + r\cL \\
& \ll r^4 \cL^4 
\end{align*}
as $\log y \asymp \log x \asymp \cL$. Comparing with \eqref{LFZD-PreLargeSieve} and \eqref{LFZD-Penultimate}, we have shown 
\[
\sum_{\chi(H) = 1} N(\sigma, T,\chi) \ll (r\cL)^4 e^{73.2 \phi r\cL} \ll e^{(73.2 \phi +\eta) (1+\eta) (1-\sigma) \cL} 
\]
as $r = (1+\eta)(1-\sigma)$ and $(r\cL)^4 \ll e^{\eta r\cL}$. In light of \eqref{eqn:ThickStrip} and \eqref{eqn:SigmaCondition}, both cases follow from the respective choices $\epsilon = 0.05$ and $\epsilon = 0.001$ and recalling $\eta$ is fixed and sufficiently small. \qed
%%%%%%%%%%%%%%%%%%%%%%%%%%%%%%%%%%%%%%%%%%%%%%%%%%%%%%%%%%%%%%%%%%%%%%%%%%%%%%%%%%%%
%%%%%%%%%%%%%%%%%%%%%%%%%%%%%%%%%%%%%%%%%%%%%%%%%%%%%%%%%%%%%%%%%%%%%%%%%%%%%%%%%%%%
%%%%%%%%%%%%%%%%%%%%%%%%%%%%%%%%%%%%%%%%%%%%%%%%%%%%%%%%%%%%%%%%%%%%%%%%%%%%%%%%%%%%  
\section{Mean Value of Dirichlet Polynomials}
\label{sec:MeanValue}
%%%%%%%%%%%%%%%%%%%%%%%%%%%%%%%%%%%%%%%%%% 

In \cite{Gallagher}, Gallagher proves a large sieve inequality of the following form.
\begin{thm}
\label{thm:Gallagher}
Let $\{a_n\}$ be a sequence of complex numbers such that $\sum_{n\geq 1}n|a_n|^2<\infty$.  Assume that $a_n=0$ if $n$ has any prime factor less than $R\geq2$.  If $T\geq 1$, then
\[
\sum_{q\leq R}\log\frac{R}{q}~\sideset{}{^*}\sum_{\chi\bmod q}\int_{-T}^{T}\Big|\sum_{n\geq 1}a_n\chi(n)n^{it}\Big|^2 dt\ll \sum_{n\geq 1}(R^2 T+n)|a_n|^2,
\]
where $\sideset{}{^*}\sum$ denotes a restriction of the summation to primitive Dirichlet characters.
\end{thm}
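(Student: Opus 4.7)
The plan is to prove \cref{thm:Gallagher} by combining three classical ingredients: the Gauss sum representation of primitive Dirichlet characters, Gallagher's integral lemma for mean values of Dirichlet polynomials, and the Montgomery--Vaughan additive large sieve. The logarithmic weight $\log(R/q)$ together with the support hypothesis that $a_n = 0$ whenever $n$ has a prime factor at most $R$ strongly suggest a Selberg-sieve style organization of the argument.

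First I would convert the multiplicative inner sum into an additive one: for $\chi \bmod q$ primitive, the Gauss sum identity $\chi(n)\tau(\bar\chi) = \sum_{a} \bar\chi(a) e(an/q)$ with $|\tau(\bar\chi)| = \sqrt{q}$, combined with character orthogonality, yields
\[
\sideset{}{^*}\sum_{\chi \bmod q}\Bigl|\sum_n a_n \chi(n) n^{it}\Bigr|^2 \;\leq\; \frac{\varphi(q)}{q}\sum_{(a,q)=1}\Bigl|\sum_n a_n e(an/q) n^{it}\Bigr|^2.
\]
Next I would dispose of the $t$-integral via Gallagher's lemma
\[
\int_{-T}^T \Bigl|\sum_n b_n n^{it}\Bigr|^2 dt \;\ll\; \int_0^\infty \Bigl|\sum_{x < n \leq x e^{1/T}} b_n\Bigr|^2 \frac{dx}{x},
\]
applied with $b_n = a_n e(an/q)$, thereby reducing matters to a mean-value problem on short windows $I_x = (x, x e^{1/T}]$ of multiplicative length $1/T$. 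The Montgomery--Vaughan additive large sieve
\[
\sum_{q \leq R}\sum_{(a,q)=1}\Bigl|\sum_{n \in I_x} c_n e(an/q)\Bigr|^2 \;\leq\; \bigl(R^2 + |I_x|\bigr)\sum_{n \in I_x}|c_n|^2
\]
then handles each short window. Integrating against $dx/x$ turns the $R^2$ piece into $R^2 T \sum_n |a_n|^2$ (the factor $T$ is reincarnated from the original $t$-interval through Gallagher's lemma), and the $|I_x| \asymp x/T$ piece into $\sum_n n |a_n|^2$, producing both terms on the right-hand side of the asserted inequality.

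The main obstacle is installing the $q$-dependent weight $\log(R/q)$ with the correct constant rather than a uniform $\log R$. To achieve this, I would either write $\log(R/q) = \int_q^R dr/r$ and run the large sieve against the moving parameter $r$ before swapping the orders of summation, or, following Gallagher's original viewpoint, majorize the indicator of the Farey fractions of exact denominator $q$ by a Selberg quadratic form $\bigl(\sum_{d \leq R} \lambda_d\bigr)^2$ whose optimal weights generate the $\log(R/q)$ factor. Here the rough-support hypothesis on $a_n$ is essential: it renders trivial the coprimality constraint $(n,d) = 1$ appearing in the Selberg form, which decouples the diagonal contribution and keeps the additive large sieve constant as $R^2+|I_x|$ rather than inflating it by a divisor-type factor. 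Carrying out this bookkeeping carefully, together with a dyadic decomposition of the $x$-integral to separate the regimes $|I_x|\lesssim 1$ and $|I_x| \gtrsim 1$, is where the technical care is concentrated.
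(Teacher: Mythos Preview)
The paper does not prove \cref{thm:Gallagher}; it is quoted verbatim from Gallagher's paper \cite{Gallagher} and serves only as motivation for the number-field analogue developed in Section~4. So there is no ``paper's own proof'' to compare your sketch against.

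That said, your outline is a faithful reconstruction of Gallagher's original argument: Gauss sums to pass to additive characters, Gallagher's integral lemma to trade the $t$-integral for short-interval sums, and the additive large sieve on each window. Your diagnosis of where the weight $\log(R/q)$ comes from is also correct---it arises precisely because the support condition on $a_n$ lets the Selberg sieve weights pass through without a coprimality obstruction. One small clarification: in Gallagher's actual execution the logarithmic weight is produced directly by inserting the Selberg form $\bigl(\sum_{d\le R,\,d\mid n}\lambda_d\bigr)^2$ into the character sum before applying the large sieve, rather than by the $\int_q^R dr/r$ trick you mention as an alternative; the latter route would only yield a constant weight unless combined with additional structure. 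The dyadic decomposition in $x$ you propose at the end is unnecessary---the $R^2$ and $|I_x|$ contributions integrate directly against $dx/x$ over the support of the short windows without any regime splitting.
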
 

The $\log R/q$ savings, which arises from forcing $a_n=0$ when $n$ has a small prime factor, turns out to be decisive in certain applications, such as Bombieri's proof of \eqref{eqn:Linnik} in \cite{Bombieri}.  The key ingredients in proof of \cref{thm:Gallagher} are the duality argument, properties of Gauss sums, and the fact that the Farey fractions up to height $R$ are $R^{-2}$-well-spaced (cf. \cite[Sections 7.3-7.4]{IK}); apart from the duality argument, sufficiently strong analogues of these results over number fields for the purpose of replacing the Dirichlet characters in \cref{thm:Gallagher} with Hecke characters do not exist yet.  In order to circumvent these deficiencies, we use the Selberg sieve to prove a variant of \cref{thm:Gallagher} where the $\log R/q$ term on the left hand side is translated to a $(\log R)^{-1}$ savings on the right hand side.  The use of the Selberg sieve introduces several sums over integral ideals whose evaluation requires smoothing.  Ultimately, this introduces the factor of $n_K$ the lower bound for $T$ in \cref{LFZD-MainTheorem}.

\subsection{Preparing for the Selberg Sieve} To apply the Selberg sieve, we will require several weighted estimates involving Hecke characters. Before we begin, we highlight the necessary properties of our weight $\Psi$. 
 
\begin{lem} \label{lem:WeightFunction}
	For $T \geq 1$, let $A = T \sqrt{2n_K}$. There exists a weight function $\Psi(x) \in C_c((0,\infty))$ with Mellin transform $\widehat{\Psi}(s)$ such that:
	\begin{enumerate}[(i)]
		\item $0 \leq \Psi(x) \leq A/2$ and $\Psi(x)$ vanishes outside the interval
				\[ 
				 e^{-2n_K/A} \leq x \leq e^{2n_K/A}
				\]
		\item $\widehat{\Psi}(s)$ is an entire function and further $\widehat{\Psi}(s) = \Big[ \frac{\sinh(s/A)}{s/A} \Big]^{2n_K}$. 
		\item For all complex $s = \sigma+it$, 
			\[
				|\widehat{\Psi}(s)| \leq \Big( \frac{A}{|s|} \Big)^{2n_K} e^{|\sigma|/A}.
			\]
		\item For $|s| \leq A$,
			\[
			|\widehat{\Psi}(s)| \leq \Big(1 + \frac{|s|^2}{5A^2}\Big)^{2n_K}. 
			\]
		\item Uniformly for $|\sigma| \leq A/\sqrt{2n_K}$, 
			\[
			    |\widehat{\Psi}(s)| \ll 1.
			\]
		\item Let $\{b_m\}_{m \geq 1}$ be a sequence of complex numbers with $\sum_m |b_m| < \infty$. Then
			\[
			\int_{-T}^T \Big| \sum_m b_m m^{-it} \Big|^2 dt \ll \int_0^{\infty} \Big| \sum_m b_m \Psi\big( \frac{x}{m} \big) \Big|^2 \frac{dx}{x}
			\]
	\end{enumerate}
\end{lem}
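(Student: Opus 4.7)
The plan is to take $\Psi$ to be a $2n_K$-fold multiplicative convolution of a scaled indicator function. Set $\psi_0(x) := (A/2)\,\mathbf{1}_{[e^{-1/A},\, e^{1/A}]}(x)$; the substitution $x = e^u$ shows $\widehat{\psi_0}(s) = \sinh(s/A)/(s/A)$. Define $\Psi$ to be $\psi_0$ convolved with itself $2n_K$ times under the Mellin convolution $(f \ast g)(x) := \int_0^\infty f(x/y)\, g(y)\, dy/y$. Since Mellin convolution multiplies Mellin transforms, property (ii) is immediate. Property (i) will follow from two routine facts: the support of an $n$-fold Mellin convolution is contained in the Minkowski sum of the factor supports on the logarithmic scale, so $\supp \Psi \subseteq [e^{-2n_K/A}, e^{2n_K/A}]$; and $\int_0^\infty \psi_0(y)\, dy/y = 1$ combined with Young's inequality $\|\psi_0 \ast g\|_\infty \leq \|\psi_0\|_\infty \|g\|_{L^1(dy/y)}$ gives $\|\Psi\|_\infty \leq A/2$ by induction.

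The decay bounds (iii) and (iv) are elementary consequences of the formula for $\widehat{\Psi}$: for (iii), use $|\sinh(s/A)| \leq \cosh(\sigma/A) \leq e^{|\sigma|/A}$; for (iv), expand $\sinh(z)/z$ as a power series in $z^2$ and majorize the tail by a geometric series, yielding $|\sinh(z)/z| \leq 1 + |z|^2/5$ for $|z| \leq 1$ and hence $|\widehat{\Psi}(s)| \leq (1 + |s|^2/(5A^2))^{2n_K}$ when $|s| \leq A$.

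Property (v) is the step I expect to be the main obstacle, since the naive estimate from (iv) loses a fatal factor $e^{O(n_K)}$ on the critical strip. My plan is to use the identity $|\sinh(\sigma + it)|^2 = \sinh^2(\sigma) + \sin^2(t)$ together with the elementary inequality $(p+q)/(r+s) \leq \max(p/r,\, q/s)$ to obtain
\[
\left| \frac{\sinh(s/A)}{s/A} \right|^2 \leq \max\!\left( \frac{\sinh^2(\sigma/A)}{(\sigma/A)^2},\; \frac{\sin^2(t/A)}{(t/A)^2} \right).
\]
The second ratio is at most $1$, and the first is bounded by $\cosh^2(\sigma/A) \leq e^{(\sigma/A)^2}$ via the Taylor-coefficient comparisons $\sinh(x)/x \leq \cosh(x) \leq e^{x^2/2}$. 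In the range $|\sigma|/A \leq 1/\sqrt{2n_K}$ one concludes $|\widehat{\Psi}(s)|^2 \leq e^{2n_K(\sigma/A)^2} \leq e$, which proves (v).

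For the large-sieve-type inequality (vi) I would let $G(x) := \sum_m b_m \Psi(x/m)$ and note that the Mellin transform factors as $\widehat{G}(s) = \widehat{\Psi}(s)\, \sum_m b_m m^s$. Mellin--Plancherel (the ordinary Plancherel theorem after $x = e^u$) then gives
\[
\int_0^\infty |G(x)|^2\, \frac{dx}{x} = \frac{1}{2\pi} \int_{-\infty}^\infty \Big|\sum_m b_m m^{it}\Big|^2 |\widehat{\Psi}(it)|^2\, dt.
\]
On the critical line $\widehat{\Psi}(it) = (\sin(t/A)/(t/A))^{2n_K}$; for $|t| \leq T$ the constraint $|t/A| \leq 1/\sqrt{2n_K}$ together with $\sin(x)/x \geq 1 - x^2/6$ yields $\widehat{\Psi}(it) \geq (1 - 1/(12n_K))^{2n_K} \gg 1$. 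Restricting the integral to $|t| \leq T$ and using $|\sum_m b_m m^{-it}| = |\sum_m b_m m^{it}|$ then completes (vi).
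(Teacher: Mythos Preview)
Your construction is exactly the one the paper has in mind: the paper's ``proof'' is simply a citation to Weiss \cite[Lemma 3.2 and Corollary 3.3]{Weiss}, and Weiss's function $H_{2n_K}$ is precisely the $2n_K$-fold Mellin self-convolution of $\tfrac{A}{2}\mathbf{1}_{[e^{-1/A},e^{1/A}]}$ that you wrote down. So the approach is the same, and your verifications of (i), (ii), (iv), (v), (vi) are correct and more detailed than anything in the paper itself.

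Two small points. For (iii), your bound $|\sinh(s/A)|\le\cosh(\sigma/A)\le e^{|\sigma|/A}$ is correct, but after raising to the $2n_K$-th power it yields $|\widehat{\Psi}(s)|\le (A/|s|)^{2n_K}e^{2n_K|\sigma|/A}$, not $e^{|\sigma|/A}$ as printed in the lemma; the printed exponent appears to be a typo inherited from transcription, and the extra $2n_K$ is harmless in the paper's applications (where $|\sigma|\le 1$ and the factor is absorbed into $e^{O(n_K)}$). For (vi), the final pointwise claim $|\sum_m b_m m^{-it}|=|\sum_m b_m m^{it}|$ is false for general complex $b_m$; what you actually need (and what suffices) is the change of variables $t\mapsto -t$ in the integral over the symmetric interval $[-T,T]$.
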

\begin{proof}
	See \cite[Lemma 3.2 and Corollary 3.3]{Weiss}; in his notation, $\Psi(x) = H_{2n_K}(x)$ with parameter $A = T\sqrt{2n_K}$. 
\end{proof}
For the remainder of this section, assume:
  \begin{itemize}
  	\item $H \pmod{\kq}$ is an arbitrary \emph{primitive} congruence class group of $K$.
 	\vspace{1mm}
  	\item $0 < \epsilon < 1/2$ and $T \geq 1$ is arbitrary. 
  	\vspace{1mm}
  	\item $\Psi$ is the weight function of \cref{lem:WeightFunction}. 
  \end{itemize}
Next, we establish improved analogues of \cite[Lemmas 3.4 and 3.6 and Corollary 3.5]{Weiss}.

\begin{lem}
\label{lem:smoothed}  Let $\chi \pmod{\kq}$ be a Hecke character satisfying $\chi(H) = 1$.   For $x > 0$,
\[
\left|\sum_{\kn}\frac{\chi(\kn)}{\N\kn}\cdot \Psi\Big(\frac{x}{\N\kn}\Big)-\delta(\chi)\frac{\varphi(\kq)}{\N\kq}\kappa_K\right|
\leq e^{O_{{\epsilon}}(n_K)} \cdot \big\{ n_K^{\frac{n_K}{4}} D_K^{\tfrac{1}{2}} Q^{\tfrac{1}{2}} T^{\tfrac{n_K}{2}+1} \big\}^{1+\epsilon}
\]
\end{lem}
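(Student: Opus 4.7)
The plan is to evaluate the smoothed sum by Mellin inversion followed by a contour shift, with the main term $\delta(\chi)\varphi(\kq)\kappa_K/\N\kq$ arising as a residue. Using property (ii) of \cref{lem:WeightFunction} and a line $\Re(s)=c<0$ on which the Dirichlet series $L(1-s,\chi)=\sum_\kn\chi(\kn)\N\kn^{s-1}$ converges absolutely, Fubini gives
\[
\sum_\kn\frac{\chi(\kn)}{\N\kn}\Psi\Big(\frac{x}{\N\kn}\Big) = \frac{1}{2\pi i}\int_{(c)}\widehat{\Psi}(s)x^{-s}L(1-s,\chi)\,ds.
\]
I would decompose $L(1-s,\chi)=L(1-s,\chi^*)E_\chi(1-s)$, with $E_\chi(w)=\prod_{\kp\mid\kq,\,\kp\nmid\kf_\chi}(1-\chi^*(\kp)\N\kp^{-w})$ handling the imprimitivity, and then shift the contour to $\Re(s)=c':=1$. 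The only pole crossed lies at $s=0$, coming from $\zeta_K(1-s)$ when $\chi$ is trivial, with residue $-\kappa_K\varphi(\kq)/\N\kq$; horizontal pieces vanish by the rapid decay of $\widehat{\Psi}$ at height from property (iii) against the polynomial growth of $L$. Hence
\[
\sum_\kn\frac{\chi(\kn)}{\N\kn}\Psi\Big(\frac{x}{\N\kn}\Big) - \delta(\chi)\frac{\varphi(\kq)}{\N\kq}\kappa_K = \frac{1}{2\pi i}\int_{(1)}\widehat{\Psi}(s)x^{-s}L(1-s,\chi)\,ds.
\]

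To bound the shifted integral, property (i) ensures the sum vanishes unless $x\geq e^{-2n_K/A}$ with $A=T\sqrt{2n_K}$, so $|x^{-1}|\leq e^{O(n_K)}$ in the nontrivial range. Applying \cref{Rademacher} to $L(1-s,\chi^*)$ on $\Re(s)=1$ with $\eta=\epsilon$ yields
\[
|L(1-s,\chi^*)|\ll e^{O_\epsilon(n_K)}D_\chi^{(1+\epsilon)/2}(3+|t|)^{n_K(1+\epsilon)/2},
\]
where the crucial observation is that the singular prefactor $|(1+(1-s))/(1-(1-s))|^{\delta(\chi)}=|(1-it)/(1+it)|^{\delta(\chi)}\equiv 1$ along this contour, eliminating the factor of $T$ that would otherwise arise for trivial $\chi$. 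The correction factor satisfies $|E_\chi(1-s)|\leq 2^{\omega(\kq)}\leq e^{O_\epsilon(n_K)}(D_KQ)^{O(\epsilon)}$ via the bound $\omega(\kf_H)\ll O_\epsilon(n_K)+\epsilon\log(D_KQ)$ of \cite[Lemma 1.13]{Weiss}. For $|\widehat{\Psi}|$ I would combine property (v), which gives $|\widehat{\Psi}(1+it)|\ll 1$ for all $t$ (valid since $c'=1\leq A/\sqrt{2n_K}=T$), with property (iii), which gives the rapid decay $(A/|t|)^{2n_K}$ for $|t|>A$. Splitting the $t$-integral at $|t|=A$ produces a total bounded by $e^{O(n_K)}A^{n_K(1+\epsilon)/2+1}$; substituting $A=T\sqrt{2n_K}$ and $D_\chi\leq D_KQ$ yields $e^{O_\epsilon(n_K)}\{n_K^{n_K/4}D_K^{1/2}Q^{1/2}T^{n_K/2+1}\}^{1+O(\epsilon)}$, which upon renaming $\epsilon$ matches the claim.

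The main technical subtlety is the choice of the shifted contour. A line further to the left of $\Re(s)=1$ weakens Rademacher below the required $D_\chi^{1/2}$, while a line further to the right forces the Euler correction $E_\chi(1-s)$ (now evaluated at $\Re(1-s)<0$) to grow like a power of $\N\kq$, which cannot in general be absorbed into $Q^{1/2+\epsilon}$ since $\N\kf_H$ can far exceed $Q$. The choice $c'=1$ threads both needles: the Rademacher prefactor is identically $1$ (saving a factor of $T$), and $|E_\chi(-it)|$ is bounded uniformly by $2^{\omega(\kq)}$, which fits neatly into the permitted $e^{O_\epsilon(n_K)}$ budget via the $\omega(\kf_H)$ estimate above.
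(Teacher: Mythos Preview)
Your argument is essentially the paper's own proof: Mellin inversion and a shift to the same contour (your line $\Re s=1$ is the paper's $\Re s=-1$ after the substitution $s\mapsto-s$, using that $\widehat\Psi$ is even), Rademacher applied on $\Re=0$ together with the imprimitivity factor $2^{\omega(\kq)}$ controlled via \cite[Lemma~1.13]{Weiss}, and the same splitting of the $t$-integral (you invoke property~(v) where the paper uses~(iv), a cosmetic difference). One remark: the paper's proof in fact retains the factor $x^{-1}$ throughout---and the subsequent Corollary carries a $1/x$---so the stated lemma is evidently missing a $1/x$ on the right; your device of absorbing $x^{-1}$ via the support of $\Psi$ covers $x\ge e^{-2n_K/A}$ but would still need a separate (easy) bound on $\kappa_K$ to handle the degenerate range $0<x<e^{-2n_K/A}$, where the sum vanishes but the residue term need not.
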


\begin{proof}
We have
\[
\sum_{\kn}\frac{\chi(\kn)}{\N\kn} \cdot \Psi\Big(\frac{x}{\N\kn}\Big)-\delta(\chi)\frac{\varphi(\kq)}{\N\kq}\kappa_K=\frac{1}{2\pi i}\int_{-1-i\infty}^{-1+i\infty}L(s+1,\chi)\widehat{\Psi}(s)x^s ds.
\]
If $\chi\pmod{\kq}$ is induced by the primitive character $\chi^* \pmod{\mathfrak{f}_{\chi}}$, then
\[
L(s,\chi)=L(s,\chi^*)\prod_{\substack{\kp\mid \kq \\ \kp\nmid\mathfrak{f}_{\chi}}}(1-\chi^*(\kp)\N\kp^{-s}) 
\]
implying 
\[
|L(it,\chi)|\leq 2^{\omega(\kq)}|L(it,\chi^*)|
\]
where $\omega(\kq)$ is the number of distinct prime ideal divisors of $\kq$. Since $H \pmod{\kq}$ is primitive,  
\[
\omega(\kq)\leq 6e^{4/\epsilon}n_K+\tfrac{\epsilon}{2} \log(D_K Q),
\]
by \cite[Lemma 1.13]{Weiss}. Hence, for $\Re\{s\} = -1$,
\[
|L(s+1,\chi)| \ll e^{O_{\epsilon}(n_K)} (D_K Q)^{\epsilon/2} |L(s+1,\chi^*)|
\]
Thus, by \cref{Rademacher}, we have 
\begin{align*}
&\left|\frac{1}{2\pi i}\int_{-1-i\infty}^{-1+i\infty}L(s+1, \chi )\widehat{\Psi}(s)x^s ds\right|\\
&\ll e^{O_{\epsilon}(n_K)} (D_K Q)^{\tfrac{1}{2}+\epsilon} x^{-1}\int_{0}^{\infty}(1+|t|)^{(\frac{1}{2}+\epsilon)n_K}|\widehat{\Psi}(-1+it)|dt
\end{align*}
as $D_{\chi} \leq D_K Q$. By \cref{lem:WeightFunction}(iii) and (iv), it follows that
\begin{align*}
&\int_{0}^{\infty}(1+|t|)^{(\frac{1}{2}+\epsilon)n_K}|\widehat{\Psi}(-1+it)|dt\\
&=\int_{0}^{\frac{A}{2}}(1+|t|)^{(\frac{1}{2}+\epsilon)n_K}|\widehat{\Psi}(-1+it)|dt+\int_{\frac{A}{2}}^{\infty}(1+|t|)^{(\frac{1}{2}+\epsilon)n_K}|\widehat{\Psi}(-1+it)|dt\\
&\ll e^{O(n_K)}A^{(\frac{1}{2}+\epsilon)n_K+1}.
\end{align*}
Collecting the above estimates, the claimed bound follows upon recalling $A = T\sqrt{2n_K}$. 
\end{proof}

\begin{cor}
Let $C$ be a coset of the primitive congruence class group $H\pmod{\kq}$, and let $\kd$ be an integral ideal coprime to $\kq$. For all $x > 0$, we have
\[
\left|\sum_{\substack{\kn\in C \\ \kd \mid\kn}}\frac{1}{\N\kn}\Psi\Big(\frac{x}{\N\kn}\Big)-\frac{\varphi(\kq)}{\N\kq}\frac{\kappa_K}{h_H} \cdot \frac{1}{\N\kd}\right|\leq e^{O_{{\epsilon}}(n_K)} \cdot \big\{ n_K^{\frac{n_K}{4}} D_K^{\tfrac{1}{2}} Q^{\tfrac{1}{2}} T^{\tfrac{n_K}{2}+1} \big\}^{1+\epsilon} \cdot \frac{1}{x}.
\]
\end{cor}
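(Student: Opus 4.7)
The plan is to reduce the restricted coset sum on the left-hand side to a sum of character sums of the type handled by \cref{lem:smoothed}, via orthogonality of characters on $I(\kq)/H$. First, recall that since $H \pmod{\kq}$ is primitive of index $h_H$ in $I(\kq)$, the dual group of $I(\kq)/H$ consists precisely of the $h_H$ Hecke characters $\chi \pmod{\kq}$ satisfying $\chi(H)=1$, and for any coset $C$ of $H$ and any integral ideal $\kn$ coprime to $\kq$,
\[
\mathbf{1}_{\kn \in C} \;=\; \frac{1}{h_H}\sum_{\chi(H)=1}\bar{\chi}(C)\,\chi(\kn).
\]

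Next, I would handle the divisibility condition by factoring $\kn = \kd\km$. Because $(\kd,\kq)=1$, this gives a bijection between the ideals $\kn$ counted on the left-hand side and integral ideals $\km$ coprime to $\kq$ satisfying $\km \in C\,\kd^{-1}$ (as cosets of $H$), and moreover $\chi(\kn) = \chi(\kd)\chi(\km)$ for any $\chi \pmod{\kq}$. Combining this with the orthogonality identity yields
\[
\sum_{\substack{\kn\in C\\ \kd\mid\kn}}\frac{1}{\N\kn}\Psi\!\Big(\frac{x}{\N\kn}\Big)
\;=\; \frac{1}{\N\kd\cdot h_H}\sum_{\chi(H)=1}\bar{\chi}(C)\,\chi(\kd)\sum_{\km}\frac{\chi(\km)}{\N\km}\,\Psi\!\Big(\frac{x/\N\kd}{\N\km}\Big).
\]

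Now I apply \cref{lem:smoothed} to each inner sum, with the parameter $x$ replaced by $x/\N\kd$. For the trivial character $\chi_0$ we have $\bar{\chi_0}(C)\chi_0(\kd)=1$, and the main term $\delta(\chi)\frac{\varphi(\kq)}{\N\kq}\kappa_K$ contributes exactly
\[
\frac{1}{\N\kd\cdot h_H}\cdot\frac{\varphi(\kq)}{\N\kq}\kappa_K \;=\; \frac{\varphi(\kq)}{\N\kq}\cdot\frac{\kappa_K}{h_H}\cdot\frac{1}{\N\kd},
\]
matching the asserted main term. The error from each $\chi$ is the quantity bounded in \cref{lem:smoothed} (whose proof exhibits the explicit $x^{-1}$ factor), which after the rescaling $x \mapsto x/\N\kd$ contributes a factor $\N\kd/x$. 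Using $|\bar{\chi}(C)\chi(\kd)|\le 1$ and summing over the $h_H$ characters, the total error is
\[
\le \frac{1}{\N\kd\cdot h_H}\cdot h_H\cdot e^{O_\epsilon(n_K)}\!\big\{n_K^{n_K/4}D_K^{1/2}Q^{1/2}T^{n_K/2+1}\big\}^{1+\epsilon}\cdot\frac{\N\kd}{x},
\]
and the $\N\kd$ factors cancel to leave precisely the claimed bound with a single factor of $1/x$. There is no substantive obstacle here: the proof is bookkeeping, with the cancellation of $\N\kd$ being the only point deserving a moment of attention, and the whole argument is a standard orthogonality-plus-factor-out-$\kd$ manoeuvre piggybacking on \cref{lem:smoothed}.
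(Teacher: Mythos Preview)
Your argument is correct and is precisely the standard orthogonality-plus-factor-out-$\kd$ computation that the paper has in mind; the paper itself does not spell it out but simply defers to Weiss \cite[Corollary 3.5]{Weiss}, noting only that the improved input from \cref{lem:smoothed} is used. Your observation that the proof of \cref{lem:smoothed} actually yields the bound with an explicit factor $x^{-1}$ (apparently omitted from the lemma's displayed statement) is exactly what is needed to get the $1/x$ in the corollary after the rescaling $x\mapsto x/\N\kd$, and the cancellation of both $\N\kd$ and $h_H$ that you highlight is the only nontrivial bookkeeping.
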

\begin{proof}
The proof is essentially the same as that of \cite[Corollary 3.5]{Weiss}, except for the fact that we have an improved bound in Lemma \ref{lem:smoothed}.
\end{proof}

We now apply the Selberg sieve.  For $z\geq 1$, define
\begin{equation}
S_z=\{\kn:\kp\mid\kn\implies\N\kp>z\} \qquad \text{and} \qquad V(z)=\sum_{\N\kn\leq z}\frac{1}{\N\kn}.	
\end{equation}

\begin{lem}
\label{lem:3.3}
Let $C$ be a coset of the primitive congruence class group $H\pmod{\kq}$.  For $x > 0$ and $z \geq 1$,  
\[
\sum_{\kn\in C\cap S_z}\frac{1}{\N\kn} \Psi\Big(\frac{x}{\N\kn}\Big)\leq \frac{\kappa_K}{h_H V(z)}+O\Big( \frac{M z^{2+2\epsilon}}{x} \Big),
\]
where
\begin{equation}
M=e^{O_{{\epsilon}}(n_K)} \cdot \big\{ n_K^{\frac{n_K}{4}} D_K^{\tfrac{1}{2}} Q^{\tfrac{1}{2}} T^{\tfrac{n_K}{2}+1} \big\}^{1+\epsilon}.
\label{def:M}
\end{equation}
\end{lem}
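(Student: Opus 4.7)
The plan is to apply an upper-bound Selberg sieve on integral ideals of $K$. Pick real weights $\lambda_{\kd}$ supported on squarefree ideals $\kd$ coprime to $\kq$ with $\N\kd \leq z$, normalized by $\lambda_{(1)} = 1$. Since any $\kn \in S_z$ has no nontrivial divisor of norm $\leq z$, the identity $\sum_{\kd \mid \kn}\lambda_{\kd} = \lambda_{(1)} = 1$ holds on $S_z$, giving the majorant $\mathbf{1}_{S_z}(\kn) \leq \bigl(\sum_{\kd \mid \kn}\lambda_{\kd}\bigr)^2$. Substituting, expanding the square, and swapping summations bounds the left-hand side of the lemma by
\[
\sum_{\kd_1, \kd_2}\lambda_{\kd_1}\lambda_{\kd_2} \sum_{\substack{\kn \in C \\ [\kd_1,\kd_2] \mid \kn}} \frac{1}{\N\kn}\Psi\Big(\frac{x}{\N\kn}\Big).
\]
Each $[\kd_1,\kd_2]$ is squarefree and coprime to $\kq$, so the corollary to Lemma \ref{lem:smoothed} replaces the inner sum by $\frac{\varphi(\kq)\kappa_K}{\N\kq\, h_H\, \N[\kd_1,\kd_2]}$ with an error $O(M/x)$ per pair, splitting the bound into a main-term quadratic form plus a total error of order $\frac{M}{x}\bigl(\sum_{\kd}|\lambda_{\kd}|\bigr)^2$.

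To optimize the main term I will run the standard Selberg maneuver: $\N[\kd_1,\kd_2] = \N\kd_1\N\kd_2/\N(\kd_1,\kd_2)$ together with the convolution identity $\N\ka = \sum_{\ke \mid \ka}\varphi(\ke)$ recasts the form as $\sum_{\ke}\varphi(\ke)\, y_{\ke}^2$ with $y_{\ke} := \sum_{\ke \mid \kd}\lambda_{\kd}/\N\kd$, subject to the Möbius-dual linear constraint $\sum_{\ke}\mu_K(\ke)y_{\ke} = 1$. Cauchy--Schwarz then gives a minimum of $1/G(z)$, where
\[
G(z) := \sum_{\substack{\ke\text{ squarefree} \\ (\ke,\kq)=1,\ \N\ke\leq z}} \frac{1}{\varphi(\ke)},
\]
attained at $y_{\ke} = \mu_K(\ke)/(\varphi(\ke)G(z))$. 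To recover $V(z)$ from $G(z)$, I expand $1/\varphi(\ke) = \prod_{\kp\mid\ke}\sum_{j\geq 1}\N\kp^{-j}$ for squarefree $\ke$ to rewrite $G(z) = \sum_{(\kn,\kq)=1,\ \N\mathrm{rad}(\kn)\leq z}1/\N\kn$. Since $\N\mathrm{rad}(\kn)\leq \N\kn$, this dominates $\sum_{(\kn,\kq)=1,\ \N\kn\leq z}1/\N\kn$; splitting a general ideal into its $\kq$-part and coprime-to-$\kq$ part and using $\prod_{\kp\mid\kq}(1-\N\kp^{-1})^{-1} = \N\kq/\varphi(\kq)$ yields $G(z)\geq \frac{\varphi(\kq)}{\N\kq}V(z)$, which collapses the main term to $\kappa_K/(h_H V(z))$.

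For the error, the classical bound $|\lambda_{\kd}|\leq 1$ for the optimal Selberg weights follows from splitting $\ke = \kd\kf$ in the Möbius-inversion formula for $\lambda_{\kd}$ and using $\sum_{\ke_1 \mid \kd}1/\varphi(\ke_1) = \N\kd/\varphi(\kd)$. Hence $\sum_{\kd}|\lambda_{\kd}|$ is at most the number of squarefree integral ideals of norm at most $z$, which is $\ll z^{1+\epsilon}$ after absorbing $n_K$- and $D_K$-dependent factors into $M$. Squaring produces the stated $O(Mz^{2+2\epsilon}/x)$ error. The main obstacle is the inequality $G(z)\geq \frac{\varphi(\kq)}{\N\kq}V(z)$, where one must be careful with the squarefree and coprimality constraints while unfolding $1/\varphi(\ke)$; once that is in hand, the rest is a careful but routine execution of the Selberg sieve over number fields.
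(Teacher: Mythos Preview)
Your proof is correct and follows essentially the same Selberg-sieve argument that the paper invokes (the paper itself just cites \cite[Lemma 3.6]{Weiss}, noting that the only change is the improved error bound from the corollary to \cref{lem:smoothed}); your write-up supplies the details that the paper omits. One small remark: your claim that the number of squarefree ideals of norm at most $z$ is $\ll z^{1+\epsilon}$ with the implied constant absorbable into $M$ is justified cleanly by Rankin's trick, since $\sum_{\N\kd\le z}1\le z^{1+\epsilon}\zeta_K(1+\epsilon)\le z^{1+\epsilon}\zeta_{\Q}(1+\epsilon)^{n_K}=e^{O_\epsilon(n_K)}z^{1+\epsilon}$, so no $D_K$-dependence actually arises there.
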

\begin{proof}
The proof is essentially the same as that of \cite[Lemma 3.6]{Weiss}, except for the fact that we have an improved bound in Lemma \ref{lem:smoothed}.
\end{proof}
\subsection{Proof of \cref{thm:LargeSieve}}
Let $z$ be a parameter satisfying $1 \leq z \leq y$, which we will specify later. Applying \cref{lem:WeightFunction} and writing
\[
b_m=\sum_{\N\kn=m}b(\kn)\chi(\kn),
\]
for each Hecke character $\chi$ satisfying $\chi(H) = 1$, it follows that
\[
\sum_{\chi(H)=1}\int_{-T}^{T}\left|\sum_{\kn}b(\kn)\chi(\kn)\N\kn^{-it}\right|^2 dt\ll\int_0^{\infty}\sum_{\chi(H)=1}\left|\sum_{\kn}b(\kn)\chi(\kn)\Psi\Big(\frac{x}{\N\kn}\Big)\right|^2\frac{dx}{x}.
\]
 By the orthogonality of characters and the Cauchy-Schwarz inequality,
\[
 \sum_{\chi(H)=1}\left|\sum_{\kn}b(\kn)\chi(\kn) \Psi\Big(\frac{x}{\N\kn}\Big)\right|^2 \leq
  h_H \sum_{C \in I(\kq)/H} \Big( \sum_{\kn\in C}\N\kn |b(\kn)|^2 \Psi \left(\frac{x}{\N\kn}\right) \Big) \Big(\sum_{\kn\in C\cap S_z}\frac{1}{\N\kn} \Psi\Big(\frac{x}{\N\kn}\Big) \Big)
\]
since $z \leq y$ and $b(\kn)$ is supported on prime ideals with norm greater than $y$. 
By Lemma \ref{lem:3.3}, the RHS is
\begin{align*}
& \leq  \sum_{C \in I(\kq)/H} \sum_{\kn\in C}\N\kn |b(\kn)|^2 \Psi \left(\frac{x}{\N\kn}\right)\Big( \frac{\kappa_K}{V(z)}+ \frac{h_H M'}{x} \Big)  \\
& \leq \sum_{\kn} \N\kn|b(\kn)|^2 \Psi\Big(\frac{x}{\N\kn}\Big) \Big( \frac{\kappa_K}{V(z)}+ \frac{h_H M'}{x} \Big),
\end{align*}
where $M' = M z^{2+2\epsilon}$. 
Combining the above estimates yields
\begin{align*}
&\sum_{\chi(H)=1}\int_{-T}^{T}\left|\sum_{\kp}b(\kp)\chi(\kp)\N\kp^{-it}\right|^2 dt\\
\ll~&\sum_{\kn} \N\kn |b(\kn)|^2\left(\frac{\kappa_K}{V(z)}\int_{0}^{\infty} \Psi\Big(\frac{x}{\N\kn}\Big)\frac{dx}{x}+h_H M'\int_{0}^{\infty}\frac{1}{x}\Psi\Big(\frac{x}{\N\kn}\Big)\frac{dx}{x}\right)\\
\ll~&\sum_{\kn} \N\kn |b(\kn)|^2\left(\frac{\kappa_K}{V(z)}|\widehat{\Psi}(0)|+\frac{h_H M'}{\N\kn} |\widehat{\Psi}(1)|\right)\\
\ll~&\sum_{\kn} \N\kn|b(\kn)|^2\left(\frac{\kappa_K}{V(z)} + \frac{h_H M' e^{O(n_K)}}{\N\kn} \right). 
\end{align*}
by \cref{lem:WeightFunction}. 
Since $b(\kn)$ is supported on prime ideals whose norm is greater than $y$, the above is
\begin{equation}
\ll \sum_{\kp} \N\kp |b(\kp)|^2\left(\frac{\kappa_K}{V(z)} + \frac{h_H M z^{2+2\epsilon} e^{O_{\epsilon}(n_K)}}{y} \right)
\label{eqn:LargeSieve_Penultimate}
\end{equation}
as $M' = M z^{2+2\epsilon}$ with $M$ defined by \eqref{def:M}. Now, select $z$ satisfying
\begin{equation}
y = h_H M e^{B_1 n_K} \cdot  z^{2+4\epsilon}, 
\label{eqn:LargeSieve_zChoice}
\end{equation}
where $B_1 = B_1(\epsilon) > 0$ is sufficiently large. From \eqref{eqn:LargeSieve_yRange}, it follows that $1 \leq z \leq y$ and further,
\[
z \geq e^{B_2 n_K} (n_K^{n_K} D_K)^{\tfrac{1}{2}+\epsilon}
\]
where $B_2 = B_2(\epsilon) > 0$ is sufficiently large. Hence, after inputting this choice of $z$ into \eqref{eqn:LargeSieve_Penultimate}, it follows by \cref{cor:harmonic_sum} that
\begin{align*}
\sum_{\chi(H)=1}\int_{-T}^{T}\left|\sum_{\kp}b(\kp)\chi(\kp)\N\kp^{-it}\right|^2 dt
\ll_{\epsilon}  \frac{1}{\log z} \sum_{\kp} \N\kp |b(\kp)|^2.
\end{align*}
Finally, from \eqref{eqn:LargeSieve_yRange} and \eqref{eqn:LargeSieve_zChoice}, one can verify that $\log z \gg_{\epsilon} \log y$ which completes the proof after rescaling $\epsilon > 0$ appropriately. 
\hfill \qed

%%%%%%%%%%%%%%%%%%%%%%%%%%%%%%%%%%%%%%%%%%%%%%%%%%%%%%%%%%%%%%%%%%%%%%
%%%%%%%%%%%%%%%%%%%%%%%%%%%%%%%%%%%%%%%%%%%%%%%%%%%%%%%%%%%%%%%%%%%%%%
\section{Detecting the Zeros of Hecke $L$-functions}
\label{sec:ZeroDetector}
\subsection{Setup}

The objective of this section is to prove \cref{prop:ZeroDetector} so we fix some notation to be used throughout this section. Let $H \pmod{\kq}$ be a congruence class group and let $\chi \pmod{\kq}$ be a Hecke character, satisfying $\chi(H) = 1$,  induced from the primitive character $\chi^* \pmod{\kf_{\chi}}$. Define $Q=Q_H$ by \eqref{def:Q}, and for $T \geq 1$, 
\begin{equation}
\cL := 2 \log D_K + \log Q + n_K \log(T+3) + \Theta n_K
\label{def:cL}
\end{equation}
where $\Theta \geq 1$ is sufficiently large. Let $R \geq 1$ be sufficiently large and $0 < r_0 < \tfrac{1}{16}$.  Suppose $\tau \in \R$ and $r > 0$ satisfy
\begin{equation}
\frac{R}{\cL} \leq r < r_0 \quad \text{and} \quad  |\tau| \leq T. 
\end{equation}
Assume $L(s,\chi)$ has a non-trivial zero $\rho$ satisfying
\begin{equation}
|1+i\tau-\rho| \leq r
\label{DetectedZero}. 
\end{equation}

The proof of \cref{prop:ZeroDetector} is divided into two main steps, with the final arguments culminating in \cref{ZeroDetector_Proof}.  The final arguments critically hinge on the following power sum estimate due to Kolesnik and Straus \cite{Kolesnik-Straus}. 
\begin{thm}
\label{KS-PowerSum} For any integer $M \geq 0$ and complex numbers $z_1,\dots,z_N$, there is an integer $k$ with $M+1 \leq k \leq M+N$ such that
\[
|z_1^k + \cdots + z_N^k| \geq 1.007 \Big( \frac{N}{4e(M+N)} \Big)^N |z_1|^k. 
\]
\end{thm}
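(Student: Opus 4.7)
The plan is to prove this via the classical second Tur\'an method, extracting the sharp numerical constants $1.007$ and $4e$ by careful optimization. By homogeneity I first normalize so that $|z_1|=1$ and $|z_j|\le 1$ for $j\ge 2$, which reduces the claim to
\[
\max_{M+1\le k\le M+N} |s_k| \;\ge\; 1.007\,\Big(\frac{N}{4e(M+N)}\Big)^N, \qquad s_k:=\sum_{i=1}^N z_i^k.
\]

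The central identity is that for any polynomial $P(w)=\sum_{j=0}^{N-1}c_j w^{j}$ of degree at most $N-1$,
\[
\sum_{j=0}^{N-1} c_j\, s_{M+1+j} \;=\; \sum_{i=1}^{N} z_i^{M+1}\,P(z_i),
\]
so the triangle inequality immediately yields
\[
\max_{M+1\le k\le M+N}|s_k| \;\ge\; \frac{\big|\sum_i z_i^{M+1} P(z_i)\big|}{\sum_j |c_j|}.
\]
The entire proof reduces to exhibiting a polynomial $P$ of degree $\le N-1$ for which the numerator is of size at least (a definite fraction of) $|z_1|^{M+1}$ after controlling the contributions from $i\ge 2$, while the denominator $\sum_j |c_j|$ is as small as possible.

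The naive Lagrange choice $P(w)=\prod_{j\ge 2}(w-z_j)/(z_1-z_j)$ has the correct qualitative shape, but its coefficient $\ell^1$-norm behaves poorly when several $z_j$'s cluster near $z_1$. To get the sharp constants one instead takes $P$ to be extremal for an auxiliary Chebyshev-type problem: bounded on the closed unit disk while as large as possible at $z_1$. The coefficient $\ell^1$-norm of such an extremal $P$ is controlled via a Cauchy integral representation on a circle $|u|=\rho>1$, followed by extraction of Taylor coefficients by residues and optimization over $\rho$. The ratio $(N/(M+N))^N$ emerges from this optimization (the exponent window has length $N$ inside the range up to $M+N$), and Stirling's formula $N!\sim(N/e)^N\sqrt{2\pi N}$ applied to the factorial produced by the coefficient extraction yields the factor $(4e)^{-N}$.

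The main obstacle is preserving the explicit multiplicative constant $1.007>1$. Each inequality used---the triangle inequality, the bound on $\sum_j|c_j|$, Stirling, and the control of contributions from the $z_j$'s with $j\ge 2$---incurs a small multiplicative loss, and these losses must be budgeted so that a strictly positive slack survives in the final product. The sharp accounting is carried out by Kolesnik and Straus; in practice one follows their optimization precisely, as the constant $1.007$ is tuned to their specific choice of parameters rather than arising from a cleaner closed form.
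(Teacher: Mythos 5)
The paper never proves this statement: it is quoted directly from Kolesnik and Straus (with Makai cited for the optimality of the constant $4e$), so there is no internal argument to compare yours against; the only question is whether your outline amounts to a proof, and it does not. Your opening steps are correct and standard: the reduction (after noting one may assume $|z_1|=\max_j|z_j|$, which is slightly more than ``homogeneity'') and the duality identity $\sum_{j=0}^{N-1}c_j s_{M+1+j}=\sum_i z_i^{M+1}P(z_i)$, giving $\max_k|s_k|\ge \bigl|\sum_i z_i^{M+1}P(z_i)\bigr|/\sum_j|c_j|$. But everything that actually produces the bound is missing. You never specify the polynomial $P$, never bound its coefficient norm $\sum_j|c_j|$, and never show the numerator stays bounded below in the problematic case where many $z_j$ cluster at or near $z_1$ --- precisely the difficulty you correctly observe defeats the Lagrange choice. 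Moreover, the replacement you gesture at is incoherent as stated: after normalizing $|z_1|=1$, a polynomial ``bounded on the closed unit disk while as large as possible at $z_1$'' is constrained by the maximum principle to satisfy $|P(z_1)|\le\sup_{|w|\le1}|P(w)|$, so that extremal problem cannot be the right one without substantial modification. The attribution of the factor $(4e)^{-N}$ to ``Stirling applied to a factorial from coefficient extraction'' is also off the mark: in the known arguments the $e$ enters through estimates of the shape $\binom{M+N}{N}\le\bigl(e(M+N)/N\bigr)^N$ and the remaining factor through sup-norm/coefficient inequalities for polynomials on the disk, and the prefactor $1.007$ only emerges from the specific Kolesnik--Straus optimization.

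Indeed, you concede this yourself: ``in practice one follows their optimization precisely.'' That is a citation, not a proof --- which is exactly how the paper treats the result, so citing Kolesnik--Straus (and Makai for sharpness of $4e$) would have been the appropriate and sufficient response. As a blind proof attempt, however, the essential construction and all of the numerical content ($4e$, the window $M+1\le k\le M+N$, the constant $1.007$) are absent, so the argument has a genuine gap.
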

\begin{remark}
	One can verify that the 	expression $\big( \frac{N}{4e(M+N)} \big)^N$ is a decreasing function of $N$. 
\end{remark}

Any improvement on the constant $4e$ in \cref{KS-PowerSum} would lead to a reduction of the exponent $73.2$ in \cref{prop:ZeroDetector}, but $4e$ has been shown by Makai \cite{Makai} to be best possible.

%%%%%%%%%%%%%%%%%%%%%%
\subsection{A Large Derivative}
 
 Denote
\begin{equation}
F(s) := \frac{L'}{L}(s,\chi^*)
\label{BigDerivative-F}
\end{equation}
and $\xi := 1 + r + i\tau$. Using \cref{KS-PowerSum}, the goal of this subsection is to show $F(s)$ has a large high order derivative, which we establish in the following lemma. 
 
 \begin{lem} \label{BigDerivative} Keeping the above notation, if $\epsilon \in (0,1/4)$ and $r_0 < \epsilon/3.8$ then
 \[
\delta(\chi) \cdot \mathbf{1}_{\{ |\tau| < 4 r\}}(\tau) +\Big|   \frac{(-1)^k r^{k+1} }{k!} \cdot  F^{(k)}(\xi)  \Big| \gg \frac{ \exp(-16.6 \cdot \phi r \cL) }{2^{k+1}}
 \]
where $\phi = 1 + \tfrac{4}{\pi} \epsilon + 16\epsilon^2$ and for some integer $k$ satisfying 
 \begin{equation}
 25.0 \cdot \phi r \cL   \leq k \leq 28.8 \cdot \phi r \cL
\label{BigDerivative_kRange}
 \end{equation}
 \end{lem}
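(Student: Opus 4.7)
The plan is to invoke the Kolesnik--Straus power sum theorem (\cref{KS-PowerSum}) applied to the nontrivial zeros of $L(s,\chi^*)$ lying near $1+i\tau$, combined with the explicit formula of \cref{ExplicitFormula_HigherDerivatives}. Multiplying the latter by $r^{k+1}$ yields the key identity
\[
\sum_{\omega} \Big(\frac{r}{\xi-\omega}\Big)^{k+1} = \delta(\chi) \Big(\frac{r}{\xi-1}\Big)^{k+1} + \frac{(-1)^k r^{k+1}}{k!} F^{(k)}(\xi),
\]
where the sum ranges over all zeros of $L(s,\chi^*)$ (trivial and nontrivial, counted with multiplicity). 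The goal is to choose $k$ so that the left-hand side is forced to be large, while simultaneously controlling the contributions from trivial zeros, distant nontrivial zeros, and the pole term.

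Let $\cZ$ denote the multiset of nontrivial zeros with $|1+i\tau-\omega| < 3.8 r$. Since $3.8r<\epsilon<1/4$ under our hypothesis $r<r_0<\epsilon/3.8$, \cref{ZerosInCircle-Convexity} gives $N := |\cZ| \leq 3.8\phi r \cL + O(1)$ (using $\N\kf_\chi\leq Q$ since $\chi(H)=1$, $|\tau|\leq T$, and absorbing lower-order terms into $\Theta n_K$ within $\cL$). The detected zero $\rho$ lies in $\cZ$; since $\Re\rho<1$ forces $|\xi-\rho|\geq r$ and triangle inequality gives $|\xi-\rho|\leq 2r$, we have $|z_\rho| := r/|\xi-\rho| \in [1/2,1)$. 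Pick $M$ just below $25\phi r\cL$ and apply \cref{KS-PowerSum} to $\{z_\omega\}_{\omega\in\cZ}$ with exponent $k+1$ in place of $k$. This produces an integer $k$ with $25.0\phi r\cL\leq k\leq 28.8\phi r\cL$ satisfying
\[
\Big|\sum_{\omega\in\cZ} z_\omega^{k+1}\Big| \geq 1.007 \Big(\frac{N}{4e(M+N)}\Big)^N |z_{\omega_0}|^{k+1} \gg \frac{\exp(-16.6\phi r\cL)}{2^{k+1}},
\]
where $\omega_0\in\cZ$ achieves the maximum modulus (so $|z_{\omega_0}|\geq |z_\rho|\geq 1/2$). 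The constant $16.6$ emerges from evaluating the monotonically decreasing factor $(N/(4e(M+N)))^N$ at its worst-case values $N\approx 3.8\phi r\cL$ and $M+N\approx 28.8\phi r\cL$.

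To conclude, rearrange the key identity as
\[
\sum_{\omega\in\cZ} z_\omega^{k+1} \;=\; \delta(\chi) z_1^{k+1} + \frac{(-1)^k r^{k+1}}{k!} F^{(k)}(\xi) - \sum_{\substack{\text{nontriv.}\\ \omega\notin\cZ}} z_\omega^{k+1} - \sum_{\substack{\text{triv.}\\ \omega}} z_\omega^{k+1},
\]
with $z_1 = r/(r+i\tau)$. The trivial zeros sit at nonpositive integers with multiplicity at most $n_K$, so their total contribution is bounded by $2n_K r^{k+1}$, which is exponentially smaller than the Kolesnik--Straus bound since $r<1/15$ and $k\gg R$. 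For nontrivial zeros outside $\cZ$, decompose them dyadically by $|\xi-\omega|\in[jr,(j+1)r]$ for $j\geq 3$; \cref{ZerosInCircle-Convexity} bounds the annular count by $O(j\phi r\cL)$, while $(r/|\xi-\omega|)^{k+1}\leq j^{-(k+1)}$ decays geometrically in $k$. For the pole term: if $\chi$ is nontrivial, $\delta(\chi)=0$; if $\chi$ is trivial and $|\tau|<4r$, the indicator $\delta(\chi)\mathbf{1}_{|\tau|<4r}(\tau)=1$ dominates the RHS trivially; and if $\chi$ is trivial with $|\tau|\geq 4r$, then $|z_1|\leq 1/\sqrt{17}$, so $|z_1^{k+1}|$ is absorbable. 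Triangle inequality then yields the claimed lower bound on $|\delta(\chi)\mathbf{1}_{|\tau|<4r}(\tau)| + |r^{k+1}F^{(k)}(\xi)/k!|$.

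The main obstacle is the delicate numerical comparison between the Kolesnik--Straus lower bound on $|\sum_{\cZ} z_\omega^{k+1}|$ and the upper bound on the sum over nontrivial zeros just outside $\cZ$. Since a zero $\omega$ with $|1+i\tau-\omega|$ slightly larger than $3.8r$ can still have $|z_\omega|$ as large as $1/2.8$, one must verify that for the specific $k$ delivered by \cref{KS-PowerSum}, the geometric factor $(r/|\xi-\omega|)^{k+1}$ combined with the annular count stays below the $\exp(-16.6\phi r\cL)/2^{k+1}$ threshold. This balance is what forces the constants $25.0$, $28.8$, and $16.6$: the choice of $M$ must push $k$ high enough in its allowed range that the tails decay fast enough, while the radius $3.8r$ of $\cZ$ is calibrated to match the $r_0<\epsilon/3.8$ hypothesis and to keep $|\cZ|$ compatible with the width of that range.
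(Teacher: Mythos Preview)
Your strategy is essentially the paper's: apply the Kolesnik--Straus inequality to the nearby zeros, then show the far zeros and the pole term are negligible. The paper localizes first via \cite[Lemma 1.10]{Weiss} (so only zeros with $|1+i\tau-\rho|<1/2$ appear, plus an analytic remainder $G$ whose $k$-th derivative is bounded by Cauchy), whereas you work directly from \cref{ExplicitFormula_HigherDerivatives} and peel off the trivial zeros by hand. Both routes are fine; yours is arguably cleaner.

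There is, however, a genuine gap in your tail estimate. You write that a zero $\omega\notin\cZ$ can have $|z_\omega|=r/|\xi-\omega|$ ``as large as $1/2.8$'', obtained from the triangle inequality $|\xi-\omega|\geq |1+i\tau-\omega|-r\geq 3.8r-r$. With that bound the argument \emph{fails}: the tail from the first annulus is of order $\phi r\cL\cdot(1/2.8)^{k+1}$, and since $k\geq 25\phi r\cL$ gives $(2/2.8)^{k}\approx\exp(-8.4\,\phi r\cL)$, this swamps the Kolesnik--Straus lower bound $\exp(-16.6\,\phi r\cL)/2^{k+1}$. What rescues the estimate---and what the paper uses---is the Pythagorean refinement
\[
|\xi-\omega|^2=(1+r-\beta)^2+(\tau-\gamma)^2\geq r^2+|1+i\tau-\omega|^2,
\]
valid because $\beta<1$. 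This gives $|z_\omega|\leq 1/\sqrt{A^2+1}$ rather than $1/(A-1)$, and the paper calibrates the cutoff radius $A$ precisely so that $A_1:=\sqrt{A^2+1}$ satisfies $A_1=2\bigl(4e(1+\alpha)/\alpha\bigr)^{\alpha}(1+\eta)$; this is exactly the condition under which $A_1^{-(k+1)}$ is dominated by the power-sum lower bound for every $k$ in the allowed range. Solving with $\alpha=0.15$ yields $A=3.752\ldots$ (not $3.8$), and this optimized value is what produces the constant $16.6$. Your choice $A=3.8$ would give about $16.77$, and without the Pythagorean step the tail is not controlled at all. The hypothesis $r_0<\epsilon/3.8$ merely guarantees $Ar_0<\epsilon$ so that \cref{ZerosInCircle-Convexity} applies; it does not force $A=3.8$.
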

 \begin{proof} By \cite[Lemma 1.10]{Weiss}, 
 \[
 F(s) + \frac{\delta(\chi)}{s-1}  = \sum_{|1+i\tau - \rho| < 1/2 } \frac{1}{s-\rho} + G(s)
 \]
uniformly in the region
\[
|1+i\tau - s| < 1/2,
 \]
where $G(s)$ is analytic and $|G(s)| \ll \cL$ in this region. Differentiating the above formula $k$ times and evaluating at $\xi = 1+r+i\tau$, we deduce
\begin{equation}
\label{F_kDerivatives}
\frac{(-1)^k}{k!} \cdot  F^{(k)}(\xi) + \frac{\delta(\chi)}{(\xi-1)^{k+1}} =   \sum_{|1+i\tau - \rho| < 1/2} \frac{1}{(\xi-\rho)^{k+1}}  +  O(4^k \cL)
\end{equation}
for $\eta > 0$ and $0 < r < r_0 < 1/8$. The error term arises from bounding $G^{(k)}(\xi)$ using Cauchy's integral formula with a circle of radius of $1/4$. 

Let  $A \geq 1$ be a fixed absolute parameter to be specified later. For zeros $\rho$ satisfying $Ar < |1+i\tau - \rho| < 1/2$ in \eqref{F_kDerivatives}, notice
\[
(A^2+1) r^2 < r^2 + |1+i\tau - \rho|^2 \leq |\xi-\rho|^2 \leq (r + |1+i\tau-\rho|)^2 \leq (r+1/2)^2 < 1.
\]
Denoting $A_1 = \sqrt{A^2+1} \geq 2$, it follows by partial summation that 
\begin{align*}
	\sum_{Ar < |1+i\tau-\rho| < 1/2 } \frac{1}{|\xi-\rho|^{k+1}} 
	& \leq \int_{A_1 r}^{1} u^{-k-1} dN_{\chi}(u; \xi ) \\
	& =  (k+1) \int_{A_1 r}^{1} \frac{N_{\chi}(u; \xi)}{u^{k+2}} du + O(\cL)
\end{align*}
where we bounded $N_{\chi}(1; \xi) \ll \cL$ using \cite[Lemma 2.2]{LMO} and recalling $r\cL \geq R \gg 1$. 
By \cref{ZerosInCircle-Classical}, the above is therefore
\begin{equation}
\begin{aligned}
& \leq (k+1)  \int_{A_1 r}^{\infty}  \frac{2u \cL +4 + 4\delta(\chi)}{u^{k+2}} du  + O(\cL )  \\
& \leq \frac{2A_1 r  \cL+ 4 + 4\delta(\chi)}{(A_1 r)^{k+1}} + \int_{A_1 r}^{\infty} \frac{2\cL}{u^{k+1}} du + O(\cL)  \\
& \leq \frac{ 2 \{ 1+ \tfrac{1}{k} \} A_1 r  \cL+ 4 + 4\delta(\chi)}{(A_1 r)^{k+1}} + O(\cL) \\
& \ll \frac{r \cL}{(A_1 r)^{k+1}} .
\end{aligned}
\label{BigDerivative_FarAwayZeros}
\end{equation}
 By considering cases, one may bound the $\delta(\chi)$-term in \eqref{F_kDerivatives} as follows:
\begin{equation}
r^{k+1} \cdot \Big| \frac{ \delta(\chi) }{(\xi-1)^{k+1}} \Big| \leq \delta(\chi) \cdot \mathbf{1}_{\{ |\tau| < Ar \}}(\tau) + \frac{1}{A_1^{k+1}} 
\label{BigDerivative_Principal}
\end{equation}
where $\mathbf{1}$ is an indicator function. Combining \eqref{F_kDerivatives}, \eqref{BigDerivative_FarAwayZeros} and the above yields
\begin{equation}
\begin{aligned}
& \delta(\chi) \mathbf{1}\{ |\tau| < Ar \}  + \Big| \frac{(-1)^k r^{k+1}}{k!} \cdot  F^{(k)}(\xi)  \Big|  \\
& \qquad \geq \Big| \sum_{|1+i\tau - \rho| \leq Ar} \frac{1}{(\xi-\rho)^{k+1}} \Big| \cdot r^{k+1} - O\Big(  \frac{r \cL}{A_1^{k+1}} + (4r)^{k+1} \cL\Big). 
\end{aligned}
\label{BigDerivative_UglyError}
\end{equation}
To lower bound the remaining sum over zeros, we wish to apply \cref{KS-PowerSum}. Denote
\[
N = N_{\chi}(Ar; 1+i\tau) = \#\{ \rho : L(\rho, \chi) = 0, |1+i\tau - \rho| \leq Ar\}.
\]
Let $\epsilon \in (0,1/4)$ be fixed. Provided
\begin{equation}
r_0 < \frac{\epsilon}{A}
\label{BigDerivative_r0_Cond1}
\end{equation}
then by \cref{ZerosInCircle-Convexity} and the definition of $\cL$ in \eqref{def:cL} it follows that
\begin{equation}
N \leq \phi  Ar \cL + 4 + 4\delta(\chi) 
\label{BigDerivative-Nsize}
\end{equation}
as $\Theta$ is sufficiently large (depending on $\epsilon$). 
We require a choice of $M$ which depends on the fixed absolute parameters $\alpha \in (0,1), \epsilon \in (0,1/4)$ and $A \geq 1$, all of which will be specified later. Define
\begin{equation}
M := \Big\lceil  \frac{ \phi Ar \cL + 4 + 4\delta(\chi) }{\alpha} \Big\rceil 
\label{BigDerivative_M}
\end{equation}
so $N \leq \alpha M$ by \eqref{BigDerivative-Nsize}. Thus, from \cref{KS-PowerSum} and  \eqref{DetectedZero},  
\begin{equation}
\Big| \sum_{|1+i\tau-\rho| \leq Ar} \frac{1}{(\xi-\rho)^{k+1}}  \Big| \geq \Big( \frac{\alpha}{4e(1+\alpha)} \Big)^{\alpha M} \frac{1}{(2r)^{k+1} } 
\label{BigDerivative_CloseZeros}
\end{equation}
for some $M + 1 \leq k \leq (1+\alpha)M$.  To simplify the error term in \eqref{BigDerivative_UglyError}, notice $r\cL \ll M \ll k$ so 
\[
(4r)^{k+1} \cL \ll k (4r)^{k} \ll kA_1^{-k}. 
\]
provided
\begin{equation}
r_0 < \frac{1}{4A_1}.
\label{BigDerivative_r0_Cond2}
\end{equation}
Moreover, select $A \geq 1$ so that $A_1 = \sqrt{A^2+1}$ is given by
\begin{equation}
A_1 = 2 \Big( \frac{4e(1+\alpha)}{\alpha} \Big)^{\alpha}  (1+ \eta)
\label{BigDerivative_A}
\end{equation} 
where $\eta \in (0,1)$ is fixed. This choice implies
\[
A_1^{-(k+1)} \leq \Big( \frac{\alpha}{4e(1+\alpha)} \Big)^{\alpha M}   \frac{1}{2^{k+1}(1+\eta)^{k+1}}
\]
since $\alpha k \geq \alpha M$. Incorporating \eqref{BigDerivative_CloseZeros} and the subsequent observations into \eqref{BigDerivative_UglyError} yields
\begin{equation}
\begin{aligned}
& \delta(\chi) \mathbf{1}\{ |\tau| < Ar \}  + \Big| \frac{(-1)^k r^{k+1}}{k!} \cdot  F^{(k)}(\xi)  \Big|  \\
& \qquad\qquad \geq \Big( \frac{\alpha}{4e(1+\alpha)} \Big)^{\phi Ar \cL + 8} \cdot  \frac{1}{2^{k+1}} \Big\{ 1 -  O\Big( \frac{k}{(1+\eta)^{k+1}} \Big) \Big\}   
\end{aligned}
 \label{BigDerivative_Penultimate}
\end{equation}
after bounding $N$ by \eqref{BigDerivative-Nsize} and assuming \eqref{BigDerivative_r0_Cond1} and \eqref{BigDerivative_r0_Cond2} hold.  Since $k \gg M \gg r\cL \gg R$, we may impose $R$ to be sufficiently large, depending on $\eta \in (0,1)$, so that the above error term is negligible. Finally, we select $\alpha = 0.15$ and $\eta = 10^{-4}$ yielding $A = 3.752\dots$ by \eqref{BigDerivative_A}. With these choices,  conditions \eqref{BigDerivative_r0_Cond1} and \eqref{BigDerivative_r0_Cond2} are automatically satisfied as $r_0 < \epsilon/3.8 < 1/16$ by assumption.  The desired result follows after inputting these values into \eqref{BigDerivative_Penultimate} and recalling $M+1 \leq k \leq (1+\alpha)M$.  
\end{proof}
\begin{rem*} Let us motivate our choice of $\alpha = 0.15$. Ultimately, we will wish to maximize the righthand side of \eqref{BigDerivative_Penultimate} when $k$ is large; that is, supposing 
\[
k \approx (1+\alpha)M \approx(1+\alpha) \frac{A}{\alpha}  \cdot  \phi r \cL
\]
by \eqref{BigDerivative_M}. By \eqref{BigDerivative_A}, notice $A \approx \sqrt{4C_{\alpha}^2-1}$ for $\eta \in (0,1)$ sufficiently small and where $C_{\alpha} = \big( \frac{4e(1+\alpha)}{\alpha} \big)^{\alpha}$. Therefore, we select $\alpha \in (0,1)$ which minimizes the quantity 
\[
\frac{ \sqrt{4C_{\alpha}^2-1}}{\alpha} \Big( \log C_{\alpha} +(1+\alpha)  \log 2 \Big)
\]
and this turns out to be roughly $\alpha = 0.15$. 
\end{rem*}

%%%%%%%%%%%%%%%%%
\subsection{Short Sum over Prime Ideals}
Defining $\Lambda_K$ by \eqref{def:vonMangoldt}, it follows by the Euler produt for $L(s,\chi^*)$ that 
\[
F(s) = \frac{L'}{L}(s,\chi^*)  = -\sum_{\kn} \chi^*(\kn)  \Lambda_K(\kn) (\N\kn)^{-s} 
\]
for $\Re\{s\} > 1$. Differentiating the above formula $k$ times, we deduce
\begin{equation}
\frac{(-1)^{k+1} r^{k+1}}{k!} \cdot  F^{(k)}(\xi) =   \sum_{\kn} \frac{ \Lambda_K(\kn) \chi^*(\kn)}{\N\kn^{1+i\tau}}  \cdot r E_k(r \log \N\kn)
\label{F_kDerivative_Lseries}
\end{equation}
 for any integer $k \geq 1$, where $\xi = 1 + r + i\tau$ and 
\begin{equation}
 E_k(u) =  \frac{u^k e^{-u}}{k!}. 
 \label{def:E_k}
\end{equation}
As a preliminary observation, notice from Stirling's formula in the form
\begin{align*}
k^k e^{-k} \sqrt{2\pi k} \leq k! \leq k^k e^{-k} \sqrt{2\pi k} e^{1/12k}
\end{align*}
(see \cite{DLMF}), one can verify
\begin{equation}
E_k(u) \leq 
\begin{cases}
(1+\eta)^{-k} & \text{if $u \leq \dfrac{k}{e(1+\eta)}$}, \\
(1+\eta)^{-k} e^{-\delta u} & \text{if $u \geq \tfrac{2}{1-\delta} \log\big( \tfrac{2(1+\eta)}{1-\delta} \big) k$},
\end{cases}
\label{E_k-Bounds}
\end{equation}
for $k \geq 1, \eta > 0$ and $\delta \in (0,1)$. The goal of this subsection is to bound the infinite sum in \eqref{F_kDerivative_Lseries} by an integral average of short sums over prime ideals. 
\begin{lem} \label{ShortPrimeSum} Keeping the above notation, assume the integer $k$ satisfies \eqref{BigDerivative_kRange}. Then 
\[
\Big| \sum_{\kn} \frac{\chi^*(\kn) \Lambda_K(\kn)}{\N\kn^{1+i\tau}}  \cdot r E_k(r \log \N\kn) \Big| \leq  r^2 \int_y^x \Big| \sum_{\substack{y \leq \N\kp < u}} \frac{\chi^*(\kp) \log \N \kp  }{\N\kp^{1+i\tau}}  \Big| \frac{du}{u} + O\big( e^{-16.8 \phi r \cL} (2.01)^{-k} \big)
 \]
 provided $x,y \geq 1$ satisfy
 \begin{equation}
\log y \leq 2.3 \phi \cL \quad \text{ and } \quad 122\phi \cL \leq  \log x \ll \cL.  
\label{ShortPrimeSum_Range}
 \end{equation}
 \end{lem}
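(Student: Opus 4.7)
From \eqref{F_kDerivative_Lseries}, the LHS equals
\[
\Big|\sum_{\kn} \frac{\Lambda_K(\kn) \chi^*(\kn)}{\N\kn^{1+i\tau}}\, r E_k(r \log \N\kn)\Big|.
\]
My plan is to isolate the prime-ideal contribution in the range $[y,x]$, to absorb the remaining contributions into the target error $O(e^{-16.8\phi r \cL}(2.01)^{-k})$, and to handle the middle piece by Abel summation. First I would discard higher prime powers $\kn = \kp^m$ with $m \geq 2$. Using the uniform bound $r E_k(u) \leq r/\sqrt{2\pi k}$ together with $\sum_{\kp,\, m \geq 2,\, \N\kp^m \geq y} (\log\N\kp)/\N\kp^m \ll y^{-1/2+\epsilon}$ (dominated by the $m=2$ term), and the hypothesis $y \geq e^{\cL}$, the higher-prime-power contribution with $\N\kp^m \in [y,x]$ is far smaller than the target error; the contribution from $\kp^m$ outside $[y,x]$ is treated exactly as in the prime case below.

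Next, I split the prime sum into three ranges. For $\N\kp < y$: since $\log y \leq 2.3\phi\cL$ and $k \geq 25\phi r\cL$, we have $r\log \N\kp \leq 0.092 k$, so the first bound in \eqref{E_k-Bounds} with $1+\eta = 3.98$ (noting $1/(3.98\,e) > 0.092$) gives $E_k(r\log\N\kp) \leq (3.98)^{-k}$. Combined with \cref{lem:PrimeSum} and the factorization $(3.98)^{-k} = (2.01)^{-k}(1.98)^{-k}$, with $(1.98)^{-k} \leq e^{-0.683 k} \leq e^{-17\phi r\cL}$, this contribution is absorbed. For $\N\kp \geq x$: since $\log x \geq 122\phi\cL$ and $k \leq 28.8\phi r \cL$, we have $r\log\N\kp \geq 4.236\,k$, so the second bound in \eqref{E_k-Bounds} with $\delta = 0.14$, $1+\eta = 2.01$ applies (its threshold $\tfrac{2}{0.86}\log(4.02/0.86) \approx 3.59 < 4.236$), yielding $E_k(r\log\N\kp) \leq (2.01)^{-k} e^{-0.14\, r\log\N\kp}$. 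Partial summation against \cref{lem:PrimeSum} gives $\sum_{\N\kp \geq x} (\log\N\kp)/\N\kp^{1+0.14r} \ll \cL\, x^{-0.14r} \leq \cL\, e^{-17\phi r\cL}$, again absorbed.

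Finally, for the middle range, set $S(u) = \sum_{y \leq \N\kp < u} \chi^*(\kp)(\log\N\kp)/\N\kp^{1+i\tau}$ and $f(u) = r E_k(r\log u)$, so Abel summation gives
\[
\sum_{y \leq \N\kp < x} \frac{\chi^*(\kp)\log\N\kp}{\N\kp^{1+i\tau}}\, r E_k(r\log\N\kp) = f(x) S(x) - \int_y^x S(u) f'(u)\, du.
\]
The boundary term $|f(x)S(x)| \ll r\cL\,(2.01)^{-k} e^{-17\phi r\cL}$ is absorbed by the previous $\N\kp \geq x$ analysis together with $|S(x)| \ll \cL$ from \cref{lem:PrimeSum}. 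For the integral, $f'(u) = (r^2/u)[E_{k-1}(r\log u) - E_k(r\log u)]$; since $0 \leq E_j(v) \leq 1/\sqrt{2\pi j}$ for $j \geq 1$, we obtain $|E_{k-1} - E_k| \leq 1$ and hence $|f'(u)| \leq r^2/u$, yielding $\big|\int_y^x S(u) f'(u)\, du\big| \leq r^2 \int_y^x |S(u)|/u\, du$ as required. The main obstacle is calibrating the parameters in \eqref{E_k-Bounds}: the critical inequalities $25 \log(1.98) \approx 17.07 > 16.8$ and $0.14 \cdot 122 = 17.08 > 16.8$, with small slacks absorbing polynomial prefactors like $r\cL$, are precisely what make the numerics fit the range $25 \leq k/(\phi r\cL) \leq 28.8$ and the hypotheses on $y,x$.
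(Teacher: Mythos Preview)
Your treatment of the prime sums in the three ranges $\N\kp<y$, $y\le\N\kp<x$, $\N\kp\ge x$ and the Abel summation for the middle range are correct and essentially match the paper's argument (with slightly different but equally valid parameter choices in \eqref{E_k-Bounds}).

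The genuine gap is in your handling of higher prime powers $\kn=\kp^m$ with $m\ge2$ and $\N\kp^m\in[y,x]$. There you invoke only the uniform bound $E_k(u)\le 1/\sqrt{2\pi k}$, which is merely polynomial in $k$, together with $\sum_{m\ge2,\,\N\kp^m\ge y}(\log\N\kp)/\N\kp^m\ll y^{-1/2+\epsilon}$ and the assumption $y\ge e^{\cL}$. First, $y\ge e^{\cL}$ is not a hypothesis of this lemma (only $\log y\le 2.3\phi\cL$ is assumed; $y$ could even equal $1$). Second, even granting $y\ge e^{\cL}$, your bound is at best of order $e^{-\cL/2}$ times polynomial factors, whereas the target $e^{-16.8\phi r\cL}(2.01)^{-k}$ can be as small as $e^{-36.9\phi r\cL}$ (take $k$ near $28.8\phi r\cL$). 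Thus you would need $\cL/2\gtrsim 36.9\phi r\cL$, i.e.\ $r\lesssim 1/(74\phi)$, which fails for $r$ near the upper end $r_0<\epsilon/3.8$ of the permitted range when $\epsilon$ is not tiny.

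The paper avoids this by treating all prime powers $\kp^m$ ($m\ge2$) at once via the identity
\[
E_k(r\log\N\kn)=(2r)^k\,(\N\kn)^{1/2-r}\,E_k\big(\tfrac12\log\N\kn\big)\le 4^{-k}(\N\kn)^{1/2-r},
\]
using $r<1/8$ and $\sum_j E_j(u)=1$. This extracts the needed exponential factor $4^{-k}$ uniformly in $\N\kn$ and reduces matters to the convergent sum $\sum_{\kp}\sum_{m\ge2}(\log\N\kp)/(\N\kp^m)^{1/2+r}\ll r^{-1}+\log D_K$, giving $|S_4|\ll k\,4^{-k}\ll(3.95)^{-k}$ with no lower bound on $y$ required.
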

\begin{proof} First, divide the sum on the LHS of into four sums:
\begin{align*}
\sum_{\kn} & = \sum_{\N\kp < y} + \sum_{y \leq \N\kp < x} + \sum_{\N\kp \geq x} + \sum_{\kn \text{ not prime} }  \\
& = S_1 + S_2 + S_3 + S_4,
\end{align*}
say. It suffices to show
\begin{align*}
|S_2| & \leq r^2 \int_y^x \Big| \sum_{\substack{y \leq \N\kp < u}} \frac{\chi(\kp) \log \N \kp  }{\N\kp^{1+i\tau}}  \Big| \frac{du}{u}  + O( (3.95)^{-k}), \\
|S_j| &  \ll (3.95)^{-k} \quad \text{ for $j=1,3,4,$}
\end{align*}
because, by \eqref{BigDerivative_kRange},
\begin{align*}
(3.95)^{-k} = e^{-k \log( 3.95/2.01)} (2.01)^{-k} \ll  e^{- 25.0 \log(3.95/2.01) \phi r \cL} (2.01)^{-k} \ll e^{-16.8 \phi r \cL} (2.01)^{-k}.
\end{align*}
Thus, we shall repeatedly use \eqref{E_k-Bounds} with $\eta = 3$ and $\delta = 0.01$. These choices, along with \eqref{BigDerivative_kRange} and \eqref{ShortPrimeSum_Range}, imply that 
\begin{equation}
E_k(r \log \N\kn)  \leq
\begin{cases}
4^{-k} & \text{if $\N\kn \leq y$}, \\
4^{-k} (\N\kn)^{-0.01 r} & \text{if $\N\kn \geq x$}.
\end{cases}
\label{E_k-Bounds_Simple}
\end{equation}
Hence, for $S_1$, observe by \cite[Lemma 1.11]{Weiss} that
\[
|S_1| \leq  r 4^{-k} \sum_{\substack{\N\kp < y}} \frac{\log \N\kp }{\N\kp}  \ll r 4^{-k}  \log( y D_K) \ll  r\cL 4^{-k} \ll k 4^{-k} \ll (3.95)^{-k}
\]
as $k \gg r\cL \gg R$ by \eqref{BigDerivative_kRange} and $R$ is sufficiently large. Similarly, for $S_3$, we use \cite[Lemma 1.11]{Weiss}, \cref{lem:PrimeSum}, and \eqref{ShortPrimeSum_Range} to deduce \\
\begin{equation*}
\begin{aligned}
|S_3| & \leq  r4^{-k} \sum_{\substack{\N\kp \geq x}} \frac{\log \N\kp}{(\N\kp)^{1+ 0.01 r}}  \\
& \leq r 4^{-k} \Big( -\frac{\zeta_K'}{\zeta_K}(1+0.01 r) - \sum_{\N\kp < x} \frac{\log \N\kp}{(\N\kp)^{1+0.01 r} } \Big) \\
& \ll r 4^{-k} \Big( r^{-1}+ \log D_K + \log(D_K x) \Big)    \\
& \ll 4^{-k} + r \cL 4^{-k} \\
& \ll (3.95)^{-k}. 
\end{aligned}
\end{equation*}
For $S_4$, since $\sum_{k=0}^{\infty} E_k(u) = 1$, observe
\[
E_k(r \log \N\kn) = (2r)^k (\N\kn)^{1/2-r} E_k(\tfrac{1}{2}\log \N\kn) \leq 4^{-k} (\N\kn)^{1/2-r}
\]
as $r < r_0 \leq 1/8$ by assumption. Thus, by \cite[Lemma 1.11]{Weiss}, 
\begin{align*}
|S_4| & \leq r \sum_{\kp} \sum_{m \geq 2} \frac{\log \N\kp}{(\N\kp^m)} E_k(r \log \N\kp^m) \\
& \leq 4^{-k} r \sum_{\kp} \sum_{m \geq 2} \frac{\log \N\kp}{(\N\kp^m)^{1/2+r} }  \\
& \ll 4^{-k} r \big( r^{-1} + \log D_K \big) \\
&  \ll  k 4^{-k} \\
& \ll (3.95)^{-k}.
\end{align*}
Finally, for the main term $S_2$, define
\[
W(u) = W_{\chi}(u; \tau) := \sum_{y \leq \N\kp < u} \frac{\chi(\kp) \log \N \kp  }{\N\kp^{1+i\tau}},
\]
so by partial summation
\begin{equation}
S_2    =  r W(x) E_k(r \log x) - r^2 \int_y^x  W(u) E_k'(r \log u) \frac{du}{u}
\label{ShortPrimeSum_S2}
\end{equation}
as $W(y) = 0$.  From \eqref{E_k-Bounds_Simple}, notice
\[
|r W(x) E_k(r \log x)| \ll r 4^{-k} x^{-0.01 r}  \sum_{y \leq \N\kp < x} \frac{\Lambda_K(\kn)}{\N\kn}  \ll 4^{-k}   r \log x \ll k 4^{-k} \ll (3.95)^{-k}
\]
by \cite[Lemma 1.11]{Weiss} and \eqref{ShortPrimeSum_Range}. One can  verify that $|E_k'(u)| = |E_{k-1}(u)  - E_k(u)| \leq E_{k-1}(u) + E_k(u) \leq 1$ from definition \eqref{def:E_k} so the desired estimate  follows from \eqref{ShortPrimeSum_S2}. 
\end{proof}

%%%%%%%%%%%%%
\subsection{Proof of \cref{prop:ZeroDetector}} \label{ZeroDetector_Proof}
 From  \eqref{F_kDerivative_Lseries} and \cref{BigDerivative,ShortPrimeSum}, it follows that 
\begin{equation}
\exp(-36.6 \phi r \cL)
\ll r^2 \int_y^x \Big| \sum_{\substack{y \leq \N\kp < u}} \frac{\chi^*(\kp) \log \N \kp  }{\N\kp^{1+i\tau}}  \Big| \frac{du}{u}  + \delta(\chi) \mathbf{1}_{\{|\tau| < 4r\}}(\tau)
\label{ZeroDetector-1}
\end{equation}
since
\[
\frac{ \exp(-16.6 \phi r \cL) }{2^{k+1}} \gg \exp( - (16.6 \phi r \cL + k \log 2) ) \gg \exp(-36.6 \phi r \cL) 
\]
for $k$ satisfying \eqref{BigDerivative_kRange}. As $y > e^{\cL} \geq \N\kf_{\chi}$, it follows $\chi^*(\kp) = \chi(\kp)$ for $y \leq \N\kp < x$ so we may replace $\chi^*$ with $\chi$ in \eqref{ZeroDetector-1}. Squaring both sides of \eqref{ZeroDetector-1}, replacing $\chi^*$ with $\chi$, and applying Cauchy-Schwarz gives the desired result upon noting $\int_y^x \frac{du}{u} \ll \log(x/y) \ll \cL$ by \eqref{ShortPrimeSum_Range}. 
\hfill \qed
 
%%%%%%%%%%%%%%%%%%%%%%%%%%%%%%%%%%%%%%%%%%%%%%%%%%%%%%%%%%%%%%%%%%%%%%
%%%%%%%%%%%%%%%%%%%%%%%%%%%%%%%%%%%%%%%%%%%%%%%%%%%%%%%%%%%%%%%%%%%%%%
\section{Zero Repulsion: The Deuring-Heilbronn Phenomenon}
\label{sec:DH_proof}

To prove \cref{DH-MainTheorem} and establish Deuring-Heilbronn phenomenon for $L$-functions of ray class characters, we will critically use the following power sum inequality. 

\begin{thm}[Lagarias-Montgomery-Odlyzko] \label{LMO-PowerSum} Let $\epsilon > 0$ and a sequence of complex numbers $\{z_n\}_n$ be given. Let $s_m = \sum_{n=1}^{\infty} z_n^m$ and suppose that $|z_n| \leq |z_1|$ for all $n \geq 1$. Define
\begin{equation}
M := \frac{1}{|z_1|}\sum_{n} |z_n|. 
\label{PowerSum-M}
\end{equation}
Then there exists $m_0$ with $1 \leq m_0 \leq (12+\epsilon) M$ such that
\[
\Re\{ s_{m_0} \} \geq \frac{\epsilon}{48+5\epsilon} |z_1|^{m_0}. 
\]
\end{thm}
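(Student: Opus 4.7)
The plan is a proof by contradiction built around a carefully weighted power sum identity. First normalize by $|z_1|$ so that $|z_1| = 1$; the quantity $M = \sum_n |z_n|$ is unchanged. Suppose the conclusion fails, so that $\Re\{s_m\} < c$ for every $m$ in the range $1 \leq m \leq K$, where $K := \lceil (12+\epsilon) M\rceil$ and $c := \epsilon/(48+5\epsilon)$. The aim is to exhibit real non-negative weights $(a_m)_{m=1}^{K}$ and a polynomial $A(z) := \sum_{m=1}^{K} a_m z^m$ such that the identity
\[
\sum_{m=1}^{K} a_m\, s_m = A(z_1) + \sum_{n \geq 2} A(z_n)
\]
forces a quantitative contradiction with the assumed bound $\sum_m a_m \Re s_m < c \sum_m a_m$.

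Two estimates drive the argument. The first is a \emph{lower bound} of the form $\Re A(z_1) \geq B \sum_m a_m$ with an explicit $B > 0$; this exploits the fact that the weights $(a_m)$ are allowed to depend on $z_1$, so one designs them to mimic a Fej\'er-type kernel concentrated at $\theta_1 = \arg z_1$ (for instance by taking $a_m$ supported on indices $m$ where $\cos(m\theta_1)$ is comfortably positive). The second is a \emph{tail bound} for $n \geq 2$: the non-negativity of the $a_m$ gives $|A(z_n)| \leq A(|z_n|)$, and a refined estimation of $A(|z_n|)$ using summation by parts (together with the concentration of the mass of $(a_m)$ at indices $m \asymp K$) produces decay of the form $|z_n|^{K_0}$ for some $K_0 \asymp K$; summing against $\sum_n |z_n| \leq M$ then yields $\sum_{n \geq 2} |A(z_n)| \leq E(M,K) \sum_m a_m$ with $E(M,K) < B$ once $K$ is large enough relative to $M$.

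Combining these two bounds with the contradictory hypothesis gives $(B - E(M,K)) \sum_m a_m < c\sum_m a_m$, so a contradiction arises as soon as $B - E(M,K) \geq c$. Calibrating the kernel's bandwidth and center angle to optimize this inequality produces exactly the threshold $K \geq (12+\epsilon)M$ together with $c = \epsilon/(48+5\epsilon)$.

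The main obstacle is the sharpness of the numerical constants $12+\epsilon$ and $48+5\epsilon$. A naive tail estimate $|A(z_n)| \leq |z_n| \sum_m a_m$ only yields a contradiction when $M < 2$, so the delicate work is in designing the weights so that $A(|z_n|)$ decays genuinely when $|z_n|$ is bounded away from $1$, while simultaneously keeping $\Re A(z_1)$ large; the factor $12$ captures the length of the summation needed for the Fej\'er-style lower bound to win over the tail, and the $48$ reflects the loss inherent in restricting to non-negative weights (a restriction that is necessary to make the tail bound $|A(z_n)| \leq A(|z_n|)$ available in the first place). Carrying out the quantitative optimization in the style of Lagarias--Montgomery--Odlyzko then completes the proof.
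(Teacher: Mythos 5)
The paper itself does not prove this statement: it is quoted from \cite{LMO} (Theorem 4.2) in the modified form worked out in \cite{Zaman_2015c}, and that proof, like yours, is a contradiction argument with non-negative weights $a_m$ applied to $\sum_m a_m\Re\{s_m\}$. However, the step that carries all the weight in your sketch --- the tail estimate --- is false, and the failure is structural, not a matter of optimizing constants. Nothing in the hypotheses forces $|z_n|<|z_1|$ for $n\ge 2$: after normalizing $|z_1|=1$ there may be up to $\sim M$ terms with $|z_n|=1$ or arbitrarily close to $1$, and for these the factor $|z_n|^{K_0}$ supplies no decay no matter how the mass of $(a_m)$ is concentrated. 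Already one term $z_2$ near $z_1$ gives $|A(z_2)|\approx|A(z_1)|\ge B\sum_m a_m$, so the two bounds ``$\Re\{A(z_1)\}\ge B\sum_m a_m$'' and ``$\sum_{n\ge2}|A(z_n)|\le E\sum_m a_m$ with $E<B$'' can never hold simultaneously; in general the absolute-value tail is of size $M\sum_m a_m$, which swamps $\Re\{A(z_1)\}\le\sum_m a_m$. Moreover such near-unimodular tails are exactly the near-extremal configurations for this theorem (e.g.\ the $N$-th roots of unity other than $1$, for which $\Re\{s_m\}=-1$ for every $m<N$), so they cannot be excluded by appealing to the contradiction hypothesis. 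Hence no calibration of a Fej\'er-type kernel can close the argument along the route you describe: bounding the tail by absolute values is exactly what one cannot afford.

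The genuine proof keeps the tail signed and exploits pointwise positivity instead. Normalizing $|z_1|=1$ and writing $z_1=e^{i\theta_1}$, one takes non-negative weights of the shape $a_m=\bigl(1-\tfrac{m}{K+1}\bigr)\bigl(1+\cos m\theta_1\bigr)$, $1\le m\le K$: the ``rotation'' toward $\theta_1$ is achieved by the factor $1+\cos m\theta_1$ while keeping $a_m\ge0$, and the $n=1$ term then contributes roughly $K/4$. For $n\ge2$ one uses a one-sided lower bound: non-negativity of the Fej\'er kernel survives multiplication of its coefficients by $r^m$ (convolution with the Poisson kernel), so each tail term is $\ge -O(1)$ even when $|z_n|$ is essentially $1$, while trivially it is also $\ge -O\bigl(|z_n|/(1-|z_n|)\bigr)$; combining the two via $\min\{1,\,r/(1-r)\}\ll r$ and summing against $\sum_n|z_n|\le M$ gives a total tail loss $O(M)$ rather than $O(M)\sum_m a_m$. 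Comparing $K/4-O(M)$ with $c\sum_m a_m\asymp cK$ under the hypothesis $\Re\{s_m\}<c$ for $m\le K$ forces $K\lesssim 12M/(1-4c)$, which upon taking $K=\lfloor(12+\epsilon)M\rfloor$ yields a constant of the form $c=\epsilon/(48+O(\epsilon))$, the extra slack in $48+5\epsilon$ absorbing the lower-order terms. The idea missing from your proposal is precisely this replacement of $|A(z_n)|\le A(|z_n|)$ by a kernel-positivity (cancellation) bound for the tail; without it the case of moduli at or near $|z_1|$, which is the whole difficulty, is untouched.
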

\begin{proof} This is a modified version of \cite[Theorem 4.2]{LMO}; see \cite[Theorem 2.3]{Zaman_2015c} for details. 
\end{proof}
We prepare for the application of this result by establishing a few preliminary estimates and then end this section with the proof of \cref{DH-MainTheorem}.

%%%%%%%%%%%%%%%%%%%%
\subsection{Preliminaries} 
 
  \begin{lem} \label{DH-TrivialZeroSum} Let $\chi \pmod{\kq}$ be a Hecke character. For $\sigma \geq 1$ and $t \in \R$, 
\begin{align*}
& \sum_{\omega \, \mathrm{ trivial}} \frac{1}{|\sigma+it-\omega|^2}  \leq  \begin{cases} \big( \frac{1}{2\sigma} + \frac{1}{\sigma^2} \big)  \cdot n_K & \text{if $\chi$ is primitive,} \\
\big( \frac{1}{2\sigma} + \frac{1}{\sigma^2} \big) \cdot n_K + \big(  \frac{1}{2 \sigma} + \frac{2}{\sigma^2 \log 2}  \big) \cdot  \log \N\kq   & \text{unconditionally}, 
\end{cases}
\end{align*}
where the sum is over all trivial zeros $\omega$ of $L(s,\chi)$ counted with multiplicity.
\end{lem}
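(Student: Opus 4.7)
The plan is to partition the trivial zeros of $L(s,\chi)$ into those already present in the primitive $L$-function $L(s,\chi^*)$ (enumerated by \eqref{TrivialZeros}) and those introduced by the Euler-factor cancellation
\[
\frac{L(s,\chi)}{L(s,\chi^*)}=\prod_{\substack{\kp\mid\kq\\ \kp\nmid\kf_\chi}}\bigl(1-\chi^*(\kp)\N\kp^{-s}\bigr),
\]
whose zeros lie on $\Re s=0$. Write $s=\sigma+it$ throughout.

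For the primitive bound, \eqref{TrivialZeros} assigns multiplicity $a(\chi)-\delta(\chi)$ at $\omega=0$, multiplicity $b(\chi)$ at each $\omega=-(2k+1)$ with $k\geq 0$, and multiplicity $a(\chi)$ at each $\omega=-2k$ with $k\geq 1$. Since every trivial $\omega$ is real and non-positive, $|s-\omega|^2\geq(\sigma+|\omega|)^2$, so
\[
\sum_{\omega\text{ trivial}}\frac{m_\omega}{|s-\omega|^2}\leq a(\chi)\sum_{k=0}^{\infty}\frac{1}{(\sigma+2k)^2}+b(\chi)\sum_{k=0}^{\infty}\frac{1}{(\sigma+2k+1)^2}.
\]
Each inner sum is bounded by its leading term (at most $1/\sigma^2$) plus the integral tail $\int_0^{\infty}(\sigma+2u)^{-2}du=1/(2\sigma)$, and the primitive case follows from $a(\chi)+b(\chi)=n_K$.

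For general $\chi$, the extra zeros are $s_{k,\kp}=i(\arg\chi^*(\kp)+2\pi k)/L_\kp$ for $\kp\mid\kq$, $\kp\nmid\kf_\chi$, and $k\in\Z$, where $L_\kp=\log\N\kp$. Summing contributions from a single $\kp$ by Poisson summation (applied to the Lorentzian $(\sigma^2+x^2)^{-1}$ whose Fourier transform is $(\pi/\sigma)e^{-2\pi\sigma|\xi|}$) produces the closed form
\[
\sum_{k\in\Z}\frac{1}{\sigma^2+(t-a_\kp-2\pi k/L_\kp)^2}=\frac{L_\kp}{2\sigma}\cdot\frac{\sinh(\sigma L_\kp)}{\cosh(\sigma L_\kp)-\cos(L_\kp(t-a_\kp))},
\]
which is maximized in $t$ at $(L_\kp/2\sigma)\coth(\sigma L_\kp/2)\leq L_\kp/(2\sigma)+1/\sigma^2$ using the elementary bound $\coth(u)\leq 1+1/u$ (equivalent to $1+2u\leq e^{2u}$).

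Summing over the relevant primes: the linear-in-$L_\kp$ terms contribute at most $\log\N\kq/(2\sigma)$, since $\sum_{\kp\mid\kq}\log\N\kp\leq\log\N\kq$, and the constant-in-$L_\kp$ terms contribute at most $\omega(\kq)/\sigma^2\leq\log\N\kq/(\sigma^2\log 2)$, since $\N\kp\geq 2$. Adding this to the primitive contribution yields the stated unconditional estimate. The main obstacle is bookkeeping the explicit constants: the sharp $\coth$ argument gives a coefficient $1/(\sigma^2\log 2)$, so matching the stated $2/(\sigma^2\log 2)$ presumably requires a slightly coarser accounting (for instance, isolating the $k=0$ zero of each Euler factor and applying an integral comparison separately from the tail), which is the only place the argument needs careful attention to numerical constants.
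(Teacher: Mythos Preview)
Your argument is correct and follows the same decomposition as the paper: split the trivial zeros into those of $L(s,\chi^*)$ (handled via \eqref{TrivialZeros} exactly as you do) and those of the finite Euler product $\prod_{\kp\mid\kq,\,\kp\nmid\kf_\chi}(1-\chi^*(\kp)\N\kp^{-s})$, which lie on the imaginary axis. The only difference lies in bounding the Euler-product contribution. You use Poisson summation to obtain the exact $\coth$ closed form, yielding $\tfrac{L_\kp}{2\sigma}+\tfrac{1}{\sigma^2}$ per prime; the paper instead translates so that the reference zero lies in $[0,2\pi/L_\kp)$, bounds $\sum_{k\in\Z}\leq 2\sum_{k\geq 0}\frac{1}{\sigma^2+(2\pi k/L_\kp)^2}$, and then applies integral comparison, producing $\tfrac{L_\kp}{2\sigma}+\tfrac{2}{\sigma^2}$ per prime. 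So your diagnosis is exactly right: your method gives the sharper coefficient $\tfrac{1}{\sigma^2\log 2}$, and the stated $\tfrac{2}{\sigma^2\log 2}$ is simply an artifact of the paper's coarser bookkeeping. There is no gap in your proof; it is complete and in fact slightly improves the lemma.
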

\begin{proof} Suppose $\chi \pmod{\kq}$ is induced by the primitive character $\chi^* \pmod{\kf_{\chi}}$. Then 
\[
L(s,\chi) = P(s, \chi) L(s,\chi^*) \quad \text{where } \quad P(s, \chi) = \prod_{\substack{ \kp \mid \kq \\ \kp \nmid \kf_{\chi}} } \Big(1 - \frac{\chi^*(\kp)}{\N\kp^{s}} \Big)
\]
for all $s \in \mathbb{C}$. Thus, the trivial zeros of $L(s,\chi)$ are either zeros of the finite Euler product $P(s,\chi)$ or trivial zeros of $L(s,\chi^*)$. We consider each separately. From \eqref{TrivialZeros} and \eqref{GammaFactor_Exponents}, observe
\begin{align*}
 \sum_{\substack{ \omega \, \mathrm{ trivial} \\ L(\omega, \chi^*) = 0} } \frac{1}{|\sigma+it-\omega|^2} 
 & \leq a(\chi) \sum_{k=0}^{\infty}  \frac{1}{(\sigma+2k)^2 + t^2}  + b(\chi) \sum_{k=0}^{\infty} \frac{1}{(\sigma+2k+1)^2 +t^2} \\
 & \leq n_K \sum_{k=0}^{\infty}  \frac{1}{(\sigma+2k)^2} \leq \Big( \frac{1}{2\sigma} + \frac{1}{\sigma^2} \Big) n_K
\end{align*}
Now, if $\chi$ is primitive then $P(s,\chi) \equiv 1$ and hence never vanishes. Otherwise, notice the zeros of each $\kp$-factor in the Euler product of $P(s,\chi)$ are totally imaginary and are given by
\[
 a_{\chi}(\kp) i + \frac{2\pi i \Z}{\log \N\kp}
\]
for some $0 \leq a_{\chi}(\kp)  < 2\pi/\log \N\kp$.  Translating these zeros $\omega \mapsto \omega + it$ amounts to choosing another representative $0 \leq b_{\chi}(\kp; t) < 2\pi /\log\N\kp$. Therefore, 
\begin{align*}
 \sum_{\substack{ \omega \, \mathrm{ trivial} \\ P(\omega, \chi) = 0} } \frac{1}{|\sigma+it-\omega|^2} 
 & \leq   2 \sum_{ \substack{ \kp \mid \kq \\ \kp \nmid \kf_{\chi} } } \sum_{k=0}^{\infty}  \frac{1}{\sigma^2 + (2\pi k/\log \N\kp)^2} \\
 & \leq   2 \sum_{ \substack{ \kp \mid \kq \\ \kp \nmid \kf_{\chi} } }\Big( \frac{1}{\sigma^2} +  \int_0^{\infty} \frac{1}{\sigma^2 + (2\pi x/\log \N\kp)^2 } dx \Big) \\
  & \leq  2 \sum_{ \substack{ \kp \mid \kq \\ \kp \nmid \kf_{\chi} } }\Big(  \frac{\log \N\kp}{4 \sigma} +  \frac{1}{\sigma^2} \Big) \\
  & \leq   \Big(\frac{1}{2 \sigma} +  \frac{2}{\sigma^2 \log 2}  \Big) \log \N\kq,
\end{align*}
as required.  
\end{proof}

 \begin{lem} \label{DH-ZeroSum} Suppose $\psi \pmod{\kq}$ is real and $\chi \pmod{\kq}$ is arbitrary. For $\sigma = \alpha+1$ with $\alpha \geq 1$ and $t \in \R$, 
\begin{align*}
& \sum_{\substack{\rho \\  \zeta_K(\rho) = 0}} \frac{1}{|\sigma-\rho|^2}  + \sum_{\substack{\rho \\  L(\rho, \psi) = 0}}  \frac{1}{|\sigma -\rho|^2}    + \sum_{\substack{\rho \\  L(\rho, \chi) = 0}}  \frac{1}{|\sigma + it -\rho|^2}  + \sum_{\substack{\rho \\  L(\rho, \psi\chi) = 0}}  \frac{1}{|\sigma +it -\rho|^2} \\
& \qquad \leq \frac{1}{\alpha} \cdot \Big[ \frac{1}{2}\log(D_K^3 \N\kq^2 D_{\psi} )  +\Big( \log(\alpha+2) + \frac{2}{\alpha+1} - 2\log \pi \Big) n_K \\
& \qquad\qquad\qquad +  n_K \log( \alpha+2+|t|) + \frac{4}{\alpha} + \frac{4}{\alpha+1} \Big],  
\end{align*}
where the sums are over all non-trivial zeros  of the corresponding $L$-functions. 
\end{lem}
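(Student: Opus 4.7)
The plan is to apply the explicit formula of \cref{ExplicitFormula} to each of the four primitive characters underlying $\zeta_K$, $L(s,\psi)$, $L(s,\chi)$, and $L(s,\psi\chi)$, sum the four resulting identities, bound each term, and convert the resulting bound on $\sum \Re\{(s-\rho)^{-1}\}$ into the desired bound on $\sum |s-\rho|^{-2}$ via the elementary inequality $|s-\rho|^{-2}\leq \alpha^{-1}\Re\{(s-\rho)^{-1}\}$, which holds whenever $\Re\{s-\rho\}\geq\alpha$. Since the ratio $L(s,\chi)/L(s,\chi^*)$ is a finite Euler product whose zeros all lie on $\Re\{s\}=0$, the non-trivial zeros of $L(s,\chi)$ in the critical strip coincide with those of $L(s,\chi^*)$, so I may work throughout with the primitive characters $\chi_1^*,\chi_2^*,\chi_3^*,\chi_4^*\in\{\chi_0,\psi^*,\chi^*,(\psi\chi)^*\}$ evaluated at $s_i\in\{\sigma,\sigma,\sigma+it,\sigma+it\}$ respectively.

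Summing the four instances of \cref{ExplicitFormula} and applying the above inequality, the left-hand side of the lemma is bounded above by $\alpha^{-1}$ times
\[
\sum_{i=1}^4\Big(\tfrac{1}{2}\log D_{\chi_i^*}+\Re\Big\{\tfrac{L'}{L}(s_i,\chi_i^*)\Big\}+\Re\Big\{\tfrac{\gamma_{\chi_i^*}'}{\gamma_{\chi_i^*}}(s_i)\Big\}+\delta(\chi_i)\Re\Big\{\tfrac{1}{s_i-1}+\tfrac{1}{s_i}\Big\}\Big).
\]
Using $D_{\chi^*}=D_K\N\kf_\chi$ together with $\kf_\chi,\kf_{\psi\chi}\mid\kq$, the discriminant sum evaluates to $\tfrac{1}{2}\log(D_K^3\N\kq^2 D_\psi)$. \cref{digamma}, combined with $|s_i|+1\leq \alpha+2+|t|$ (and equal to $\alpha+2$ for the two untwisted evaluations), bounds the gamma-factor sum by $n_K\log(\alpha+2)+n_K\log(\alpha+2+|t|)+\tfrac{2n_K}{\alpha+1}-2n_K\log\pi$. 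The $\delta$-terms contribute at most $\tfrac{4}{\alpha}+\tfrac{4}{\alpha+1}$ since $|\sigma+it-1|\geq\alpha$ and $|\sigma+it|\geq\alpha+1$.

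The key ingredient---and the main technical point---is the positivity $\sum_i\Re\{L'/L(s_i,\chi_i^*)\}\leq 0$. Expanding each logarithmic derivative as a Dirichlet series, this sum equals $-\sum_\kp\sum_{m\geq1}(\log\N\kp)\N\kp^{-m\sigma}\cdot\Re\{c(\kp,m)\}$ where
\[
c(\kp,m)=1+\psi^*(\kp^m)+\chi^*(\kp^m)\N\kp^{-imt}+(\psi\chi)^*(\kp^m)\N\kp^{-imt}.
\]
A short case analysis on local ramification shows $\Re\{c(\kp,m)\}\geq0$. If $\kp$ is unramified in both $\psi$ and $\chi$, then $(\psi\chi)^*(\kp)=\psi^*(\kp)\chi^*(\kp)$, giving the factorization $c(\kp,m)=(1+\psi^*(\kp^m))(1+\chi^*(\kp^m)\N\kp^{-imt})$; reality of $\psi$ forces $\psi^*(\kp^m)\in\{-1,0,1\}$, so both factors have non-negative real part. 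If exactly one of $\psi,\chi$ is ramified at $\kp$, then since an unramified twist preserves the local conductor, the product $\psi\chi$ has the same ramification at $\kp$ as the ramified factor; consequently $(\psi\chi)^*(\kp)=0$ along with the ramified character, leaving $c(\kp,m)$ of the form $1+\chi^*(\kp^m)\N\kp^{-imt}$ or $1+\psi^*(\kp^m)$, each with real part $\geq0$. If both are ramified at $\kp$, then $\psi^*(\kp)=\chi^*(\kp)=0$ and $c(\kp,m)=1+(\psi\chi)^*(\kp^m)\N\kp^{-imt}$, again with $\Re\{c\}\geq0$.

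Combining these estimates gives the claimed inequality. The only non-routine step is the ramification case analysis that establishes positivity; once this is in hand, the remaining pieces are direct applications of the explicit formula and elementary bounds.
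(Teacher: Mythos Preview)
Your proof is correct and follows essentially the same route as the paper: apply \cref{ExplicitFormula} to the four characters, bound the conductor, gamma-factor, and polar terms exactly as you do, and use $\Re\{(s-\rho)^{-1}\}\geq \alpha|s-\rho|^{-2}$ to pass from the explicit formula to the stated inequality.

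The one place where you diverge is in establishing the positivity $\sum_i \Re\{L'/L(s_i,\chi_i^*)\}\leq 0$. You work with the four primitive characters, including $(\psi\chi)^*$, and then must do a ramification case analysis because $(\psi\chi)^*(\kp)$ need not equal $\psi^*(\kp)\chi^*(\kp)$. The paper sidesteps this entirely by using the (possibly imprimitive) product character $\psi^*\chi^*$ for the fourth $L$-function: with that choice the Dirichlet coefficient is literally $(1+\psi^*(\kn))(1+\chi^*(\kn)(\N\kn)^{-it})$, and positivity is immediate from the reality of $\psi^*$. The non-trivial zeros of $L(s,\psi^*\chi^*)$ and $L(s,\psi\chi)$ coincide, and $\N\kf_{(\psi\chi)^*}\leq\N\kq$ in either case, so the resulting inequality is the same. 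Your approach is perfectly valid, but the paper's trick is cleaner and avoids the local analysis altogether.
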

\begin{rem*} If $\psi$ is trivial, notice that the LHS equals 
\[
2  \Big( \sum_{\substack{\rho \\  \zeta_K(\rho) = 0}} \frac{1}{|\sigma-\rho|^2} +  \sum_{\substack{\rho \\  L(\rho, \chi) = 0}}  \frac{1}{|\sigma + it -\rho|^2} \Big). 
\]
This additional factor of $2$ will be useful to us later. 
\end{rem*}

\begin{proof}
Suppose $\psi$ and $\chi$ are induced from the primitive characters $\psi^*$ and $\chi^*$ respectively. From the identity
\[
0 \leq ( 1 + \psi^*(\kn) )(1 + \Re\{ \chi^*(\kn) (\N\kn)^{-it} \} ),
\]
it follows that 
\[
0 \leq - \Re\Big\{ \frac{\zeta_K'}{\zeta_K}(\sigma) +  \frac{L'}{L}(\sigma, \psi^*) + \frac{L'}{L}(\sigma+it, \chi^*) + \frac{L'}{L}(\sigma+it, \psi^*\chi^*)  \Big\}.
\]
Applying \cref{ExplicitFormula,digamma} to each term yields
\begin{equation}
\begin{aligned}
0 & \leq \tfrac{1}{2}\log(D_K^3 \N\kq^2 D_{\psi} ) +  n_K \log(\sigma+1+|t|) + (\log(\sigma+1) + 2\sigma^{-1} - 2\log \pi) n_K \\
& \qquad + \frac{1+\delta(\psi)}{\alpha} + \frac{1+\delta(\psi)}{\alpha+1} + \Re\Big\{ \frac{\delta(\chi) + \delta(\chi \psi) }{\alpha+it} + \frac{\delta(\chi) + \delta(\chi \psi)}{\alpha+1+it} \Big\}  \\
&  - \Re\Big\{ \sum_{\substack{\rho \\  \zeta_K(\rho) = 0}} \frac{1}{\sigma-\rho}  + \sum_{\substack{\rho \\  L(\rho, \psi) = 0}}  \frac{1}{\sigma -\rho}    + \sum_{\substack{\rho \\  L(\rho, \chi) = 0}}  \frac{1}{\sigma + it -\rho}  + \sum_{\substack{\rho \\  L(\rho, \psi\chi) = 0}}  \frac{1}{\sigma +it -\rho} \Big\}\\
\end{aligned}
\label{BoundZeroSum}
\end{equation}
Since $0 < \beta < 1$, we notice
\[
\Re\Big\{ \frac{1}{\sigma+it-\rho} \Big\} = \frac{\alpha+1-\beta}{|\sigma+it-\rho|^2} \geq \frac{\alpha}{|\sigma+it-\rho|^2} 
\]
and
\[
 \Re\Big\{ \frac{1}{\alpha+it} + \frac{1}{\alpha+1+it} \Big\}  \leq  \frac{1}{\alpha} + \frac{1}{\alpha+1}. 
\]
Rearranging \eqref{BoundZeroSum} and employing these observations gives the desired conclusion. 
\end{proof}

%%%%%%%%%%%%%%%%%%%%%%%%%%%%%%%%%%%%%%%%%%%
\subsection{Proof of \cref{DH-MainTheorem}}

We divide the proof according to whether $\psi$ is quadratic or trivial. The arguments in each case are similar but require some minor differences. 

\subsubsection{$\psi$ is quadratic.}
\label{DH-MainTheorem_Quadratic}
Let $m$ be a positive integer, $\alpha \geq 1$ and $\sigma = \alpha+1$.  From the identity
\[
0 \leq (1+\psi^*(\kn) )(1+ \Re\{ \chi^*(\kn) (\N\kn)^{-i\gamma'} \} )
\]
and \cref{ExplicitFormula_HigherDerivatives} with $s = \sigma+i\gamma'$, it follows 
\begin{equation}
\Re\Big\{ \sum_{n=1}^{\infty}  z_n^m \Big\} \leq \frac{1}{\alpha^m} - \frac{1}{(\alpha+1-\beta_1)^{2m}} + \Re\Big\{ \frac{\delta(\chi)+\delta(\psi \chi)}{(\alpha+i\gamma')^{2m}} - \frac{\delta(\chi)+\delta(\psi\chi)}{(\alpha +1+i\gamma' -\beta_1)^{2m}} \Big\} 
\label{DH-Quadratic_PowerSumPrep}
\end{equation}
where $z_n = z_n(\gamma')$ satisfies $|z_1| \geq |z_2| \geq \dots$ and runs over the multisets
\begin{equation}
\begin{aligned}
& \{  (\sigma-\omega)^{-2} :  \omega \text{ is any zero of $\zeta_K(s)$} \}, \\
& \{  (\sigma-\omega)^{-2}  :  \omega \neq \beta_1 \text{ is any zero of $L(s, \psi^*)$} \}, \\
& \{  (\sigma+i\gamma'-\omega)^{-2}  :  \omega \neq \beta_1 \text{ is any zero of $L(s, \chi^*)$} \}, \\
& \{  (\sigma+i\gamma'-\omega)^{-2}  :  \omega \neq \beta_1 \text{ is any zero of $L(s, \psi^*\chi^*)$} \}. \\
\end{aligned}
\label{DH-Quadratic_z_n}
\end{equation}
Note that the multisets includes trivial zeros of the corresponding $L$-functions and $\psi^*\chi^*$ is a Hecke character (not necessarily primitive) modulo the least common multiple of $\kf_{\chi}$ and $\kf_{\psi}$. With this choice, it follows
\begin{equation}
(\alpha+1/2)^{-2} \leq (\alpha+1-\beta')^{-2} \leq |z_1| \leq \alpha^{-2}. 
\label{DH-zProperty}
\end{equation}
The RHS of  \eqref{DH-Quadratic_PowerSumPrep} may be bounded via the observation
\[
\Big| \frac{1}{(\alpha+it)^{2m}} - \frac{1}{(\alpha + it + 1-\beta_1)^{2m}} \Big| \leq \alpha^{-2m} \Big| 1 - \frac{1}{(1 + \frac{1-\beta_1}{\alpha+it})^{2m} } \Big|   \ll \alpha^{-2m-1} m(1-\beta_1),
\]
whence
\begin{equation}
\Re\Big\{ \sum_{n=1}^{\infty}  z_n^m \Big\} \ll \alpha^{-2m-1} m(1-\beta_1). 
\label{DH-PowerSum_RHS}
\end{equation}
On the other hand, by \cref{LMO-PowerSum}, for $\epsilon > 0$, there exists some $m_0 = m_0(\epsilon)$ with $1 \leq m_0 \leq (12+\epsilon)M$ such that
\[
\Re\Big\{ \sum_{n=1}^{\infty} z_n^{m_0} \Big\} \geq \tfrac{\epsilon}{50}  |z_1|^{m_0} \geq \tfrac{\epsilon}{50} (\alpha+1-\beta')^{-2m_0} \geq \tfrac{\epsilon}{50} \alpha^{-2m_0} \exp(-\tfrac{2m_0}{\alpha}(1-\beta') ), 
\]
where $M= |z_1|^{-1} \sum_{n=1}^{\infty} |z_n|$. Comparing with \eqref{DH-PowerSum_RHS} for $m = m_0$, it follows that
\begin{equation}
\exp(-(24+2\epsilon)\tfrac{M}{\alpha} (1-\beta') ) \ll_{\epsilon} \tfrac{M}{\alpha} (1-\beta_1). 
\label{DH-Penultimate}
\end{equation}
Therefore, it  suffices to bound $M/\alpha$ and optimize over $\alpha \geq 1$.

By \eqref{DH-Quadratic_z_n}, the quantity $M$ is a sum involving non-trivial and trivial zeros of certain $L$-functions. For the non-trivial zeros, we employ \cref{DH-ZeroSum} with $D_{\psi} \leq D_K \N\kq$ since $\psi$ is quadratic. For the trivial zeros, apply \cref{DH-TrivialZeroSum}   in the ``primitive" case for $\zeta_K(s), L(s,\psi^*), L(s,\chi^*)$ and  in the ``unconditional" case for $L(s,\psi^*\chi^*)$. Then from \eqref{DH-zProperty}, it follows that
\begin{equation}
\label{DH-M_Quadratic}
\begin{aligned}
\frac{M}{\alpha}  & \leq \frac{(\alpha+1/2)^{2}}{\alpha^2}   \cdot \Big[ 2 \log D_K + \Big( \frac{3}{2} + \frac{\alpha}{2\alpha+2} + \frac{2 \alpha}{(\alpha+1)^2 \log 2} \Big) \log \N\kq \\
& \qquad\qquad +\Big( \log(\alpha+2)  + 2 - 2\log \pi + \frac{4\alpha}{(\alpha+1)^2} \Big) n_K \\
& \qquad\qquad+  n_K \log(\alpha+2+ T)+ \frac{4}{\alpha} + \frac{4}{\alpha+1} \Big],  
\end{aligned}
\end{equation}
for $\alpha \geq 1$. Selecting $\alpha = 18$, inputting the resulting bound into \eqref{DH-Penultimate}, and fixing $\epsilon > 0$ sufficiently small completes the proof for $\psi$ quadratic.  

\begin{rem*}
	The final choice of $\alpha$ was somewhat arbitrary because the coefficients of $\log D_K, \log \N\kq$ and $n_K$ in \eqref{DH-M_Quadratic} cannot be simultaneously minimized. As $\alpha \rightarrow \infty$, it is apparent that the coefficients of $\log D_K$ and $\log \N\kq$ decrease and converge to a minimum  but the coefficient of $n_K$ grows arbitrarily large. Hence, in the interest of having relatively small coefficients of comparable size for all quantities, we chose the value $\alpha = 18$. 
\end{rem*}

\subsubsection{$\psi$ is trivial.}
Now, for $\psi$ trivial, we begin with the identity
\[
0 \leq 1 + \Re\{\chi^*(\kn) (\N\kn)^{-i\gamma'} \}.
\]
This similarly implies
\begin{equation}
\Re\Big\{ \sum_{n=1}^{\infty}  z_n^m \Big\} \leq \frac{1}{\alpha^m} - \frac{1}{(\alpha+1-\beta_1)^{2m}} + \Re\Big\{ \frac{\delta(\chi)}{(\alpha+i\gamma')^{2m}} - \frac{\delta(\chi)}{(\alpha +1+i\gamma' -\beta_1)^{2m}} \Big\} 
\label{DH-AllZeros_PowerSumPrep}
\end{equation}
for a new choice $z_n = z_n(\gamma')$ satisfying $|z_1| \geq |z_2| \geq \dots$ and which runs over the multisets
\begin{equation}
\begin{aligned}
& \{  (\sigma-\omega)^{-2} :  \omega \neq \beta_1 \text{ is any zero of $\zeta_K(s)$} \}, \\
& \{  (\sigma+i\gamma'-\omega)^{-2}  :  \omega \neq \beta_1 \text{ is any zero of $L(s, \chi^*)$} \}. \\
\end{aligned}
\label{DH-AllZeros_z_n_Principal}
\end{equation}
Following the same arguments as before, we may arrive at \eqref{DH-Penultimate} for the new quantity $M = |z_1|^{-1} \sum_{n=1}^{\infty} |z_n|$. To bound the non-trivial zeros arising in $M$, apply \cref{DH-ZeroSum} with $D_{\psi} = D_K$ since $\psi$ is trivial.  For the trivial zeros, apply \cref{DH-TrivialZeroSum}  in the ``primitive" case for both $\zeta_K(s)$ and $L(s,\chi^*)$. It follows from \eqref{DH-zProperty} that
\begin{equation}
\label{DH-M_Principal}
\begin{aligned}
\frac{M}{\alpha}  & \leq \frac{(\alpha+1/2)^{2}}{\alpha^2}   \cdot \Big[ \log D_K + \frac{1}{2} \log \N\kq \\
& \qquad\qquad +\Big( \frac{1}{2} \log(\alpha+2)  + 1 - \log \pi - \frac{1}{\alpha+1}   \Big) n_K \\
& \qquad\qquad+  \frac{1}{2} n_K \log\Big(\alpha+2+T\Big)+ \frac{2}{\alpha} + \frac{2}{\alpha+1} \Big]. 
\end{aligned}
\end{equation}
As with the previous case, selecting $\alpha = 18$, inputting the resulting bound into \eqref{DH-Penultimate}, and fixing $\epsilon > 0$ sufficiently small yields the desired result. \hfill \qed

\bibliographystyle{abbrv}
\bibliography{LFZD}
\end{document}